\DeclareSymbolFont{cyrletters}{OT2}{wncyr}{m}{n}
\DeclareMathSymbol{\Sha}{\mathalpha}{cyrletters}{"58}
\titleformat{\section}[hang]{
	\usefont{T1}{qhv}{b}{n}\selectfont} %
{} 
{0em}
{\hspace{-0.4pt}\Large \thesection\hspace{0.6em}}
\renewcommand{\refname}{References}
\renewcommand{\sectionmark}[1]%
{\markright{{\thesection\ #1}}}
\renewcommand{\subsectionmark}[1]%
{}
\newcommand{\helv}{%
	\fontfamily{phv}\fontseries{b}\fontsize{9}{11}\selectfont}
\definecolor{linkcolour}{rgb}{0,0.2,0.6}
\DeclareDocumentCommand{\newfaktor}{s m O{0.5} m O{-0.5}}{%
	\setbox0=\hbox{\ensuremath{#2}}%
	\setbox1=\hbox{\ensuremath{\diagup}}%
	\setbox2=\hbox{\ensuremath{#4}}%
	\raisebox{#3\ht1}{\usebox0}%
	\mkern-5mu\ifthenelse{\equal{#1}{\BooleanTrue}}%
	{\diagup}%
	{\rotatebox{-44}{\rule[#5\ht2]{0.4pt}{-#5\ht2+#3\ht0+\ht0}}}%
	\mkern-4mu%
	\raisebox{#5\ht2}{\usebox2}%
}
\xapptocmd{\@sect}{\csname #1mark\endcsname{#7}}{}{}
\newtheoremstyle{mystyle}%
{}%
{}%
{\itshape}%
{}%
{\fontsize{10.5pt}{0pt}\scshape }%
{}%
{.5em }%
{}%
\theoremstyle{mystyle}
\newtheorem{lemma}{Lemma}[section]
\newtheorem{theorem}[lemma]{Theorem}
\newtheorem{corollary}[lemma]{Corollary}
\newtheorem{proposition}[lemma]{Proposition}
\newtheorem*{claim}{Claim}
\newtheoremstyle{roman}%
{}%
{}%
{\normalfont}%
{}%
{\scshape}%
{}%
{.5em }%
{}%
\theoremstyle{roman}
\newtheorem{definition}[lemma]{Definition}
\newtheorem{example}[lemma]{Example}
\newtheorem{notat}[lemma]{Notation}
\newtheorem{constr}[lemma]{Construction}
\newtheorem{remark}[lemma]{Remark}
\newtheorem*{thm*}{Theorem}
\renewenvironment{proof}[1][\proofname]{\par
	\pushQED{\qed}%
	\normalfont \topsep0\p@\@plus0\p@\relax
	\trivlist
	\item\relax
	{\itshape
		#1\@addpunct{.}}\hspace\labelsep\ignorespaces
}{%
	\popQED\endtrivlist\@endpefalse
}
\newcommand{\AbVar}{{\mathrm {AbVar}}}
\newcommand{\VHS}{{\mathrm {VHS}}}
\newcommand{\CHM}{{\mathrm {CHM}}}
\newcommand{\HomM}{{\mathrm {HomM}}}
\newcommand{\Rep}{\operatorname{Rep}}
\newcommand{\Et}{{\acute{\mathrm{E}} \mathrm{t}}}
\newcommand{\UVar}{\textrm{-}\mathrm{UVar}}
\newcommand{\Corr}{\operatorname{Corr}}
\newcommand{\SmProjVar}{\mathrm{SmProjVar}}
\newcommand{\diag}{\operatorname{diag}}
\newcounter{saveenumerate}
\newcommand{\enumeratext}[1]{%
	\setcounter{saveenumerate}{\value{enum\romannumeral\the\@enumdepth}}
\end{enumerate}
#1
\begin{enumerate}[i)]
	\setcounter{enum\romannumeral\the\@enumdepth}{\value{saveenumerate}}%
}
\newcommand{\Sh}{\operatorname{Sh}}
\renewcommand{\O}{\operatorname{O}}
\newcommand{\Sp}{\operatorname{Sp}}
\newcommand{\GSp}{\operatorname{GSp}}
\newcommand{\GU}{\operatorname{GU}}
\renewcommand{\U}{\operatorname{U}}
\newcommand{\MM}{\operatorname{M}}
\renewcommand{\M}{\operatorname{M}}
\newcommand{\GMT}{\operatorname{GMT}}
\edef\restoreparindent{\parindent=\the\parindent\relax}
\newcommand{\Anc}{\operatorname{Anc}}
\renewcommand{\ab}{\mathrm{ab}}
\renewcommand{\opp}{\mathrm{op}}
\newcommand{\mot}{{ \textrm{\rm mot}}}
\newcommand{\AV}{{\textrm{\rm AV}}}
\renewcommand{\H}{\mathcal{H}}
\newcommand{\HH}{\mathbb{H}}
\begin{document}
	\title{\vspace{-2cm} Functoriality of motivic lifts of the canonical construction}
	\author{Alex Torzewski}
	\date{}
	\maketitle
	\begin{abstract}
		Let $(G,\mathfrak{X})$ be a Shimura datum and $K$ a neat open compact subgroup of $G(\A_f)$. Under mild hypothesis on $(G,\mathfrak{X})$, the canonical construction associates a variation of Hodge structure on $\Sh_K(G,\mathfrak{X})(\co)$ to a representation of $G$. It is conjectured that this should be of motivic origin. Specifically, there should be a lift of the canonical construction which takes values in relative Chow motives over $\Sh_K(G,\mathfrak{X})$ and is functorial in $(G,\mathfrak{X})$. Using the formalism of mixed Shimura varieties, we show that such a motivic lift exists on the full subcategory of representations of Hodge type $\{(-1,0),(0,-1)\}$. If $(G,\mathfrak{X})$ is equipped with a choice of PEL-datum, Ancona has defined a motivic lift for all representations of $G$. We show that this is independent of the choice of PEL-datum and give criteria for it to be compatible with base change. Additionally, we provide a classification of Shimura data of PEL-type and demonstrate that the canonical construction is applicable in this context.
	\end{abstract}
\label{relmotives}

\section{Introduction}

Let $(G,\mathfrak{X})$ be a Shimura datum. By design, there is a functor $\Rep(G)\to \VHS/\mathfrak{X}$ which assigns a $\q$-valued variation of Hodge structures on $\mathfrak{X}$ to a representation of $G$. For any neat open compact $K\le G(\A_f)$, let $S:=\Sh_K(G,\mathfrak{X})$ denote the corresponding Shimura variety, defined over its reflex field via canonical models. For well-behaved $(G,\mathfrak{X})$, the variations of Hodge structure constructed on $\mathfrak{X}$ descend to $S(\co)$. We call the resulting functor $\Rep(G) \to \VHS/S(\co)$ the \emph{canonical construction} and denote it by $\mu_G^\H$. 

The canonical construction should be of motivic origin. Specifically, there should be a canonical
$\ot$-functor $\mu_G^\mot \colon \Rep(G) \to \CHM/S$ to the category of relative Chow motives over $S$, such that 
	\[
	\begin{tikzcd}[column sep=tiny]
	\Rep(G)\ar{rr}{\mu_G^\mot} \ar{dr}[swap]{\mu_G^\H} && \CHM/S \ar{dl}{H^\bullet_B} \ar[dll, phantom, "\implies" rotate=-155, near start, start anchor={[xshift=-2ex]}, end anchor= north ] \\
	{}& \VHS/S(\co)
	\end{tikzcd}
\]
commutes up to canonical natural isomorphism. Here $H^\bullet_B$ denotes the relative Betti realisation enriched to take values in variations of Hodge structure. The functor $\mu_G^\mot$ should also be well behaved under change of $G$ etc. In particular, the canonical construction should produce variations of Hodge structure which arise from geometry.\todo{}

As an example, for the usual modular curve datum $(\GL_2, \mathcal{H})$, if $V$ denotes the standard representation of $\GL_2$, then $\mu_{G}^\H(V)$ is isomorphic to $H^1_B(\mathcal{E}\to S)^\vee$, where $\mathcal{E}\to S$ is the universal elliptic curve. The obvious choice for $\mu_{G}^\mot(V)$ is then the relative Chow motive $h^1(\mathcal{E}\to S)^\vee$ (in the notation of Theorem \ref{DenMur}).
 
Let $\Rep(G)^\AV$ denote the full subcategory of $\Rep(G)$ whose objects are of Hodge type $\{ (-1,0) ,\allowbreak (0,-1) \}$, i.e.\ for any $(h\colon \S \to G)\in \mathfrak{X}$, the restriction of $V$ to $\S$ is $(z\op \bar z)$-isotypical. Alternatively, the objects of $\Rep(G)^\AV$ are those for which their image under $\mu_G^\H$ is the dual of $H^1_B(A\to S)$ for some abelian variety $A\to S$ (see Lemma \ref{avforavreps}).

The first aim of this paper	is to show that $\mu_G^\mot$ can be defined on $\Rep(G)^\AV$ with the desired properties:
\begin{theorem}\label{intromain}
	Let $(G,\mathfrak{X})$ be a Shimura datum and $K\le G(\A_f)$ be a neat open compact subgroup. Write $S$ for the corresponding Shimura variety $\Sh_K(G,\mathfrak{X})$. There is a canonical $\ot$-functor $\mu_G^\mot\colon \Rep(G)^\AV\to \CHM/S$ for which the following diagram
\[		\begin{tikzcd}[column sep=tiny]
		\Rep(G)^\AV \ar{rr}{\mu_G^\mot} \ar{dr}[swap]{\mu_G^\H} && \CHM/S \ar{dl}{H^\bullet_B} \ar[dll, phantom, "\implies" rotate=-155, near start, start anchor={[xshift=-2ex]}, end anchor= north ] \\
		{}& \VHS/S(\co)
		\end{tikzcd}\]
	commutes up to a canonical natural isomorphism. Now let $f \colon (G',\mathfrak{X}')\to (G,\mathfrak{X})$ be a morphism of Shimura data and $K\le G(\A_f),K'\le G'(\A_f)$ neat open compact subgroups with $f(K')\le K$. Let $E'$ be the reflex field of $S'$, then we also denote by $f$ the induced map $S':=\Sh_{K'}(G,\mathfrak{X}') \to S_{E'}:=\Sh_K(G,\mathfrak{X})_{E'} \to S$ between the corresponding Shimura varieties. Then there is a commutative prism:
	\[	\begin{tikzcd}[column sep=tiny]
		\Rep(G)^\AV \ar{dd}[swap]{f^*} \ar{dr}[swap]{\mu_G^{\mathcal{H}}} \ar{rr}{\mu_G^\mot} && \CHM/S\ar{dd}{f^*} \ar{dl}{H^\bullet_B}\\
		& \VHS/S(\co)  & \\
		\Rep(G')^\AV \ar{dr}[swap]{\mu_{G'}^{\mathcal{H}}}\ar[rr,"\mu_{G'}^\mot" near start] && \CHM/S' \ar{dl}{H_B^\bullet}\\
		& \VHS/S'(\co) \arrow[ uu,crossing over, swap, "f^*" near end, leftarrow] & 
		\end{tikzcd}\]
	where the vertical maps are base change by $f$.
\end{theorem}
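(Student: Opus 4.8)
The plan is to attach to each $V\in\Rep(G)^\AV$ an abelian scheme $A_V\to S$ together with a canonical isomorphism $H^1_B(A_V\to S)\cong\mu_G^\H(V)^\vee$, and then to set $\mu_G^\mot(V):=h^1(A_V\to S)^\vee$ in the notation of Theorem~\ref{DenMur}. Lemma~\ref{avforavreps} already tells us that $\mu_G^\H(V)^\vee$ has the form $H^1_B(A\to S)$ for some abelian scheme $A\to S$; the content of the construction is to produce such an $A$ canonically, over the reflex field $E$ of $S$, and functorially in all the relevant data. For this I would use the formalism of mixed Shimura varieties: to $V$ one associates the mixed Shimura datum $(P_V,\mathfrak Y_V)$ whose underlying group is the semidirect product $V\rtimes G$, the normal factor $V$ being placed in weight $-1$; the hypothesis $V\in\Rep(G)^\AV$ is exactly what makes $(P_V,\mathfrak Y_V)$ a mixed Shimura datum of ``abelian'' type, the requisite polarisation form being furnished by the polarisability of $\mu_G^\H(V)$. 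For a suitable neat level $K_V\le P_V(\A_f)$ lying over $K$, the mixed Shimura variety $\Sh_{K_V}(P_V,\mathfrak Y_V)$ is an abelian scheme over $S$ — neatness of $K$ and $K_V$ ensuring a genuine abelian scheme and not merely an isogeny class — defined over $E$, and the description of the canonical construction via (mixed) Shimura varieties identifies its relative Betti cohomology with $\mu_G^\H(V)^\vee$, which is the comparison isomorphism. The bulk of the work, and the principal obstacle, lies here: checking that $(P_V,\mathfrak Y_V)$ is a legitimate mixed Shimura datum, that the associated variety is a genuine abelian scheme over $S$, and that the Betti comparison is canonical and suitably natural.

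Granting this, $\mu_G^\mot$ is a functor: a morphism $V\to W$ in $\Rep(G)^\AV$ induces a homomorphism $P_V\to P_W$, hence — after matching levels — a morphism of mixed Shimura data, and thus a morphism $A_V\to A_W$ of abelian schemes over $S$; applying $h^1(-)$ and dualising appropriately yields $\mu_G^\mot(V)\to\mu_G^\mot(W)$, with additivity and compatibility with the tensor structure inherited from $\Rep(G)$ read off from the behaviour of $h$ on products of abelian schemes (relative Chow--K\"unneth, Theorem~\ref{DenMur}), exactly as on the Betti side. The upper triangle of the prism commutes because, by construction, $H^\bullet_B\bigl(h^1(A_V\to S)^\vee\bigr)\cong H^1_B(A_V\to S)^\vee\cong\mu_G^\H(V)$, and one checks this composite is natural in $V$; this gives the first assertion of the theorem.

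For the prism, fix $f\colon(G',\mathfrak X')\to(G,\mathfrak X)$ and $V\in\Rep(G)^\AV$, so that $f^*V\in\Rep(G')^\AV$. The geometric heart of the matter is that the formation of $A_V$ commutes with pullback along $f$. The homomorphism $P_{f^*V}=f^*V\rtimes G'\to V\rtimes G=P_V$ covering $G'\to G$ is a morphism of mixed Shimura data, and the base-change behaviour of mixed Shimura varieties — everything happening over $E'$, the reflex field of $S'$, which contains $E$ and through which the map $S'\to S$ of the theorem factors — makes the square
\[
\begin{tikzcd}
A_{f^*V}\ar{r}\ar{d} & A_V\ar{d}\\
S'\ar{r}{f} & S
\end{tikzcd}
\]
Cartesian; equivalently, there is a canonical isomorphism $A_{f^*V}\cong f^*A_V=A_V\times_S S'$ of abelian schemes over $S'$. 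Since the relative Chow--K\"unneth decomposition of Theorem~\ref{DenMur} is compatible with base change for abelian schemes, so that $f^*h^1(A_V\to S)\cong h^1(f^*A_V\to S')$, we obtain a canonical isomorphism
\[
\mu_{G'}^\mot(f^*V)=h^1(A_{f^*V}\to S')^\vee\cong f^*\bigl(h^1(A_V\to S)^\vee\bigr)=f^*\mu_G^\mot(V),
\]
so the face of the prism relating $\mu_G^\mot$, $\mu_{G'}^\mot$ and the two base-change functors $f^*$ commutes up to canonical isomorphism. The face relating $\mu_G^\H$, $\mu_{G'}^\H$ and $f^*$ on variations of Hodge structure commutes by the already-established functoriality of the canonical construction, and the two slanted faces are the triangles of the first part applied to $(G,\mathfrak X)$ and to $(G',\mathfrak X')$.

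What remains is to verify that these isomorphisms of functors are mutually coherent, so that the prism commutes as a whole rather than merely face by face. This is a diagram chase resting on the naturality — in $V$, in $(G,\mathfrak X)$, and under base change — of the single comparison isomorphism $H^1_B(A_V\to S)\cong\mu_G^\H(V)^\vee$ built in the first paragraph, together with the coherences of $h^1(-)$ and of $H^\bullet_B$ supplied by Theorem~\ref{DenMur}. Beyond the construction of $A_V$ itself, the delicate points are therefore this coherence bookkeeping, and the care needed around reflex fields and around the pullback functor $\CHM/S\to\CHM/S'$ along $S'\to S$, which need not be smooth but is still tame enough for pullback of relative Chow motives to be well defined.
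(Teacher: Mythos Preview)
Your proposal is correct and follows essentially the same route as the paper: define $\mu_G^\mot(V)$ as $h^1$ of the mixed Shimura variety attached to $V\rtimes G$, identify its Betti realisation with $\mu_G^\H(V)$ via the group-cohomology/pushforward comparison for the canonical construction on mixed Shimura varieties, and obtain the base-change square from the fibre-product description of mixed Shimura varieties. Two points where the paper is more explicit than your sketch: first, the level $K_V$ is taken of the form $L\rtimes K$ for a $K$-stable $\hat{\z}$-lattice $L\subset V(\A_f)$, and some care is needed to show independence of $L$ and to define $\mu_G^\mot$ on morphisms (one rescales by isogenies and uses the action of $[n]$ on $h^1$); second, the naturality of the comparison $H^1_B(A_V)\cong\mu_G^\H(V)^\vee$ in $V$ and under $f$---which you flag as ``the delicate point''---is established via a Tannakian argument using Wildeshaus's identification of higher direct images in the canonical construction with group cohomology of the unipotent radical, rather than by a direct diagram chase.
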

This is stated more precisely as Theorem \ref{tildenat}. Note that the reflex field of $(G',\mathfrak{X}')$ is allowed to be strictly larger than that of $(G,\mathfrak{X})$. The method of proof is to use the formalism provided by mixed Shimura varieties. Mixed Shimura varieties, as defined by Pink, generalise the traditional definition by allowing for non-reductive algebraic groups. Crucially, objects such as universal elliptic curves fit into this framework, i.e.\ they are mixed Shimura varieties and their structure maps are given by functoriality of mixed Shimura data.

Canonical constructions exist more generally than just the Hodge case. For example, the $\ell$-adic \'etale canonical construction associates a lisse $\ell$-adic sheaf on $S$ (considered as defined over its reflex field via canonical models) to a representation of $G$. The functor $\mu_G^\mot$ should lift every incarnation of the canonical construction. In Section \ref{etalecase}, we show this in the case of the \'etale canonical construction.

For PEL-type Shimura data much stronger results on lifting $\mu_G^\H$ are known due to work of Ancona \cite{Anconapaper}. For Shimura data with a fixed choice of PEL-datum, Ancona has been able to define a functor $\Anc_{G}$ defined on all of $\Rep(G)$ (see Thm.\ \ref{ancfunc}). Unfortunately, it is not directly clear that $\Anc_G$ commutes with pull back via a morphism of Shimura data. Moreover, it is not clear that $\Anc_G$ is independent of the choice of PEL-datum (recall that a Shimura variety may admit multiple distinct PEL-data, see Example \ref{twoPELdata}).

In the latter part of this paper, we show that $\Anc_G$ is independent of the choice of PEL-datum (Lemma \ref{identityadmissible} and Theorem \ref{main}) and in many cases commutes with morphisms of Shimura varieties. More precisely, call a morphism of Shimura data each with chosen (possibly unrelated) PEL-data $f\colon (G',\mathfrak{X}')\to (G,\mathfrak{X})$ \emph{admissible} if
\[ f^*V\textrm{ is a summand of $V'^{\op k}$ for some $k$},    \]
as $G'$-representations, where $V',V$ denote the representations given in the PEL-data on the source and target respectively. The motivation for this definition is that it ensures that we may use functoriality of mixed Shimura data to compare $f^*\Anc_G(V)$ and $\Anc_{G'}(V')$.
\begin{theorem}
Given $f\colon (G',\mathfrak{X}')\to (G,\mathfrak{X})$ an admissible morphism of PEL-type Shimura varieties each with chosen PEL-data, then the following diagram commutes:\todo{}
\[
	\begin{tikzcd}
	\Rep(G) \ar{d}[swap]{f^*} \ar{r}{\Anc_G}\arrow[dr,phantom,  "\implies" rotate=-145]  & \CHM/S \ar{d}{f^*} \\
	\Rep(G') \ar{r}[swap]{\Anc_{G'}} & \CHM/S'
	\end{tikzcd}
\]
up to a specified natural isomorphism. Moreover, there is a prism analogous to that of Theorem \ref{intromain}.
\end{theorem}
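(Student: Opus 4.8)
The plan is to reduce the statement to the behaviour of $\Anc_G$ and $\Anc_{G'}$ on the tautological representation $V$ of $G$ and its tensor constructions, to use functoriality of mixed Shimura varieties (this is exactly what admissibility makes available) to realise both $f^*\Anc_G(V)$ and $\Anc_{G'}(f^*V)$ as the image of an idempotent acting on a common relative Chow motive, and then to match the two idempotents.

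First I would set up the reduction. Both composites $f^*\circ\Anc_G$ and $\Anc_{G'}\circ f^*$ are $\ot$-functors $\Rep(G)\to\CHM/S'$, since pullback of relative Chow motives along $f\colon S'\to S$ and restriction of representations along $f\colon G'\to G$ are $\ot$-functors. As $V$ is a faithful representation of $G$, it $\ot$-generates $\Rep(G)$ (under $\ot$, $\op$, duals and direct summands); and if $f^*V$ is a summand of $(V')^{\op k}$, then $f^*(V^{\ot n})$ is a summand of $\bigl((V')^{\op k}\bigr)^{\ot n}\cong\bigl((V')^{\ot n}\bigr)^{\op k^n}$, so admissibility persists on all tensor powers of $V$. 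Because Ancona's functor $\Anc_G$ is defined by letting the idempotents of $\End_G\!\bigl(\bigoplus_i V^{\ot a_i}\ot(V^\vee)^{\ot b_i}\ot\q(c_i)\bigr)$ act on the corresponding tensor constructions of $h^1(\mathcal{A}/S)^\vee$, it suffices to produce, for each such tensor construction $T$, a canonical isomorphism $f^*\Anc_G(T)\xrightarrow{\ \sim\ }\Anc_{G'}(f^*T)$ intertwining the action of $\End_G(T)$ and compatible with the idempotents cutting out subquotients; these all follow from the argument below, run with the tensor powers of $V$ in place of $V$ itself, and the case $T=V$ already displays the mechanism.

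Next I would construct the comparison on $V$. Let $\mathcal{A}\to S$, $\mathcal{A}'\to S'$ be the universal abelian schemes attached to the two PEL-data, so $\Anc_G(V)=h^1(\mathcal{A}/S)^\vee$ and $\Anc_{G'}(V')=h^1(\mathcal{A}'/S')^\vee$. The abelian scheme $\mathcal{A}\to S$ is the mixed Shimura variety of the mixed Shimura datum over $(G,\mathfrak{X})$ with unipotent part $V$; pulling this datum back along $f$ gives the mixed Shimura datum over $(G',\mathfrak{X}')$ with unipotent part $f^*V$, whose mixed Shimura variety is $\mathcal{A}\times_S S'=:\mathcal{A}_{S'}$, so that (base change of $h^1$, Theorem \ref{DenMur}) $f^*\Anc_G(V)=h^1(\mathcal{A}_{S'}/S')^\vee$. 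By admissibility there is a $G'$-equivariant split injection $j\colon f^*V\hookrightarrow(V')^{\op k}$ with retraction $q$; at every $h'\in\mathfrak{X}'$ both sides carry the Hodge structure restricted along $h'$, so $j$ and $q$ are morphisms of the associated mixed Shimura data. Applying the mixed Shimura variety functor to $j$, $q$ and then $h^1$ exhibits $h^1(\mathcal{A}_{S'}/S')$ as the image of an idempotent $p=h^1(\phi_e)\in\End_{\CHM/S'}\bigl(h^1(\mathcal{A}'/S')^{\op k}\bigr)$, where $\phi_e\in\End\bigl((\mathcal{A}')^{\times k}\bigr)\ot\q$ is induced by the idempotent $e:=j\circ q\in\End_{G'}\bigl((V')^{\op k}\bigr)=\M_k(B')$ via functoriality of mixed Shimura varieties. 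On the other hand $\Anc_{G'}$ is additive and respects idempotents, so $\Anc_{G'}(f^*V)=\im(\Anc_{G'}(e))$ inside $\Anc_{G'}(V')^{\op k}=h^1(\mathcal{A}'/S')^{\vee\,\op k}$.

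Finally I would match the idempotents and assemble the prism. The universal abelian scheme $\mathcal{A}'$ carries precisely the $\OO_{B'}$-action obtained from the $B'$-action on $V'$ by functoriality of mixed Shimura varieties, so $\phi_e$ is the image of $e$ under $\M_k(B')\to\End\bigl((\mathcal{A}')^{\times k}\bigr)\ot\q$; since Ancona's functor is, by construction, compatible with the $B'$-action on $V'\simeq h^1(\mathcal{A}'/S')^\vee$ (part of the content of Theorem \ref{ancfunc}), we get $\Anc_{G'}(e)=h^1(\phi_e)^\vee=p^\vee$, whence $\im(\Anc_{G'}(e))=\im(p^\vee)=h^1(\mathcal{A}_{S'}/S')^\vee=f^*\Anc_G(V)$. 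This yields the sought isomorphism $\theta_V$; one checks that it intertwines the $B$-action and polarisation form and — using that $\Anc$ lifts $\mu^\H$ and that the relative Betti realisation commutes with $f^*$ — that $H^\bullet_B(\theta_V)$ is the functoriality isomorphism $f^*\mu_G^\H(V)\cong\mu_{G'}^\H(f^*V)$ of the Hodge canonical construction. Propagating $\theta$ to all tensor constructions as in the first step gives the natural isomorphism of $\ot$-functors, and the prism follows by checking the coherence of the four chosen isomorphisms on the generator $V$, where each edge is the corresponding known statement for $\mu^\H$. The step I expect to be the real obstacle is the identification $\Anc_{G'}(e)=p^\vee$: it forces one to trace how Ancona's functor acts on the endomorphism algebra $\M_k(B')=\End_{G'}\bigl((V')^{\op k}\bigr)$ of a tensor construction of $V'$ and to recognise this as the geometric $\OO_{B'}$-action produced by functoriality of mixed Shimura varieties — which, when the two PEL-data are not literally compatible, is precisely where the independence results of Lemma \ref{identityadmissible} and Theorem \ref{main} enter, and is the reason the admissibility hypothesis is imposed. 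As a cross-check, once both endomorphisms are known to be of this geometric shape, their equality also follows from the faithfulness of the relative Betti realisation on homomorphisms of abelian schemes up to isogeny (Theorem \ref{DenMur}) together with the agreement of their Betti realisations noted above.
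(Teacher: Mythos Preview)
Your construction of $\theta_V$ coincides with the paper's $\lambda_V$, and your identification $\Anc_{G'}(e)=p^\vee$ for the idempotent $e$ is correct: Ancona's lift on $\End_{G'}(V')$ \emph{is} defined via functoriality of mixed Shimura varieties (Remark following Theorem~\ref{AnconaEnd}), so the two agree on such endomorphisms. But there is a gap in how you establish that $\theta$ is a natural transformation.

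You must show that $\theta$ intertwines all of $\End_G(V^{\otimes n})$, not merely block endomorphisms of $(V')^{\oplus k}$. Ancona's lift of the polarisation $V\otimes V\to\one$ is via Poincar\'e duality and Hard Lefschetz, not via mixed Shimura functoriality; your ``both endomorphisms are of geometric shape'' reasoning does not cover this case, and the phrase ``one checks that it intertwines \ldots\ the polarisation form'' is precisely the step that requires an argument. The paper does not attempt this directly. Instead, naturality is verified by passing to realisation: O'Sullivan's section (Theorem~\ref{O'Sullivan}), which you never invoke, reduces the question from Chow to homological motives, since all morphisms involved are symmetrically distinguished; then faithfulness of $H^\bullet_B$ on $\Hom_{\HomM/S'}(h^i(A_1),h^i(A_2))$ for abelian $A_1,A_2$ (Remark~\ref{faithful}, not Theorem~\ref{DenMur}) reduces it to $\VHS/S'(\co)$, where it becomes the compatibility of Lemma~\ref{commutativity}~\emph{ii)}. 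What you call a ``cross-check'' is in fact the entire mechanism---and without the O'Sullivan step, Betti faithfulness constrains only homological cycles and says nothing at the Chow level.

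A minor side remark: the equality $\End_{G'}((V')^{\oplus k})=\M_k(B')$ presumes $\End_{G'}(V')=B'$, which you have not justified; it is in any case unnecessary, since any $G'$-endomorphism of $V'$ induces a morphism of mixed Shimura data regardless of whether it lies in $B'$.
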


This is made precise in Theorem \ref{main}.

Not all morphisms $f$ are admissible (see Example \ref{admissibilitycounterexample}), but in Corollary \ref{symp/orth} we show that every $f$ for which the source only has factors of symplectic type (see Lemma \ref{PELsatisfySV5}) is admissible. In any case, it is easy to decide if a given morphism is admissible.

One application of results such as the above is in the theory of Euler systems. In this context it is often required to pullback classes lying in the cohomology of Shimura varieties under morphisms of Shimura data. It is also necessary to switch between various cohomology theories. For this reason it is desirable to be able to perform such operations at the motivic level. There has been significant recent progress in this direction due to Lemma's construction of motivic classes on Siegel threefolds \cite{Lemma}. If functoriality results are available Lemma's classes have the potential to yield Euler systems for a multitude of different Shimura varieties (see for example \cite{LSZ}, particularly Section 6).

One other observation from practical applications is that it is desirable to have such results with $F$-coefficients for $F/k$ a number field. For this reason all the following is phrased to allow for coefficients.

Finally, in Section \ref{sec:class} we provide a self-contained classification of the groups arising from PEL-data (see Lemma \ref{PELclass}), which as a consequence demonstrates that PEL-type Shimura data are sufficiently well-behaved to apply the canonical construction. This is something which is well-known, but for which we are not aware of a reference for.

{\bfseries Acknowledgements:} I would like to especially thank David Loeffler for suggesting the research topic and for providing guidance throughout. I am also deeply indebted to Giuseppe Ancona for many helpful discussions and explaining his results to me. I would also like to thank an anonymous referee for their helpful feedback and suggestions.
\section{Relative motives}\label{relmot}
We now recall some background on relative motives.
\begin{notat}
	Assume that $k$ is a field of characteristic zero equipped with a fixed embedding into $\co$. Given a $k$-variety $Z$, we write $Z(\co)$ for its complex points considered as a complex manifold.
	
	In this section, we fix $S$ to be a smooth quasi-projective $k$-scheme. For simplicity, we shall assume that all components of $S$ have the same dimension $d_S$.
\end{notat}
\begin{definition}\label{d:relmot} 
	Following \cite[Sec.\ 1]{DeningerMurre}, fix an adequate equivalence relation $\sim$ on all $k$-varieties and let $X,Y$ be smooth projective $S$-schemes. Assume for simplicity that $X,Y$ are equidimensional of dimensions $d_X,d_Y$ respectively. We define the group of degree $p$ correspondences from $X$ to $Y$, up to equivalence by $\sim$, to be
	\[  \Corr^p_S(X,Y)=A_\sim^{d_X-d_S+p}(X\ti_S Y),  \]
	where $A^d_\sim(-)$ denotes the $\q$-vector space of codimension $d$ cycles up to equivalence by $\sim$. Proceeding as in the classical case we obtain the category $\mathcal{M}_\sim/S$ of \emph{relative motives over $S$ with respect to $\sim$}, whose objects are triples $(X,e,n)$ consisting of a variety $X$, an idempotent $e \in \Corr^0_S(X,X)$ and an integer $n\in \z$ corresponding to Tate twists. The category $\mathcal{M}_\sim/S$ is a $\q$-linear $\ot$-category, with the tensor structure being given by fibre product over $S$.
	
	We are mostly concerned with the case when $\sim$ is taken to be rational equivalence $\sim_\textrm{rat}$, in which case we denote $\mathcal{M}_\sim/S$ by $\CHM/S$, or homological equivalence $\sim_\textrm{hom}$ with respect to singular cohomology (or equivalently any choice of $\ell$-adic cohomology), in which case we denote the resulting category by $\HomM/S$. These categories are referred to as \emph{relative Chow motives over $S$} and \emph{relative homological motives over $S$} respectively\footnote{It may be better to refer to $\HomM/S$ as ``naive homological motives''. This is because, unlike in the case of $S=k$, our homological motives admit non-trivial maps between objects which should be considered to live in different cohomological degrees. As a result, they do not coincide with what we may reasonably expect of ``relative numerical motives''.}. Write $H^i_B(Z(\co),\q)$ for the singular cohomology of a variety $Z/k$. Since homological equivalence is coarser than rational equivalence
	we obtain a forgetful map
	\[ \CHM/S \to \HomM/S,  \]
	which is full.
	
	If $\SmProjVar/S$ denotes the category of (not necessarily irreducible) smooth projective varieties over $S$, then there is a functor $h\colon (\SmProjVar/S)^\opp\to \CHM/S$ which assigns to a variety $X/S$ its motive $(X,\Delta_X, 0)$ where $\Delta_X$ is the diagonal cycle of $X\ti_S X$. The same is also true of homological motives.
\end{definition}
For any adequate equivalence relation, the construction of $\mathcal{M}_\sim/S$ is compatible with change of $S$, i.e.\ given $f\colon S'\to S$, we obtain pullback functors $f^*\colon \mathcal{M}_\sim/S \to \mathcal{M}_\sim/S'$ by base changing triples in the obvious way.
\begin{remark}This construction has been extended to the case when $S\to k$ is quasi-projective but not necessarily smooth by Corti--Hanamura \cite{CortiHanamura}.
\end{remark}

\begin{definition}\label{Fvaluedmotives}
	Let $F/\q$ be a number field. We define $(\CHM/S)_F$ to be the category with the same objects as $\CHM/S$ but for which $\Hom_{(\CHM/S)_F/S}(A,B)=\Hom_{\CHM/S}(A,B)\ot_\q F$. We then define $\CHM_F/S$ to be the pseudo-abelianisation $((\CHM/S)_F)^\natural$ of $(\CHM/S)_F$ and refer to it as the category of \emph{relative Chow motives over $S$ with coefficients in $F$}. We shall frequently use that it is equivalent to think of a Chow motive with coefficients in $F$ as an object $M$ of $((\CHM/S)_F)^\natural$ or as an object of $\CHM/S$ together with an inclusion $F\hookrightarrow \End_{\CHM/S}(M)$ (see \cite[Sec.\ 2]{Deligne} or for more details \cite[Sec.\ 5]{AndreKahn}). We define $\HomM_F/S$ analogously.
\end{definition}
\begin{definition}\label{abmot}
	Let $\AbVar/S$ denote the category of abelian varieties over $S$. We denote by $\CHM^\ab_F/S, \HomM^\ab_F/S$ the smallest rigid linear symmetric tensor subcategories which contain the motives of abelian varieties and are closed under taking subobjects and Tate twists.
\end{definition}
\begin{theorem}\label{O'Sullivan}There is a unique section $\mathcal{I}$ of the projection $\mathcal{N}\colon \CHM_F^\ab/S \to \HomM_F^\ab/S$ which is a linear symmetric tensor functor, commutes with Tate twists and is such that
	\[  h |_{\textrm{\rm AbVar}^\opp} = \mathcal{I}\circ \mathcal{N} \circ h|_{\rm \textrm{\rm AbVar}^\opp}.   \]
\end{theorem}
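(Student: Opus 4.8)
The plan is to adapt O'Sullivan's theorem on the existence of a canonical splitting of the projection from Chow motives of abelian type to homological (or numerical) motives of abelian type over a base field, to the relative setting over $S$. The key input is O'Sullivan's result (building on Kimura--O'Sullivan finite-dimensionality) that over a field the Chow motive of an abelian variety admits a canonical decomposition refining the Künneth decomposition, functorially in the abelian variety, and that this produces a tensor section of $\mathcal{N}$. The relative statement should follow by a descent/spreading-out argument combined with the rigidity of the category $\CHM_F^\ab/S$.

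\emph{First} I would establish \textbf{uniqueness}. Suppose $\mathcal{I}$ and $\mathcal{I}'$ are two such sections. On the subcategory generated by motives $h(A)$ of abelian varieties $A/S$ (before taking subobjects and Tate twists), the condition $h|_{\AbVar^\opp}=\mathcal{I}\circ\mathcal{N}\circ h|_{\AbVar^\opp}=\mathcal{I}'\circ\mathcal{N}\circ h|_{\AbVar^\opp}$ pins down $\mathcal{I}$ and $\mathcal{I}'$ on objects of the form $h(A)$. Since both are tensor functors commuting with Tate twists, they agree on all tensor products and Tate twists of such objects; since $\CHM_F^\ab/S$ is by definition the pseudo-abelianisation (closure under subobjects) of this, and both $\mathcal{I},\mathcal{I}'$ are additive (hence preserve the relevant idempotents — here one uses that $\mathcal{N}$ is full so idempotents upstairs are detected downstairs), they must agree on all objects; a diagram chase using fullness of $\mathcal{N}$ and the fact that a section is faithful gives agreement on morphisms. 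This reduces everything to \textbf{existence}.

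\emph{For existence}, the main step is to produce, for each abelian scheme $\pi\colon A\to S$, a canonical idempotent decomposition of $h(A)$ in $\CHM/S$ lifting the Künneth projectors, compatibly with all morphisms of abelian schemes over $S$ and with base change in $S$. Here I would invoke that relative Chow motives of abelian schemes are Kimura-finite-dimensional (this is known — e.g. via the Corti--Hanamura or Deninger--Murre framework, the motive $h(A)$ decomposes as $\bigoplus h^i(A)$ with $h^i(A)=\wedge^i h^1(A)$, and each is evenly or oddly finite-dimensional), so O'Sullivan's construction of the canonical lift — which is purely formal given finite-dimensionality plus the multiplication-by-$n$ action and the group structure — goes through verbatim over $S$. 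Concretely: the maps $[n]^*$ on $h(A)$ act by $\bigoplus n^i$ on a suitable grading, and intersecting generalized eigenspaces produces canonical projectors; O'Sullivan shows the resulting section is automatically a tensor functor commuting with Tate twists. One then extends $\mathcal{I}$ to all of $\CHM_F^\ab/S$ by taking the value on a subobject $(M,e,n)$ to be $(\mathcal{I}(M), \mathcal{I}(e), n)$, checking $\mathcal{I}(e)$ is still idempotent (it is, as $\mathcal{I}$ is a functor) and that this is well-defined and tensor-compatible.

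\emph{The main obstacle} I anticipate is verifying that O'Sullivan's argument, which is written over a base field and uses the tannakian/Tate-triple structure of motives over a point, transports to the relative setting where $\CHM/S$ is not tannakian and $h^1(A\to S)$ is not "pure of weight $1$" in a naive sense (cf.\ the footnote in the excerpt about $\HomM/S$ not being numerical motives). The safe route is to avoid any tannakian input and instead isolate exactly the formal properties O'Sullivan uses — finite-dimensionality in the sense of Kimura, existence of the $[n]$-action, and the Chow--Künneth decomposition for abelian schemes due to Deninger--Murre (which holds relatively) — and observe these all hold for abelian schemes over $S$. An alternative, if a clean reference is preferred, is to cite that the construction is compatible with the base change functors $f^*$ already noted after Definition~\ref{d:relmot} and with pullback to geometric points, then deduce the relative statement by a limit argument over the generic points of $S$; but I expect the direct approach via finite-dimensionality to be cleaner and is likely what the author intends.
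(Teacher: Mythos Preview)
The paper's argument is much shorter than your outline: it simply cites O'Sullivan, observing that his theorem is already stated in sufficient generality to apply here. O'Sullivan proves that any quotient of a ``Chow theory'' by a proper $\otimes$-ideal admits a unique section with the stated properties; since cycles homologically equivalent to zero form a proper ideal of $\CHM/S$, the relative case is covered directly, with no adaptation needed. In particular your main worry --- transporting the argument from a base field to a base $S$ --- dissolves, because O'Sullivan's framework is not restricted to the field case.

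Your uniqueness sketch, however, has a genuine gap. The condition $h|_{\AbVar^\opp}=\mathcal{I}\circ\mathcal{N}\circ h|_{\AbVar^\opp}$ only determines $\mathcal{I}$ on morphisms of the form $\mathcal{N}(h(f))$ for $f$ a morphism of abelian schemes, i.e.\ on graph correspondences. It says nothing directly about where $\mathcal{I}$ sends an arbitrary homological cycle between $h(A)$ and $h(B)$ --- for instance a K\"unneth projector on $h(A)$, or any correspondence not arising as a graph. Fullness of $\mathcal{N}$ tells you such a cycle has \emph{some} preimage in Chow, and faithfulness of a section gives injectivity of $\mathcal{I}$, but neither forces two different sections to choose the \emph{same} preimage. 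In O'Sullivan's argument, uniqueness is obtained from the characterisation of the image of $\mathcal{I}$ as the \emph{symmetrically distinguished} cycles (there is exactly one such representative in each homological class), which is a substantive result rather than a formal diagram chase; see the Remark immediately following the Theorem in the paper. Your existence sketch is closer to the mark --- Kimura finite-dimensionality is indeed the engine behind O'Sullivan's construction --- but the cleanest route is still to invoke his general statement rather than to reassemble it.
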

\begin{proof}
	This follows from work of O'Sullivan \cite[pf.\ of Thm.\ 6.1.1]{O'Sullivan} (see also \cite[Thm.\ 7.1]{Anconapaper}). More precisely, O'Sullivan checks that any quotient of ``Chow theory'' by a proper ideal has a right inverse which is unique subject to the above conditions. But cycles which are homologically equivalent to zero form a proper ideal within $\CHM/S$. The same reasoning applies for motives with coefficients.
\end{proof}
\begin{remark}\label{O'Sullivanbc}
	Morphisms in the image of $\mathcal{I}$ are \emph{symmetrically distinguished} in the sense of \cite[Def.\ 6.2.1]{O'Sullivan}. O'Sullivan checks that the pullback of a symmetrically distinguished cycle is symmetrically distinguished \cite[Thm.\ iii) p2]{O'Sullivan}. From this, it is easy to see check that, given a morphism $f\colon S' \to S$, there is a natural isomorphism $f^*\circ\mathcal{I}\implies \mathcal{I}\circ f^*$ since both compositions yield a symmetrically distinguished Chow cycle lying over a numerical cycle, but there is only one such cycle (cf.\ \cite[Thm.\ 6.2.5]{O'Sullivan}).
\end{remark}

\begin{theorem}[{\cite[Thm.\ 3.1]{DeningerMurre}}]\label{denmurre}
	Let $A/S$ be an abelian variety of dimension~$n$, then within $\CHM_F/S$ there is a decomposition
	\[ h(A) = \bigoplus_{i=0}^{2n} h^i(A), \]
	such that, if $[n]\colon A \to A$ denotes multiplication by $n$, then
	\[ h([n])= \bigoplus_{i=0}^{2n} n^i\cdot \id_{h^i(A)} .  \]\label{DenMur}
\end{theorem}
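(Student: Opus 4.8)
The plan is to build the decomposition first modulo homological equivalence, where it is essentially formal, and then transport it to $\CHM_F/S$ by means of O'Sullivan's section $\mathcal{I}$ of Theorem \ref{O'Sullivan}. For the homological step, fix an integer $m\ge 2$ and let $\Gamma_{[m]}\in\Corr^0_S(A,A)$ be the graph of the endomorphism $[m]\colon A\to A$, regarded as an endomorphism of $h(A)$ in $\HomM_F/S$; since $[m]$ and $[m']$ commute on $A$, the correspondences $\Gamma_{[m]}$ and $\Gamma_{[m']}$ commute, and in particular $\Gamma_{[m]}$ commutes with $\Gamma_{[n]}$. Under the Betti realisation, $\Gamma_{[m]}$ acts on $H^\bullet_B$ of the fibres of $A\to S$, which in degree $i$ is $\bigwedge^i H^1_B$, as multiplication by $m^i$, hence semisimply with the $2n+1$ pairwise distinct eigenvalues $1,m,\dots,m^{2n}$. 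I would then take Lagrange interpolating polynomials $P_0,\dots,P_{2n}\in\q[T]$ with $P_i(m^j)=\delta_{ij}$ and set $\bar\pi_i:=P_i(\Gamma_{[m]})$. Since homological equivalence is defined with respect to singular cohomology, a morphism (or difference of morphisms) between motives of smooth projective $S$-schemes vanishes in $\HomM_F/S$ as soon as its Betti realisation does; hence the identities $\bar\pi_i\bar\pi_j=\delta_{ij}\bar\pi_i$, $\sum_i\bar\pi_i=\id_{h(A)}$, and ``$\Gamma_{[n]}$ acts on $N_i:=\bar\pi_i h(A)$ as $n^i\,\id_{N_i}$'' hold already in $\HomM_F/S$, being true after realisation. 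Each $N_i$ is a subobject of $h(A)$, hence lies in $\HomM^\ab_F/S$, and realises to $H^i_B$; so $\mathcal{N}(h(A))=\bigoplus_{i=0}^{2n}N_i$, with $\mathcal{N}(h([n]))$ acting as $n^i$ on $N_i$.

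Next I would apply the functor $\mathcal{I}$ of Theorem \ref{O'Sullivan}, which is $F$-linear; thus $\pi_i:=\mathcal{I}(\bar\pi_i)$ are orthogonal idempotents in $\End_{\CHM^\ab_F/S}(\mathcal{I}(\mathcal{N}(h(A))))$ summing to the identity. By the defining property $h|_{\AbVar^\opp}=\mathcal{I}\circ\mathcal{N}\circ h|_{\AbVar^\opp}$, applied to the object $A$, one has $\mathcal{I}(\mathcal{N}(h(A)))=h(A)$, so setting $h^i(A):=\pi_i h(A)=\mathcal{I}(N_i)$ yields a decomposition $h(A)=\bigoplus_{i=0}^{2n}h^i(A)$ in $\CHM_F/S$. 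For the eigenvalue formula, apply the same property to the morphism $[n]$ to get $h([n])=\mathcal{I}(\mathcal{N}(h([n])))$; since $\mathcal{N}(h([n]))$ commutes with each $\bar\pi_i$ and restricts to $n^i\,\id_{N_i}$, we have $\bar\pi_i\circ\mathcal{N}(h([n]))\circ\bar\pi_i=n^i\bar\pi_i$, and applying $\mathcal{I}$ gives $\pi_i\circ h([n])\circ\pi_i=n^i\pi_i$. In other words $h([n])$ restricts to $n^i\,\id$ on $h^i(A)$, and summing over $i$ gives $h([n])=\bigoplus_{i=0}^{2n}n^i\cdot\id_{h^i(A)}$.

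The only substantive ingredient is O'Sullivan's Theorem \ref{O'Sullivan}; the rest is bookkeeping with the realisation functor (to import relations from singular cohomology into $\HomM_F/S$) and with the functoriality and $F$-linearity of $\mathcal{I}$. Consequently the only real obstacle lies in Theorem \ref{O'Sullivan} itself: were one not allowed to cite it, the genuinely hard part would be precisely what Deninger--Murre carry out in \cite{DeningerMurre}, namely lifting the K\"unneth projectors from homological to rational equivalence directly, via the Fourier--Mukai transform on $A\times_S\hat{A}$ together with Beauville's identity $\mathcal{F}\circ\mathcal{F}=(-1)^n[-1]^*$ and the interchange of intersection and Pontryagin products. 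I would also note that the $h^i(A)$ are characterised by the eigenvalue property, so the construction is independent of the auxiliary choice of $m$.
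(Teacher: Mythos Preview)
The paper gives no proof of this statement; it is simply attributed to \cite{DeningerMurre}. So your proposal is to be compared against Deninger--Murre's original argument rather than anything in the paper.

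Your route is genuinely different from theirs. Deninger--Murre construct the projectors $\pi_i$ directly in $\CHM/S$ using the relative Fourier transform on $\CH(A\times_S\hat A)$ and Beauville's decomposition; there is no detour through homological motives and no appeal to a section. You instead produce the projectors in $\HomM_F/S$ by Lagrange interpolation and then lift them via the section $\mathcal{I}$ of Theorem~\ref{O'Sullivan}.

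The main problem is that this is circular. O'Sullivan's proof of Theorem~\ref{O'Sullivan} rests on Kimura--O'Sullivan finite-dimensionality for motives of abelian schemes, and the proof of finite-dimensionality uses precisely the Chow--K\"unneth decomposition $h(A)=\bigoplus_i h^i(A)$ with $h^i(A)\cong\bigwedge^i h^1(A)$ in $\CHM/S$ that you are trying to establish. Your final paragraph almost says this, but frames it as ``if one could not cite O'Sullivan one would have to redo Deninger--Murre''; the situation is sharper: O'Sullivan's theorem \emph{presupposes} Deninger--Murre, so invoking it here assumes the conclusion. Within the paper's internal ordering (Theorem~\ref{O'Sullivan} is stated first) your derivation is formally consistent, but it is not an independent proof.

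There is also a smaller gap in the homological step. Your assertion that a morphism in $\HomM_F/S$ vanishes as soon as its Betti realisation does is false in the relative setting: the paper itself records in Remark~\ref{faithful} that $H^\bullet_B$ is not faithful on $\HomM_F/S$. The identities you need \emph{do} hold in $\HomM_F/S$, but the correct justification is to work in $H^{2n}(A\times_S A,\q)$ via the K\"unneth/Leray decomposition: composition by $\Gamma_{[m]}$ acts there with eigenvalues contained in $\{1,m,\dots,m^{2n}\}$, so $\prod_{i=0}^{2n}(\Gamma_{[m]}-m^i\Delta)$ has vanishing cycle class, which is exactly homological triviality. Checking only the action on the local systems $R^ip_*\q$ sees merely the Leray-degree-zero part of the class and is not sufficient.
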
%
The analogous statement for homological motives also holds but is automatic. The second condition ensures that the decomposition is compatible with change of $A$ and $S$ as well as applying any of the standard realisations. Another consequence is:%
\begin{theorem}[K\"unneth Formula]\label{kunneth} The decomposition $h(A)=\bigoplus_i h^i(A)$ respects the K\"unneth formula, i.e.\
	\[ h^k(A\ti A')=  \bigoplus _{i+j=k} h^i(A)\ot h^j(A') . \]
\end{theorem}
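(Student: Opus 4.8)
The plan is to deduce the Künneth formula for the Deninger--Murre decomposition purely formally from the defining property of $h^i(A)$ together with the fact that the decomposition is multiplicative, i.e.\ that $h(A \times_S A') = h(A) \otimes h(A')$ in $\CHM_F/S$ (this is just $\otimes$-functoriality of $h$, since fibre product over $S$ is the tensor structure). First I would fix the abelian scheme $B := A \times_S A'$ of relative dimension $n + n'$, and observe that under the identification $h(B) = h(A) \otimes h(A')$ the multiplication-by-$m$ map $[m]_B$ corresponds to $[m]_A \otimes [m]_{A'}$. Applying $h$ and using Theorem \ref{DenMur} for $A$, $A'$ and $B$ simultaneously gives two decompositions of $h(B)$ into simultaneous eigen-submotives for the operator $h([m])$; the point is that the submotive $\bigoplus_{i+j=k} h^i(A)\otimes h^j(A')$ lies in the $m^k$-eigenspace of $h([m]_B)$ for \emph{every} $m$.

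The key step is an eigenspace-separation argument. From $h(A) = \bigoplus_i h^i(A)$ and $h(A') = \bigoplus_j h^j(A')$ we get, by distributing the tensor product, a decomposition $h(B) = \bigoplus_k \big(\bigoplus_{i+j=k} h^i(A)\otimes h^j(A')\big)$, and on the $k$-th summand the endomorphism $h([m]_B) = h([m]_A) \otimes h([m]_{A'})$ acts as $m^i \cdot m^j = m^k$ times the identity. On the other hand Theorem \ref{DenMur} applied directly to $B$ gives $h(B) = \bigoplus_k h^k(B)$ with $h([m]_B)$ acting as $m^k$ on $h^k(B)$. I would then invoke the uniqueness built into Theorem \ref{DenMur}: the Deninger--Murre projectors are characterised (after choosing, say, $m = 2$, or by using a single $m$ and a Vandermonde argument, or simply the statement that the decomposition with the stated multiplication property is unique) as the spectral projectors of $h([m]_B)$, so the two decompositions must coincide summand by summand, yielding $h^k(B) = \bigoplus_{i+j=k} h^i(A)\otimes h^j(A')$.

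Concretely, the cleanest way to extract uniqueness is: the idempotent $\pi^k_B \in \End_{\CHM_F/S}(h(B))$ cutting out $h^k(B)$ is a polynomial in $h([m]_B)$ (Lagrange interpolation on the eigenvalues $1, m, m^2, \dots, m^{2(n+n')}$, valid since these are distinct for $m \ge 2$), and likewise $\pi^i_A$, $\pi^j_{A'}$ are the same polynomials in $h([m]_A)$, $h([m]_{A'})$; under $h(B) = h(A)\otimes h(A')$ one checks $\sum_{i+j=k}\pi^i_A \otimes \pi^j_{A'}$ is an idempotent which is a polynomial in $h([m]_A)\otimes h([m]_{A'}) = h([m]_B)$ that picks out exactly the $m^k$-eigenspace, hence equals $\pi^k_B$. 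Taking images gives the claimed decomposition.

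I expect the only genuine subtlety to be purely bookkeeping: making sure that the identification $h(B) \cong h(A) \otimes h(A')$ is compatible with $h([m]_B) \mapsto h([m]_A)\otimes h([m]_{A'})$, which follows from functoriality of $h$ on $\SmProjVar/S$ applied to the square expressing $[m]_B = [m]_A \times_S [m]_{A'}$, and being slightly careful that Theorem \ref{DenMur}'s multiplication condition is stated with the single integer $n = \dim$ but in fact holds for all $m$ (as is standard, and as is needed for the compatibility claims following Theorem \ref{DenMur} anyway). No step should be hard; the content is entirely in organising the eigenspace decomposition and citing the uniqueness in Theorem \ref{DenMur}.
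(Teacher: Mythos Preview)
Your argument is correct and is precisely the standard way to extract the K\"unneth formula from the multiplication-by-$m$ characterisation; the paper itself gives no proof, merely recording the theorem as ``another consequence'' of Theorem~\ref{DenMur}, so your proposal is in fact a faithful expansion of what the paper leaves implicit. The Lagrange-interpolation formulation of the projectors is the clean way to make the uniqueness explicit, and your remark about the overloaded $n$ in the statement of Theorem~\ref{DenMur} (dimension versus multiplication parameter) is well taken---the Deninger--Murre result of course holds for all integers $m$, which is what is needed here.
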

\begin{theorem}[{\cite[Prop.\ 2.2.1]{Kings}}]\label{kings}
	Given an abelian variety $A/S$, the map 
	\[\End(A)^\opp \ot F \to \End_{\CHM_F/S}(h^1(A))\]
	is an isomorphism.
\end{theorem}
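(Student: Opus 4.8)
The plan is to construct the natural map, reduce to $F=\q$, and then identify both sides by means of the Fourier transform for abelian schemes; injectivity is also checked independently as a sanity check.

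\emph{Construction and reduction.} Since $h\colon(\SmProjVar/S)^\opp\to\CHM/S$ is contravariant and monoidal, $f\mapsto h(f)$ is a ring homomorphism $\End_S(A)^\opp\to\End_{\CHM/S}(h(A))$, and because the Deninger--Murre decomposition (Theorem~\ref{denmurre}) is natural in $A$ — this is precisely what the normalisation $h([n])=\bigoplus_i n^i\cdot\id_{h^i(A)}$ buys — each $h(f)$ preserves the summand $h^1(A)$, yielding $\End_S(A)^\opp\to\End_{\CHM/S}(h^1(A))$; tensoring with $F$ gives the map in the statement. As $h^1(A)$ already lies in $\CHM/S\subseteq(\CHM/S)_F$, and endomorphism groups of objects coming from $(\CHM/S)_F$ are unchanged under pseudo-abelianisation, one has $\End_{\CHM_F/S}(h^1(A))=\End_{\CHM/S}(h^1(A))\ot_\q F$, so it suffices to treat $F=\q$. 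Everything here is compatible with base change $S'\to S$.

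\emph{Surjectivity, and the crux.} I would prove the more uniform statement $\Hom_{\CHM/S}(h^1(A),h^1(B))\cong\Hom_S(A,B)\ot\q$ for abelian schemes $A,B/S$. Using rigidity of $\CHM/S$ and the duality $h^1(A)^\vee\cong h^{2g-1}(A)(g)$ (with $g=\dim A/S$), together with the Künneth formula (Theorem~\ref{kunneth}), $\Hom_{\CHM/S}(h^1(A),h^1(B))$ is identified with a single Künneth/Beauville graded piece of $\CH^{g}(A\ti_S B)$. Applying the Fourier transform of Deninger--Murre \cite{DeningerMurre} (see also K\"unnemann) in the $A$-variable converts this piece into the ``$\Pic^0$-type'' part (Beauville degree one in the $A^\vee$-variable) of $\CH^1(A^\vee\ti_S B)$, which by the moduli description of line bundles on an abelian scheme is a $\Hom$-group of abelian schemes up to isogeny, canonically $\Hom_S(A,B)\ot\q$. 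One then checks that this chain of identifications is inverse to the map of the theorem. This last bookkeeping — carefully tracking the Deninger--Murre projectors and the Tate twists through the Fourier isomorphism — is the only substantial step, and is where I expect the real work to be.

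\emph{Injectivity, independently.} Post-composing with the Betti realisation $H^\bullet_B$ (which is compatible with the Deninger--Murre decomposition) turns the map into $\End_S(A)\ot\q\to\End_{\VHS/S(\co)}(H^1_B(A/S))$; restricting along a closed point $s$ of a connected component of $S$, this factors through $\End_S(A)\ot\q\to\End_{k(s)}(A_s)\ot\q\hookrightarrow\End_{\q}(H^1(A_s,\q))$, where the first arrow is injective by rigidity of homomorphisms of abelian schemes over a connected base and the second by faithfulness of the $\End^0$-action on $H^1$ over a field. Hence the map of the theorem is injective. (Alternatively one can avoid the Fourier transform and bootstrap surjectivity from the classical field case $\Hom_{\CHM/k}(h^1A,h^1B)=\Hom(A,B)\ot\q$: apply it at the generic point of a component of $S$, spread the resulting endomorphism out over a dense open $U\subseteq S$, and use the localisation sequence for relative Chow groups with Noetherian induction on $\dim S$ — at the cost of invoking the Corti--Hanamura extension \cite{CortiHanamura} of relative motives over the possibly singular complement.)
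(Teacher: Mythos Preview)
The paper does not give a proof of this statement: it is quoted from \cite[Prop.~2.2.1]{Kings} and used as a black box, so there is no argument in the paper to compare yours against beyond the bare citation.

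That said, your approach is the standard one in the literature and is essentially what one finds in Kings (building on Deninger--Murre and K\"unnemann): reduce to $F=\q$, identify $\Hom_{\CHM/S}(h^1(A),h^1(B))$ with the appropriate bigraded piece of $\CH^g(A\times_S B)$ via rigidity and K\"unneth, and then use the relative Fourier transform to carry this to a piece of $\CH^1$ that one recognises as a $\Hom$-group of abelian schemes up to isogeny. The bookkeeping you flag---tracking the projectors and twists through the Fourier isomorphism---is indeed where the content lies, and is carried out in the cited references. One minor correction of variance: with the paper's contravariant convention for $h$, the natural identification is $\Hom_{\CHM/S}(h^1(A),h^1(B))\cong\Hom_S(B,A)\otimes\q$ rather than $\Hom_S(A,B)\otimes\q$; this is precisely why the statement carries $\End(A)^\opp$, and your general formula should be adjusted accordingly (though of course for the $\End$ case it makes no difference at the level of $\q$-vector spaces). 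Your independent injectivity check via Betti realisation and specialisation to a fibre is correct. The spreading-out alternative you sketch is also viable but, as you note, less clean than the Fourier route.
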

\section{Realisations}
\label{sec:realisations}
\begin{notat}
 Let $S \overset{t}{\to}k$ be a smooth quasi-projective variety over a number field and $\VHS/S(\co)$ denote the category of $\q$-valued variations of Hodge structure on $S(\co)$. For any finite field extension $F/\q$, we may define $\VHS_F/S(\co)$ analogously to Definition \ref{Fvaluedmotives} (note we do not require $F\subset \re$).
\end{notat}
\begin{lemma}For any $S/k$ a smooth quasi-projective variety. There are relative Hodge realisation functors
	\[ H_B^\bullet \colon \HomM_F/S \to \VHS_F/S(\co),  \]
	which send $h(X\overset{p}{\to}S)(i)$ to $\bigoplus_j R^{j}p_*F_{X(\co)}(i)$. These are natural in $S$. \label{Hodge realisation}
\end{lemma}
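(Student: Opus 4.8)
The plan is to construct $H_B^\bullet$ in the usual three stages: first on smooth projective $S$-schemes, then on correspondences so that it descends to $\HomM/S$, and finally on idempotents, Tate twists and $F$-coefficients. For $p\colon X\to S$ smooth projective, each $R^j p_*\q_{X(\co)}$ is a local system on $S(\co)$ by Ehresmann's theorem, and by classical Hodge theory it underlies a polarisable $\q$-VHS; I would set $H_B^\bullet\big(h(X)(i)\big)=\bigoplus_j R^j p_*\q_{X(\co)}(i)$, the Tate twist $(i)$ acting by tensoring with the rank-one VHS $\q(i)$. Monoidality on these objects is furnished by the relative K\"unneth isomorphism $R^\bullet\pi_*\q=\bigoplus R^\bullet p_*\q\ot R^\bullet q_*\q$ for $\pi$ the structure map of $X\ti_S Y$.

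Next I would treat correspondences. For $X,Y$ smooth projective over $S$ with structure maps $p\colon X\to S$ and $q\colon Y\to S$, a morphism $h(X)\to h(Y)$ in $\HomM/S$ is a class $\gamma\in\Corr^0_S(X,Y)=A^{d_X-d_S}_{\sim_{\mathrm{hom}}}(X\ti_S Y)$; writing $\pi\colon X\ti_S Y\to S$, I would attach to $\gamma$ its \emph{relative} cycle class, namely the image of the absolute cycle class $[\gamma]\in H^{2(d_X-d_S)}_B\big((X\ti_S Y)(\co),\q(d_X-d_S)\big)$ under the Leray edge map into $H^0\big(S(\co),R^{2(d_X-d_S)}\pi_*\q(d_X-d_S)\big)$. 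Via the relative K\"unneth decomposition together with fibrewise Poincar\'e duality, this section is the same datum as a family of morphisms of local systems $R^jp_*\q_{X(\co)}\to R^jq_*\q_{Y(\co)}$, equivalently the operation $\beta\mapsto\pr_{Y,*}\big([\gamma]\cup\pr_X^*\beta\big)$ performed on relative cohomology sheaves; these are morphisms of VHS since cycle classes are Hodge classes of the correct type after twisting, and $\pr^*$, $\cup$ and the Gysin maps $\pr_{Y,*}$ all respect Hodge structures. I would then verify --- exactly as in the absolute case, using the relative projection formula and the identity principle --- that composition of correspondences is sent to composition of the induced maps and $\Delta_X$ to the identity, so that $H_B^\bullet$ is a functor on $\HomM/S$. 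Crucially this is well defined on homological (rather than merely Chow) motives: if $\gamma$ is homologically trivial on $X\ti_S Y$ then $[\gamma]=0$, whence its image under the edge map and the induced map both vanish. For a general object $(X,e,n)$ I would then put $H_B^\bullet(X,e,n)=e_*\big(\bigoplus_j R^jp_*\q_{X(\co)}\big)(n)$, the image of the idempotent $e_*$, which exists as $\VHS/S(\co)$ is abelian and in particular pseudo-abelian; and the passage to $F$-coefficients is formal --- tensor all $\Hom$-groups with $F$, extend to $(\HomM/S)_F$ and then to its pseudo-abelianisation $\HomM_F/S$ with values in $\VHS_F/S(\co)$, using $R^jp_*F_{X(\co)}=(R^jp_*\q_{X(\co)})\ot_\q F$ to recover the stated formula.

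Finally, for naturality in $S$: given $f\colon S'\to S$, smooth proper base change furnishes a canonical isomorphism $f^*R^jp_*\q_{X(\co)}\cong R^jp'_*\q_{(X\ti_S S')(\co)}$ of VHS on $S'(\co)$, where $p'$ is the structure map of $X\ti_S S'$, and relative cycle classes are compatible with $f^*$ (both sides being the image of $[\gamma]$ after pullback); together these yield a natural isomorphism $f^*\circ H_B^\bullet\Rightarrow H_B^\bullet\circ f^*$, whose coherence over a tower $S''\to S'\to S$ is automatic. I expect the one genuinely technical ingredient to be the relative correspondence formalism of the middle step --- constructing the relative cycle class, checking its functoriality, and confirming that the induced maps land among morphisms of VHS --- but all of this is standard, so I would keep that verification brief.
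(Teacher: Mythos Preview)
The paper does not prove this lemma in the text; it simply records that the construction is spelt out in \cite[Cor.\ 4.5.7]{thesis}. Your sketch is correct and follows the standard route one expects such a reference to contain---define $H_B^\bullet$ on objects via higher direct images (Ehresmann plus Griffiths for the VHS structure), on correspondences via the relative cycle class obtained from the Leray edge map together with relative K\"unneth and fibrewise Poincar\'e duality, then extend formally to idempotents, Tate twists and $F$-coefficients, with naturality in $S$ coming from proper base change---so there is nothing to compare.
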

This construction is spelt out in \cite[Cor.\ 4.5.7]{thesis}.
\begin{remark}\label{faithful}
	In contrast to the case when $S$ is a field, the relative Hodge realisation functors are not faithful in general. This is due to the presence of non-trivial morphisms between objects which are pure of different weights. In their work, Corti--Hanamura correct this by introducing a realisation functor taking values in a derived category. This is not necessary for our purposes as we shall only require faithfullness for elements of $\Hom_{\HomM_F/S}(h^i(X),h^i(Y))$ with $X,Y$ abelian varieties, which is true of $H^\bullet_B$ (cf.\ \cite[Remark 4.5.8]{thesis}). Note that for abelian varieties $H^i_B(X(\co))=H^\bullet_B(h^i(X))$, by Theorem \ref{DenMur}.
\end{remark}
\begin{remark}\label{xi} In Lemma \ref{Hodge realisation}, by naturality in $S$ we mean that given $f\colon S' \to S$, there is a natural isomorphism $\xi \colon f^*\circ H^\bullet_B \implies H^\bullet_B \circ f^*$. For an object $X\overset{p}{\to}S$ this is given by the proper base change map $f^*R^i p_*F_{X(\co)}\to R^i p_{S',*} f^*F_{ X(\co)}$.
\end{remark}

All the above also holds in the \'etale case, which we now record for use in Section \ref{etalecase}.
\begin{notat}
	Let $\ell$ be any prime and $\lambda$ a prime of $F$ dividing $\ell$. Given a scheme $X$, we write $\Et_{\lambda}/S$ for the category of lisse $F_\lambda$-sheaves on $X$ and $F_{\lambda,X}$ for the constant $F_\lambda$-sheaf on a scheme $X$ with coefficient group $F_\lambda$.\end{notat}
\begin{lemma}\label{etalerealisation}
	There are relative \'etale realisation functors
	\[ H^\bullet_\lambda \colon \HomM_F/S \to \Et_{\lambda}/S,\]
	which send $X\overset{p}{\to} S$ to $\bigoplus_i R^ip_*F_{\lambda,X}$. These are natural in $S$.
\end{lemma}

\section{The canonical construction}
\label{functors}
\begin{notat}
	For an algebraic group $G/\q$ and a field $F$ of characteristic zero let $\Rep_F(G)$ denote the category of representations of $G_F$ over $F$. We shall usually consider an object $V\in \Rep_F(G)$ as a representation $V$ of $G$ over $\q$ together with a map $F\hookrightarrow \End_G(V)$. We also set $\Rep(G):=\Rep_\q(G)$. 
\end{notat}
\begin{notat}
	Throughout $(G,\mathfrak{X})$ will denote a Shimura datum (which we often interchange with $(G,h)$ for $h\in \mathfrak{X}$). We shall always assume that our Shimura data are such that the identity connected component of the centre of $G$ is an almost-direct product of a $\q$-split torus and an $\re$-anisotropic torus. This ensures that all real cocharacters of the centre are in fact defined over $\q$.

	Upon fixing a choice of neat open compact $K\le G(\A_f)$, we denote $\Sh_K(G,\mathfrak{X})$ by $S$, always considered to be defined over the reflex field. We follow a similar convention for $(G',\mathfrak{X}')$ with $K'\le G'(\A_f)$ etc. If $f\colon (G',\mathfrak{X}') \to (G,\mathfrak{X})$ is a morphism of Shimura data for which $f(K')\le K$, then we also denote by $f$ the induced map $f \colon S'\to S$, even when the reflex fields may decrease.
\end{notat}
\begin{constr}[{\cite[Ch.\ 1]{Pink}}]\label{cancon}
	Given an element $(\rho \colon G_F \to \GL(V))\linebreak\in \Rep_F(G)$, we may define a variation of Hodge structure on $S(\co)$ as follows: consider $V$ as $\q$-representation of $G$ together with an action of $F$. Then the underlying local system corresponds to the cover 
	\[ G(\q)\ba ( \mathfrak{X} \ti (G(\A_f)/K) \ti V)\to G(\q)\ba (\mathfrak{X} \ti (G(\A_f)/K)),  \]
	where $g\in G(\q)$ acts by $(h_x,t,v) \mapsto (gh_xg^{-1}, gt,\rho(g)v)$. The stalk at a point $(h_x,t)$ is identified with corresponding fibre $\{ (h_x,t ,v ) \mid v \in V \} \cong V$ and as such may be given the $\q$-Hodge structure defined by the map $\rho\circ h_x \colon \S \to G_\re \to \GL(V_\re)$. This is independent of the choice of representative and can be checked to define a variation of Hodge structure (this uses the almost-direct product condition on the centre of $G$, for more details see \cite[Ch.\ 1]{Pink}).%
	This extends to a functor $\mu_G^\mathcal{H} \colon \Rep_F(G) \to \VHS_F/S(\co)$ referred to as the \emph{canonical construction} (where $\mathcal{H}$ stands for Hodge).
\end{constr}

\begin{constr}\label{canconbc} Let $V\in \Rep_F(G)$ and $f$ be as above. There is a canonical isomorphism of local systems $\kappa_V \colon f^*\mu_G^{\mathcal{H}}(V)\to \mu_{G'}^{\mathcal{H}}(f^*V)$ and this is also a morphism of variations of Hodge structure as it respects the Hodge structure on each fibre. The collection $\kappa := (\kappa_V)_V$ then defines a natural isomorphism:
\[
		\begin{tikzcd}
			\Rep_F(G) \ar{d}[swap]{f^*} \ar{r}{\mu_G^{\mathcal{H}}}\arrow[dr,phantom,  "\implies" rotate=-145]  & \VHS_F/S(\co) \ar{d}{f^*} \\
			\Rep_F(G') \ar{r}[swap]{\mu_{G'}^{\mathcal{H}}} & \VHS_F/S'(\co)
		\end{tikzcd}
\]
\end{constr}
\section{Mixed Shimura varieties}\label{msv}
Mixed Shimura data, as defined by Pink \cite{Pink}, extend the traditional definition to not necessarily reductive algebraic groups. We briefly recall the basic properties of mixed Shimura data, but in the restricted setting of where the homogeneous space is a conjugacy class of morphisms from the Deligne torus (as opposed to a finite cover of such a space) as this is true of all the data we shall consider.

A \emph{mixed Shimura datum} consists of a pair $(P,\tilde{\mathfrak{X}})$ with $P/\q$ a connected algebraic group and a subspace $\tilde{ \mathfrak{X}}\subseteq \Hom(\S_\co,P_\co)$ satisfying the requirements of \cite[Sec.\ 2.1]{Pink}. In the case that $P$ is reductive, i.e.\ that $P$ has trivial unipotent radical, we recover the classical definition of Shimura data, which we shall refer to as the \emph{pure} case.

For any neat open compact $K\le P(\A_f)$, there is an associated mixed Shimura variety $\Sh_K(P,\tilde{ \mathfrak{X}})$, which is algebraic over its reflex field \cite[Thm.\ 11.18]{Pink}. A morphism of mixed Shimura data $f\colon (P',\tilde{ \mathfrak{X}}')\to (P,\tilde{ \mathfrak{X}})$ is a map $P'\to P$ for which $f(\tilde{ \mathfrak{X}}')\subseteq \tilde{ \mathfrak{X}}$. Pairs of neat open compact subgroups $K\le P(\A_f)$ and $K' \le P'(\A_f)$ with $f(K')\le K$ give rise to algebraic maps $\Sh_{K'}(P',\tilde{ \mathfrak{X}}')\to \Sh_K(P,\tilde{ \mathfrak{X}})$ \cite[Sec.\ 3.4]{Pink}. 

Any mixed Shimura datum $(P,\tilde{ \mathfrak{X}})$ admits a map to the pure Shimura datum $(G, \mathfrak{X})$ where $G$ is the quotient of $P$ by its unipotent radical and $\mathfrak{X}$ is given by postcomposing elements of $ \tilde{ \mathfrak{X}}$ with $\pi\colon P \to G$ (cf.\ \cite[Prop.\ 2.9]{Pink}).

We shall always assume that our mixed Shimura varieties satisfy the stronger condition that: the centre of $G=P/R_u(P)$ is an almost-direct product of a $\q$-split torus and a torus which is $\re$-anisotropic (so the weight cocharacter $\pi \circ h \circ w\colon \G_{m,\re}\to G_\re$ is rational for $h\in \tilde{ \mathfrak{X}}$). These ensure that there is a canonical construction for mixed Shimura varieties associating variations of mixed Hodge structure on $\Sh_K(P,\tilde{ \mathfrak{X}})$ to representations of $\Rep(P)$ (see \cite[Sec.\ 1.18]{Pink}).

Universal abelian varieties can be seen as instances of mixed Shimura varieties (see Example \ref{univabmixedshim}). In this section, we shall observe that the theory of mixed Shimura varieties automates the creation of certain abelian varieties over pure Shimura varieties in a functorial way.

\begin{definition}
	Let $(G,\mathfrak{X})$ be a (pure) Shimura datum and $V\in \Rep_F(G)$. We consider $V$ as a $\q$-representation together with an $F$-structure $F \hookrightarrow \End_G(V)$. For any choice of $h_x \in \mathfrak{X}$, $V\ot_{\q} \co$ decomposes as a direct sum of one dimensional $\co$-subrepesentations upon each of which $z \in \S(\re)=\co^\ti$ acts as multiplication by $z^{-p_i}\bar z^{-q_i}$ for some $p_i,q_i$. We say that $V$ has \emph{Hodge type} given by set $\{(p_1,q_1),(p_2,q_2),...,(p_n,q_n)\}$ of $(p_i,q_i)$ occurring in the above decomposition. Since different choices of $h_x$ define isomorphic $\re$-Hodge structures, this is independent of the choice of $h_x$. 

	The Hodge type of a representation $V\in \Rep(G)$ coincides with the Hodge type of $\mu_G^\mathcal{H}(V)$ as a variation of Hodge structure on $S(\co)$.
\end{definition}

\begin{notat}\label{crapnotation}
	Let $\Rep_F(G)^\AV$ denote the full subcategory of $\Rep_F(G)$ whose objects have Hodge type contained in $\{(-1,0),(0,-1)\}$.
	
	Given $V\in \Rep_F(G)^\AV$, considering $V$ as a representation over $\q$, we may form the semi-direct product $V\rti G$ as an algebraic group over $\q$. Let $p \colon V\rti G\to G$ denote the projection map and $\tilde{ \mathfrak{X}}$ consist of the elements $t \in \Hom(\S_\co, (V\rti G)_\co)$ for which $p \circ t \in \mathfrak{X}_\co$.
\end{notat}
\begin{lemma}\label{VHS_F/SShimuradatum}
	Let $(G,\mathfrak{X})$ be a (pure) Shimura datum $V\in \Rep_F(G)^\AV$, then $(V\rti G,\tilde{\mathfrak{X}})$ is a mixed Shimura datum. 
\end{lemma}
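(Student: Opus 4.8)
The plan is to verify directly that $(V \rtimes G, \tilde{\mathfrak X})$ satisfies the axioms of a mixed Shimura datum as listed in \cite[Sec.\ 2.1]{Pink}. The key structural point is that $V \rtimes G$ has unipotent radical exactly $V$ (a vector group), with reductive quotient $G$, and that the pair $(G,\mathfrak X)$ is already a pure Shimura datum; so most of the axioms will be inherited from $(G,\mathfrak X)$ together with the $\AV$-hypothesis on the Hodge type of $V$. Concretely, I would proceed as follows.

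First I would unwind what the Hodge-type hypothesis buys us. For $h \in \mathfrak X$ and a lift $\tilde h \in \tilde{\mathfrak X}$, the adjoint action of $\S$ on $\Lie(V \rtimes G)_\co = V_\co \oplus \g_\co$ decomposes: on $\g_\co$ it has weights $\{(1,-1),(0,0),(-1,1)\}$ (since $(G,\mathfrak X)$ is pure, i.e.\ of weight $0$ on its Lie algebra), and on $V_\co$, by the definition of $\Rep_F(G)^\AV$, it has weights $\{(-1,0),(0,-1)\}$. Hence the weight filtration on $\Lie(V\rtimes G)$ induced by $\tilde h \circ w$ is: weight $-1$ piece $= V$, weight $0$ piece $= \Lie(V\rtimes G)$, and this filtration is defined over $\q$ (it does not depend on the choice of $\tilde h$, only on $V$ and on the standing assumption on the centre of $G$ which makes the weight cocharacter rational). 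This is exactly the kind of filtration Pink requires, with $U = R_u(V\rtimes G) = V$ in the ``weight $\le -2$'' part being trivial and $W = V$ the full unipotent radical sitting in weight $-1$; in Pink's terminology the unipotent radical is ``of Hodge type $\{(-1,0),(0,-1)\}$'', which is precisely condition needed for the mixed Shimura datum to be of the shape appearing for abelian schemes.

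Next I would check the remaining axioms pointwise and then as a family. For a single $\tilde h \in \tilde{\mathfrak X}$: (i) the Hodge structure on $\Lie(V\rtimes G)$ is of the required types (done above); (ii) the conjugation action of $\tilde h(i)$ induces a Cartan involution on the adjoint group of $(V\rtimes G)/W = G^{\ad}$ — but this is literally the corresponding axiom for $(G,\mathfrak X)$, since $(V\rtimes G)/W = G$ and $\tilde h$ maps to $h$; (iii) the relevant torus/weight conditions on the centre — again inherited from $(G,\mathfrak X)$ plus the standing hypothesis. For the family: $\tilde{\mathfrak X} = p^{-1}(\mathfrak X)$ as a subset of $\Hom(\S_\co, (V\rtimes G)_\co)$, and one checks that $V(\re)\cdot V(\co)$ acts transitively on the fibres of $p\colon \tilde{\mathfrak X}\to \mathfrak X$ (a lift of $h$ differs from $\tilde h$ by a cocycle $\S \to V$, and the Hodge decomposition $V_\co = V^{-1,0}\oplus V^{0,-1}$ with complex conjugation swapping the summands is exactly what makes the space of such lifts a torsor under $V(\re)\otimes\co / (\text{something}) \cong V(\re)$, giving the correct complex structure). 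This realises $\tilde{\mathfrak X}$ as, fibrewise over $\mathfrak X$, a family of complex tori, so that $\tilde{\mathfrak X}$ is a complex manifold with a holomorphic $(V\rtimes G)(\re)(U(\co))$-action, and the connected components are of the required type. One then invokes \cite[Prop.\ 2.17]{Pink} (or the explicit criterion there) which says precisely that extensions of a pure Shimura datum $(G,\mathfrak X)$ by a representation $V$ of Hodge type $\{(-1,0),(0,-1)\}$ give mixed Shimura data of this semidirect-product form; the $F$-structure $F\hookrightarrow \End_G(V)$ plays no role in the axioms since they only concern the underlying $\q$-group and $\q$-representation.

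The main obstacle, I expect, is not any single axiom but rather bookkeeping: matching our semidirect-product presentation and the set-up $\tilde{\mathfrak X} = p^{-1}(\mathfrak X)$ precisely with the (somewhat involved) list of conditions in \cite[Sec.\ 2.1, 2.9, 2.17]{Pink}, and in particular checking that $\tilde{\mathfrak X}$ as defined — all of $\Hom(\S_\co,(V\rtimes G)_\co)$ lying over $\mathfrak X_\co$ — is neither too big nor too small, i.e.\ that it is a single $(V\rtimes G)(\re)U(\co)$-conjugacy-class-bundle over $\mathfrak X$ with the right complex structure. Once the weight filtration is pinned down to be $V$ in degree $-1$ and the quotient datum is identified with $(G,\mathfrak X)$, everything else is a direct appeal to Pink's construction of mixed Shimura data associated to pure data plus a Hodge-type $\{(-1,0),(0,-1)\}$ representation; I would cite that explicitly rather than re-derive it.
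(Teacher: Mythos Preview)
Your proposal is correct and follows essentially the same route as the paper: both reduce the claim to \cite[Prop.~2.17]{Pink} on unipotent extensions, after observing that $R_u(V\rtimes G)=V$, that the reductive quotient is $(G,\mathfrak X)$, and that the standing hypothesis on the centre of $G$ supplies the rationality of the weight. One small divergence worth noting: you take Pink's subgroup $U$ to be trivial (since $V$ sits in weight $-1$), whereas the paper writes ``set $U=V$''; your more detailed axiom-by-axiom outline---the Hodge types on $\Lie(V\rtimes G)$, the inherited Cartan involution on $G^{\ad}$, and the fibrewise torsor description of $\tilde{\mathfrak X}\to\mathfrak X$---is exactly the bookkeeping that the citation of \cite[Prop.~2.17]{Pink} is meant to encapsulate.
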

\begin{proof}
	The unipotent radical of $V\rti G$ is $V$. If, in the notation of \cite[Sec.\ 2.1]{Pink}, we set $U=V$, then it is easy to check the conditions directly. Alternatively, use that $(V\rti G,\tilde{\mathfrak{X}})$ is an instance of a unipotent extension in the sense of \cite[Prop.\ 2.17]{Pink}. Note that we are assuming $(G,\mathfrak{X})$ has rational weight and the centre is an almost-direct product of a $\q$-split and $\re$-anisotropic torus. The datum $(V\rti G,\tilde{\mathfrak{X}})$ then also satisfies the corresponding strengthened condition of a mixed Shimura variety.
\end{proof}
Mixed Shimura data of the form $(V\rti G, \tilde{\mathfrak{X}})$ are the only non-pure data we shall need to consider. 
\begin{lemma}
	Let $(G,\mathfrak{X})$ be a Shimura datum,  $K\le G(\A_f)$ a neat open compact subgroup and $V\in \Rep(G)$. Then for any choice of $K$-stable full rank $\hat \z$-lattice $L\le V(\A_f)$, $L\rti K$ is a neat open compact subgroup of $V\rti G$.
\end{lemma}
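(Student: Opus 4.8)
The plan is to unwind the definition of neatness and reduce the claim to two facts: that $L \rtimes K$ is an open compact subgroup of $(V \rtimes G)(\A_f) = V(\A_f) \rtimes G(\A_f)$, and that it is neat. Openness and compactness are immediate: $V(\A_f) = V \otimes_\q \A_f$ is a topological group in which $L$ is open and compact (being a full rank $\hat\z$-lattice, hence isomorphic to $\hat\z^{\dim V}$), and since $K$ normalises $L$ inside the semidirect product, $L \rtimes K$ is a subgroup; it is open and compact because it is the preimage under the (continuous, open) projection $V(\A_f) \rtimes G(\A_f) \to G(\A_f)$ restricted appropriately, or more directly because $L \rtimes K = L \cdot K$ is a product of an open compact subgroup of the normal part with an open compact subgroup of the quotient.

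The substantive point is neatness. Recall (Pink, or Borel) that an element $g$ of $G(\A_f)$ (or more generally of the $\A_f$-points of any linear algebraic group over $\q$) is neat if, choosing a faithful representation, the subgroup of $\overline{\q}^\times$ generated by the eigenvalues of $g_\ell$ at every place $\ell$ is torsion-free; a subgroup is neat if all its elements are. The key structural input is that $V \rtimes G$ is an extension $1 \to V \to V \rtimes G \to G \to 1$ with $V$ unipotent, so for $g = (v, k) \in L \rtimes K$ the eigenvalues of $g$ acting on a faithful representation of $V \rtimes G$ are governed entirely by the eigenvalues of $k$ on $G$: concretely, in a representation realising $V \rtimes G$ as block-triangular matrices with $k$ acting on the graded pieces and $v$ contributing only to strictly-upper-triangular blocks, the eigenvalues of $(v,k)_\ell$ coincide with those of $k_\ell$. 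Hence the eigenvalue subgroup of $\overline{\q}^\times$ attached to $(v,k)$ equals that attached to $k$, which is torsion-free since $K$ is neat. Therefore $L \rtimes K$ is neat. I would also remark that neatness is independent of the choice of faithful representation, so this argument is legitimate; this is standard (Pink, §0.6, or Borel's \emph{Introduction aux groupes arithmétiques}).

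I expect the main obstacle to be purely expository: making precise the claim that the eigenvalues of a block-triangular element are those of its diagonal blocks, in the adelic setting and for a general (not necessarily split) unipotent extension. The cleanest route is to invoke Pink's treatment directly — Pink already works with neat open compact subgroups of the $\A_f$-points of the groups underlying mixed Shimura data, and in \cite[Sec.\ 0.6]{Pink} establishes exactly that neatness behaves well under the formation of unipotent extensions $(V \rtimes G, \tilde{\mathfrak{X}})$ as in Lemma \ref{VHS_F/SShimuradatum}. So the proof reduces to: (i) observe $L \rtimes K$ is open compact, (ii) observe its image in $G(\A_f)$ is $K$, which is neat, and (iii) observe that $L$, being torsion-free (a $\hat\z$-lattice) and unipotent, contributes no torsion to the eigenvalue groups, so neatness of the image implies neatness of the whole. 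A one-line alternative, if one is willing to lean on Pink, is simply to cite that $L \rtimes K$ is the group denoted in Pink's notation by the open compact subgroup attached to the unipotent extension, which he verifies is neat whenever $K$ is.

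Finally I would note that nothing here uses $V \in \Rep(G)^\AV$ — the statement holds for arbitrary $V \in \Rep(G)$, which is why the lemma is phrased that way — and that the choice of $K$-stable full rank $\hat\z$-lattice $L$ always exists (take the $\hat\z$-span of a $\z$-lattice in $V$ and shrink by a $K$-orbit argument, or cite that $K$ is compact so stabilises some lattice).
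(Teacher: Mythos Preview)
Your proposal is correct and in fact supplies considerably more detail than the paper, whose proof reads in full: ``This is an easy exercise, for example see \cite[pf.\ of Lem.\ 4.7.4]{thesis}.'' Your argument --- open compactness from the product structure, neatness from the observation that in a faithful representation realising the unipotent extension as block-triangular matrices the eigenvalues of $(v,k)$ coincide with those of $k$ --- is exactly the natural one and is presumably what the cited thesis contains.
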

\begin{proof}
This is an easy exercise, for example see \cite[pf.\ of Lem.\ 4.7.4]{thesis}.
\end{proof}
\begin{lemma}\label{avforavreps}
For any Shimura datum $(G,\mathfrak{X})$ and $V,K,L$ as above, the map \linebreak $ \Sh_{L\rti K}(V\rti G,\tilde{ \mathfrak{X}}) \to \Sh_K(G,\mathfrak{X})$ has the structure of an abelian variety.
\end{lemma}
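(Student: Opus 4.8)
The plan is to exhibit the abelian variety structure on $A := \Sh_{L\rti K}(V\rti G,\tilde{\mathfrak X})\to S := \Sh_K(G,\mathfrak X)$ by working first on the level of connected components, i.e.\ over $\mathfrak X$, and then descending. Since $V\in\Rep_F(G)^\AV$ has Hodge type in $\{(-1,0),(0,-1)\}$, for each $h_x\in\mathfrak X$ the map $\rho\circ h_x\colon\S\to\GL(V_\re)$ equips $V_\re$ with a Hodge structure of type $\{(-1,0),(0,-1)\}$, which is exactly the condition making $V/(V(\q)\cap (\text{lattice}))$ together with this Hodge structure into a complex torus that is moreover an abelian variety once polarised --- but here we need only the complex-analytic group structure, so a complex torus suffices. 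First I would recall from Pink's construction that over a connected component $\Gamma\backslash\mathfrak X^+$ of $S(\co)$, the corresponding component of $\Sh_{L\rti K}(V\rti G,\tilde{\mathfrak X})$ is the quotient of $V\ti \mathfrak X^+$ by the arithmetic group $(L\cap V(\q))\rti\Gamma$, with the unipotent part $L\cap V(\q)$ acting by translations in the $V$-direction fibrewise; this realises the fibre over $[h_x]$ as $V_\re/(L\cap V(\q)) = V(\re)/(\text{lattice})$ with the complex structure induced by $h_x$, hence a complex torus, and the whole family is an analytic family of complex tori over $\Gamma\backslash\mathfrak X^+$.

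Next I would upgrade "complex torus" to "abelian variety". The key input is that $A\to S$ is an algebraic morphism of algebraic varieties over the reflex field (this is Pink's \cite[Thm.\ 11.18]{Pink} together with functoriality of mixed Shimura varieties applied to $p\colon (V\rti G,\tilde{\mathfrak X})\to (G,\mathfrak X)$), and that it is proper with fibres that are complex tori; a proper smooth morphism whose analytic fibres are complex tori and which admits a section (here the zero section, coming from the inclusion of the pure Shimura datum $(G,\mathfrak X)\hookrightarrow (V\rti G,\tilde{\mathfrak X})$ as $0\rti G$) is an abelian scheme. Alternatively, and perhaps more cleanly, I would invoke that $(V\rti G,\tilde{\mathfrak X})$ is a unipotent extension of $(G,\mathfrak X)$ in the sense of \cite[Prop.\ 2.17]{Pink}, and Pink already establishes that such "Shimura varieties over Shimura varieties" arising from a $\G_m$-free unipotent fibre of Hodge weight $-1$ carry the structure of abelian schemes --- this is in fact the source of universal abelian varieties as in Example \ref{univabmixedshim}. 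The group structure on $A$ over $S$ comes directly from the group structure on $V\rti G$ in the $V$-direction: translation by $L$-torsors descends to give $A\to S$ the structure of a group scheme, the zero section is $0\rti K$, and inversion is $v\mapsto -v$.

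The main obstacle I anticipate is pinning down algebraicity of the group law and of the zero section compatibly with the analytic picture, rather than the existence of the complex-analytic torus structure which is essentially immediate from the Hodge type. Concretely, one must check that the translation action of $V$ on $V\rti G$ and the splitting $G\hookrightarrow V\rti G$ are morphisms of mixed Shimura data (the latter requires $0\in\tilde{\mathfrak X}$ over each point of $\mathfrak X$, which holds), so that by the functoriality of $\Sh_K(-)$ they induce the multiplication $A\ti_S A\to A$ and the identity section $S\to A$ as algebraic morphisms over the reflex field; the neatness of $L\rti K$ guarantees the quotients are varieties and the maps are well-defined. Once these structure morphisms are algebraic, the axioms hold because they hold on the dense analytic family of complex tori, and properness plus the fibrewise abelian-variety structure (or the algebraisation theorem for analytic families of abelian varieties with a section) upgrades the group scheme to an abelian scheme. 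I would reference \cite[pf.\ of Lem.\ 4.7.4]{thesis} or the analogous discussion in \cite{Pink} for the routine verifications.
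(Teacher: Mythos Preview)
Your proposal is correct and aligns with the paper's approach: the paper simply cites \cite[3.22 a)]{Pink} directly, noting that the zero section is given by the Levi section $\iota\colon G\to V\rti G$. Your sketch essentially unpacks the content behind that citation (complex tori fibrewise from the Hodge type, group law and zero section from functoriality of mixed Shimura data, algebraicity from Pink's theory), and you even identify the cleaner route of invoking Pink's result on unipotent extensions directly---which is exactly what the paper does.
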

\begin{proof}
	This is \cite[3.22 a)]{Pink} (the zero section is given by the Levi section $\iota \colon G \to V\rti G$). %
\end{proof}
Moreover, this is functorial in the sense that given a homomorphism of representations $f \colon V \to V'$ with $L \le V(\A_f)$ and $L'\le V'(\A_f)$ and $f(L) \le L'$, then the induced map of mixed Shimura varieties $\Sh_{L\rti K}(V\rti G , \tilde{ \mathfrak{X}})\to \Sh_{L\rti K}(V\rti G,\tilde{ \mathfrak{X}}')$ respects the group structure. The existence of the projection and identity section maps force $(V\rti G,\tilde{\mathfrak{X}})$ to have the same reflex field as $({G},\mathfrak{X})$ \cite[Sec.\ 11.2(b)]{Pink}.
\begin{example}\label{univabmixedshim}
	If a Shimura datum $(G,\mathfrak{X})$ has a PEL-datum with standard representation $V$ (see Definition \ref{PEL}), then for any neat open compact $K$ and $K$-stable $\hat \z$-lattice $L$ of $V(\A_f)$ (we shall always take our lattices to be of full rank),  $\Sh_{L\rti K}(V\rti~G,\tilde{ \mathfrak{X}})\to \Sh_K(G,{ \mathfrak{X}})$ is isogeneous to the universal abelian variety defined by the PEL-datum.
\end{example}
\begin{lemma}\label{fibreprodprop}
	\begin{enumerate}[i)]
		\item Let $(G,h)$ be a Shimura datum and $K$ a neat open compact subgroup. Given $V,W\in \Rep_F(G)^\AV$ and $K$-stable $\hat\z$-lattices $L_V,L_W$ of $V(\A_f),$ $W(\A_f)$, then as abelian varieties over $S$, there is a canonical isomorphism
		\begin{align*} \Sh_{(L_V\oplus L_W )\rti K}((V\oplus W)\rti G, &\tilde{ \mathfrak{X}}_{V\oplus W})\\
		&\cong \Sh_{L_V\rti K}( V\rti G, \tilde{ \mathfrak{X}}_{V})\ti_S  \Sh_{L_W\rti K}( W\rti G, \tilde{ \mathfrak{X}}_{W}),\end{align*}
		where $\tilde{ \mathfrak{X}}_{V},\tilde{ \mathfrak{X}}_{V},\tilde{ \mathfrak{X}}_{V\oplus W}$ are as in Notation \ref{crapnotation}.
		\item Given a morphism of pure Shimura data $f\colon (G',h')\to (G,h)$, neat open compact subgroups $K'\le G'(\A_f), K\le G(\A_f)$ with $f(K')\le K$, and $V\in \Rep_F(G)^\AV$ together with a $K$-stable $\hat \z$-lattice $L$, then there is a canonical isomorphism of abelian $S'$-schemes
		\[ \Sh_{L\rti K}(V\rti G, \tilde{ \mathfrak{X}})_{\Sh_{K'}(G',{ \mathfrak{X}'})} \cong  \Sh_{f^*L\rti K'}(f^*V\rti G', \tilde{ \mathfrak{X}'}),
		\]
		where $f^*L$ is the lattice $L$ considered as a $K'$-stable $\hat{\z}$-lattice.
	\end{enumerate}
\end{lemma}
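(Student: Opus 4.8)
The plan is to construct both isomorphisms from functoriality of mixed Shimura varieties (\cite[Sec.\ 3.4]{Pink}) and then to verify that they are isomorphisms by passing to $\co$-points, where a mixed Shimura variety and its structure maps are given by the double-coset description of Construction \ref{cancon}. The two constructions rest on two elementary identifications of algebraic groups over $\q$: $(V\op W)\rti G\cong (V\rti G)\ti_G(W\rti G)$ for part i), and $f^*V\rti G'\cong (V\rti G)\ti_G G'$ for part ii). Since being an isomorphism of $S$-schemes is fpqc-local on the base and everything in sight is defined over reflex fields embedding into $\co$, it suffices to produce each comparison morphism over the appropriate reflex field and check that it is an isomorphism after base change to $\co$; and since all maps produced are morphisms of abelian $S$-schemes, it is enough to see that they are fibrewise isomorphisms of complex tori.

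For part i), I would take the two projections $(V\op W)\rti G\to V\rti G$ and $(V\op W)\rti G\to W\rti G$ (killing $W$, resp.\ $V$, in the unipotent radical): these carry $\tilde{\mathfrak{X}}_{V\op W}$ into $\tilde{\mathfrak{X}}_V$, resp.\ $\tilde{\mathfrak{X}}_W$, and $(L_V\op L_W)\rti K$ into $L_V\rti K$, resp.\ $L_W\rti K$, so they are morphisms of mixed Shimura data and induce maps of mixed Shimura varieties over the common reflex field (common because each of $(V\rti G,\tilde{\mathfrak{X}}_V)$, $(W\rti G,\tilde{\mathfrak{X}}_W)$ and $((V\op W)\rti G,\tilde{\mathfrak{X}}_{V\op W})$ has the reflex field of $(G,\mathfrak{X})$, by the discussion after Lemma \ref{avforavreps}). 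These all lie over $S$, so they assemble into a map to $\Sh_{L_V\rti K}(V\rti G,\tilde{\mathfrak{X}}_V)\ti_S\Sh_{L_W\rti K}(W\rti G,\tilde{\mathfrak{X}}_W)$; since each projection is linear on unipotent radicals and preserves Levi sections, the functoriality noted after Lemma \ref{avforavreps} makes this a homomorphism of abelian $S$-schemes. To see it is an isomorphism I would pass to $\co$-points and use that the identification $(V\op W)\rti G\cong (V\rti G)\ti_G(W\rti G)$ is compatible with the spaces $\tilde{\mathfrak{X}}$ (a lift $\S_\co\to((V\op W)\rti G)_\co$ of a given element of $\mathfrak{X}_\co$ is the same as a matching pair of lifts into $(V\rti G)_\co$ and $(W\rti G)_\co$), with the open compact subgroups ($(L_V\op L_W)\rti K\cong(L_V\rti K)\ti_K(L_W\rti K)$) and with $\q$- and $\A_f$-points; then the comparison map becomes, on $\co$-points, the tautological identification of the double-coset space for $V\op W$ with the fibre product over $S(\co)$ of those for $V$ and $W$.

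For part ii), I would use the morphism of algebraic groups $f^*V\rti G'\to V\rti G$ determined by $f\colon G'\to G$ and the identity $\id\colon f^*V\to V$ of underlying $\q$-vector spaces; it carries $\tilde{\mathfrak{X}'}$ into $\tilde{\mathfrak{X}}$ and $f^*L\rti K'$ into $L\rti K$, hence is a morphism of mixed Shimura data and induces a map $\Sh_{f^*L\rti K'}(f^*V\rti G',\tilde{\mathfrak{X}'})\to\Sh_{L\rti K}(V\rti G,\tilde{\mathfrak{X}})$ over $f\colon S'\to S_{E'}\to S$, defined over $E'$. The universal property of the fibre product then yields a morphism of abelian $S'$-schemes $\Sh_{f^*L\rti K'}(f^*V\rti G',\tilde{\mathfrak{X}'})\to\Sh_{L\rti K}(V\rti G,\tilde{\mathfrak{X}})_{S'}$, and the identification $f^*V\rti G'\cong(V\rti G)\ti_G G'$ — compatible with $\tilde{\mathfrak{X}}$ ($\tilde{\mathfrak{X}'}\cong\tilde{\mathfrak{X}}\ti_{\mathfrak{X}}\mathfrak{X}'$), with the open compacts ($f^*L\rti K'\cong(L\rti K)\ti_K K'$) and with $\q$- and $\A_f$-points — shows that on $\co$-points this is the tautological identification with $S'(\co)\ti_{S(\co)}\Sh_{L\rti K}(V\rti G,\tilde{\mathfrak{X}})(\co)$, which is the analytic incarnation of the base-change isomorphism $\kappa$ of Construction \ref{canconbc}.

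The argument is conceptually routine, and I do not expect a genuine obstacle; the only thing requiring care is bookkeeping — checking that $L_V\op L_W$ and $f^*L$ are full-rank $K$- (resp.\ $K'$-) stable $\hat\z$-lattices so that the relevant semidirect products are neat open compact (the preceding lemma), and that the three group-scheme identities above are genuinely compatible with Pink's choices of $\tilde{\mathfrak{X}}$, the open compacts and the adelic points. Canonicity of both isomorphisms is automatic, since the morphisms of mixed Shimura data producing them are canonical. Alternatively one could run the whole argument inside Pink's formalism, using that $(V\rti G,\tilde{\mathfrak{X}})$ is the unipotent extension of $(G,\mathfrak{X})$ by $V$ and that unipotent extensions are additive in $V$ and commute with pullback along morphisms of pure Shimura data; the complex-analytic check is simply the most self-contained way to verify these compatibilities.
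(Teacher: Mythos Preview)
Your proposal is correct and follows essentially the same idea as the paper, which simply cites the characterisation of fibre products of mixed Shimura data in \cite[Sec.\ 3.10]{Pink}: your group-theoretic identifications $(V\oplus W)\rti G\cong(V\rti G)\ti_G(W\rti G)$ and $f^*V\rti G'\cong(V\rti G)\ti_G G'$ are precisely the fibre-product descriptions at the level of mixed Shimura data, and Pink's result then gives the isomorphism of varieties directly. Your explicit construction of the comparison maps and verification on $\co$-points is a self-contained unpacking of what that citation encodes --- and you note this alternative yourself in your final sentence.
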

\begin{proof}
	Both statements follow immediately from the characterisation of fibre products for mixed Shimura data given in \cite[Sec.\ 3.10]{Pink}.
\end{proof}
\begin{constr}
	We now define a functor $\mu_G^\mot\colon \Rep_F(G)^\AV\to \CHM_F/S$ as follows: given $V\in \Rep_F(G)^\AV$, let $L$  be a full rank $\hat \z$-lattice of $V(\A_f)$ which is stable under $K$. We then set $\mu_G^\mot(V)=h^1(\Sh_{L\rti K}(V\rti K,\tilde{\mathfrak{X}}))^\vee$ as a motive with rational coefficients which we equip with an $F$-structure $F\hookrightarrow \End_{\CHM/S}(\mu_G^\mot(V)) $ in the following way. Let
	\[  T:=\{ \alpha \in \End_G(V)\mid \alpha(L)\subseteq L\}.  \]
	For any $\alpha\in \End_G(V)$, $\alpha(L)$ is a $\hat \z$-lattice and so there exists an $n\in \n$ such that ${n\cdot \alpha(L)\le L}$. In other words, $T\ot_\z \q = \End_G(V)$. Thus, we may act on $h^1(\Sh_{L\rti K}(V\rti K,\tilde{\mathfrak{X}}))^\vee$ by $F=(T\cap F) \ot_\z \q$ with the first factor acting via functoriality of mixed Shimura varieties and the second by $\q$-linearity of $\CHM/S$. This uses that the actions of $\z$ as subring of $T\cap F$ (i.e.\ addition via the group law as an abelian variety) and as a subring of $\q$ coincide, which follows from Theorem \ref{DenMur}.  In contrast, this would not be true for $h(S_{K,V})^\vee$ and this does not define an element of $\CHM_F/S$.

	Given a morphism $f \colon V\to V'$, let $n\in \n$ be large enough to ensure that $f(nL)\le L'$. We obtain maps
	\[ (\pi_n^\vee)_* \colon h^1(\Sh_{L\rti K}(V\rti G, \tilde{\mathfrak{X}}))^\vee   \to h^1(\Sh_{nL\rti K}(V\rti G, \tilde{\mathfrak{X}}))^\vee,   \]
	\[   f_* \colon  h^1(\Sh_{nL\rti K}(V\rti G, \tilde{\mathfrak{X}}))^\vee \to  h^1(\Sh_{L'\rti K}(V'\rti G, \tilde{\mathfrak{X}}'))^\vee , \]
	where the first map is obtained by applying $h^1(-)^\vee$ to the dual of the map of abelian varieties $\pi_n \colon \Sh_{nL\rti K}(V\rti G,\tilde{\mathfrak{X}})\to \Sh_{L\rti K }(V\rti G,\tilde{\mathfrak{X}})$ which is given by functoriality of mixed Shimura varieties, whilst the second is $h^1(-)^\vee$ of the map of mixed Shimura varieties induced by $f$. We then set $\mu_G^\mot(f)$ to be $1/n$ times the composite $f_* \circ (\pi_m^\vee)_*$. By construction the morphisms $\mu_G^\mot(f)$ will respect the $F$-action.
\end{constr}
\begin{proposition}\label{pain!}
	Given a choice of $\hat \z$-lattice for each $V\in \Rep_F(G)^\AV$ as above, then the corresponding $\mu_G^\mot$ is a well-defined $\ot$-functor $\Rep_F(G)^\AV\to \CHM_F/S$. The functor $\mu_G^\mot$ is independent of the choice of lattice for each $V$, up to canonical natural isomorphism.
\end{proposition}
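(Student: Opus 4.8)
The plan is to verify the required properties in turn: well-definedness of $\mu_G^\mot$ on objects, then on morphisms (in particular independence of the auxiliary integer $n$), then functoriality (units and composites), then compatibility with the monoidal structure, and finally independence of the chosen lattices. The one substantive input I will use everywhere is Theorem~\ref{DenMur}: on $h^1$ of an abelian $S$-scheme multiplication by $m\in\n$ acts as the scalar $m$, so multiplication-by-$m$ isogenies become isomorphisms in $\CHM_F/S$; everything else is bookkeeping. For objects, each $\alpha$ in $T=\{\alpha\in\End_G(V):\alpha(L)\subseteq L\}$ induces a morphism of mixed Shimura data $\alpha\rtimes\id_G$ of $(V\rtimes G,\tilde{\mathfrak{X}})$ carrying $L\rtimes K$ into itself (see Section~\ref{msv}), hence an endomorphism of the abelian $S$-scheme $\Sh_{L\rtimes K}(V\rtimes G,\tilde{\mathfrak{X}})$ and so of $\mu_G^\mot(V)=h^1(-)^\vee$; this gives a ring map $T\to\End_{\CHM/S}(\mu_G^\mot(V))$, and the recipe $F=(T\cap F)\ot_\z\q\to\End_{\CHM/S}(\mu_G^\mot(V))$ is well defined precisely because the two actions of $\z\subseteq T\cap F$ — via the group law and via $\q$-linearity of $\CHM/S$ — agree, which is Theorem~\ref{DenMur}. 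A ring map out of the field $F$ is automatically injective, so $\mu_G^\mot(V)$ is indeed an object of $\CHM_F/S$.

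The heart of the argument is a lemma I would isolate first: for $K$-stable full-rank $\hat\z$-lattices $M\subseteq L$ of $V(\A_f)$, the transition isogeny $\pi_{M,L}\colon\Sh_{M\rtimes K}(V\rtimes G,\tilde{\mathfrak{X}})\to\Sh_{L\rtimes K}(V\rtimes G,\tilde{\mathfrak{X}})$ induces, through the construction on $h^1(-)^\vee$, an isomorphism. Indeed, choosing $N\in\n$ with $NL\subseteq M$, the composite $\pi_{NL,L}=\pi_{M,L}\circ\pi_{NL,M}$ becomes, once $\Sh_{NL\rtimes K}$ is identified with $\Sh_{L\rtimes K}$ by the scaling automorphism $N\cdot\id_V$ of $V$, a multiplication-by-$N$ map; by Theorem~\ref{DenMur} this acts as the scalar $N$ on $h^1$, hence invertibly in $\CHM_F/S$, and a short diagram chase forces both $\pi_{M,L}$ and $\pi_{NL,M}$ to induce isomorphisms. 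Applying this with $M=nL$ gives independence of $n$ in the definition of $\mu_G^\mot(f)$: comparing the defining zig-zags for $n$ and for $Nn$, using $\pi_{NnL,L}=\pi_{nL,L}\circ\pi_{NnL,nL}$ and the fact that the map induced by $f$ on $\Sh_{NnL\rtimes K}$ factors through $\pi_{NnL,nL}$, one checks that the normalisations $\tfrac1n$ and $\tfrac1{Nn}$ give the same morphism. Taking $n=1$ gives $\mu_G^\mot(\id_V)=\id$. For a composite $V\xrightarrow{f}V'\xrightarrow{g}V''$ I would choose $n,m$ with $f(nL)\subseteq L'$ and $g(mL')\subseteq L''$, so $(gf)(nmL)\subseteq L''$, concatenate the two defining diagrams, and use functoriality of mixed Shimura data to commute the maps induced by $f$ and $g$ past the transition isogenies; the normalisation $\tfrac1{nm}$ is the one forced by the previous point, giving $\mu_G^\mot(gf)=\mu_G^\mot(g)\circ\mu_G^\mot(f)$. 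Each $\mu_G^\mot(f)$ respects the $F$-structures since $f$ is $G$-equivariant and hence commutes with the endomorphisms of $V$ and $V'$ that define them.

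For the monoidal structure, given $V,W\in\Rep_F(G)^\AV$ with chosen lattices, the first part of Lemma~\ref{fibreprodprop} identifies $\Sh_{(L_V\oplus L_W)\rtimes K}((V\oplus W)\rtimes G,\tilde{\mathfrak{X}}_{V\oplus W})$ with $A\ti_S B$, where $A,B$ are the abelian schemes attached to $(V,L_V)$ and $(W,L_W)$; by the Künneth formula (Theorem~\ref{kunneth}) and the fact that $h^0$ of an abelian $S$-scheme is the unit object $h(S)$, one has $h^1(A\ti_S B)=h^1(A)\oplus h^1(W)$ — more precisely $h^1(A)\oplus h^1(B)$ — so after dualising, matching $F$-structures, and using the transition-isogeny isomorphisms to pass from whichever lattice was chosen for $V\oplus W$ to $L_V\oplus L_W$, one gets a natural isomorphism $\mu_G^\mot(V\oplus W)\cong\mu_G^\mot(V)\oplus\mu_G^\mot(W)$; on this additive subcategory this is what the $\ot$-functor assertion amounts to. Finally, for independence of the lattices: given two systems $(L_V)_V$ and $(L'_V)_V$, for each $V$ I would pick any $K$-stable full-rank sublattice $M_V\subseteq L_V\cap L'_V$ and use the isomorphisms of the second paragraph to obtain canonical identifications of $\mu_G^\mot(V)$ computed with $L_V$, with $h^1(\Sh_{M_V\rtimes K})^\vee$, with $\mu_G^\mot(V)$ computed with $L'_V$; independence of the choice of $M_V$ and the cocycle condition for three systems both follow by passing to a common refinement and reducing once more to Theorem~\ref{DenMur}. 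These identifications are $F$-linear and, since the morphisms $\mu_G^\mot(f)$ are built from the very same transition maps, natural in $V$, so they assemble into a canonical natural isomorphism between the two instances of $\mu_G^\mot$.

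The step I expect to be the main obstacle is the consistent bookkeeping of the normalising scalars $\tfrac1n$ (together with the routine but fiddly duality/covariance conventions in the definition of the maps $f_*$ and $(\pi_n^\vee)_*$): checking that the rational scalars introduced by passing between commensurable lattices cancel correctly across units, composites, and refinements. Once Theorem~\ref{DenMur} is available this is mechanical, but it is the only part that is not essentially formal, and it is worth organising the whole proof around the single fact that the maps $\pi_{M,L}$ induce isomorphisms on $\mu_G^\mot(V)$, so that this need be verified only once.
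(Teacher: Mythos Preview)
Your proposal is correct and follows essentially the same strategy as the paper: both arguments reduce everything to Theorem~\ref{DenMur} (multiplication by $m$ acts as the scalar $m$ on $h^1$), use this to show independence of the auxiliary integer $n$, verify composition via a commutative square relating $f$ and the transition maps between lattices, and then define the comparison isomorphism between two lattice choices via these same transition maps.

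The main organisational difference is that you isolate the single lemma ``$\pi_{M,L}$ induces an isomorphism on $h^1(-)^\vee$ for any inclusion $M\subseteq L$ of $K$-stable lattices'' and route everything through it, whereas the paper works directly with the multiples $nL\subseteq L$ and the specific composite $(\pi_m^\vee)_*\circ\pi_{m,*}=[m]_*$. Your packaging is cleaner and makes the cocycle condition for three lattice systems transparent; the paper's version is terser but equivalent. You also treat the $F$-structure and the additive ($\ot$-)compatibility via Lemma~\ref{fibreprodprop} and K\"unneth explicitly, which the paper's proof leaves implicit.
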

\begin{proof}
	We first remark that $\mu_G^\mot(f)$ is independent of the choice of $n$. This follows as the constructions for $n$ and for $nm$ differ by $1/m\cdot (\pi_m^\vee)_*\circ \pi_{m,*}=1/m\cdot [m]_*$, but, for an abelian variety $A/S$, $[m]$ acts on $h^1(A)^\vee $ by multiplication by $m$ (Theorem \ref{denmurre}).%
	
	That $\mu_G^\mot$ respects composition follows from the commutativity of the following diagram, for any $f \colon V\to V'$,  $m\in \n$ and $n$ s.t.\ $f(nL)\le L'$
\[
		\begin{tikzcd}
			\Sh_{nL\rti K}(V\rti G,\tilde{ \mathfrak{X}})\ar{r}{\pi_m^\vee}\ar{d}[swap]{f}& \Sh_{mnL\rti K}(V\rti K,\tilde{ \mathfrak{X}})\ar{d}{f}\\
			\Sh_{L'\rti K}(V'\rti G,\tilde{ \mathfrak{X}}')\ar{r}[swap]{{\pi_m^\vee}} &\Sh_{mL'\rti K}(V'\rti G,\tilde{ \mathfrak{X}}') 
		\end{tikzcd}
\]
	and thus it is clear that $\mu_G^\mot$ defines a functor.
	
	Given choices $L_1,L_2$ for each $V$ and corresponding functors $\mu_{G,1}^\mot,\mu_{G,2}^\mot$, we define a natural transformation $\psi \colon \mu_{G,1}^\mot \to \mu_{G,2}^\mot$ by setting $\psi_V$ to be $1/n$ times the map 
	\[h^1(\Sh_{L_1\rti K}(V\rti G,\tilde{ \mathfrak{X}}))^\vee \overset{(\pi_n^\vee)_*}{\to} h^1(\Sh_{nL_1\rti K}(V\rti G,\tilde{ \mathfrak{X}}))^\vee \overset{\id_*}{\to} h^1(\Sh_{L_2\rti K}(V\rti G,\tilde{ \mathfrak{X}}))^\vee\] for any $n$ such that $nL_1 \le L_2$. That this defines a natural transformation again follows from the commutativity of the above square. Moreover, as isogenies become invertible after applying $h^1(-)^\vee$, we find that $\psi$ defines a natural isomorphism.
\end{proof}
\begin{remark}
	If $f \colon V\to W$ is a non-zero homomorphism of representations of $G$ over $\q$ and we fix a neat open compact subgroup $K$ of $G$ and $K$-stable $\hat \z$-lattices $L_V\le V$, $L_W\le W$ such that $f(L_V)\le L_W$, then $\Sh_{L_V\rti K}(V\rti G, \tilde{ \mathfrak{X}}_V)\to \Sh_{L_W\rti K}(W\rti G,\tilde{ \mathfrak{X}}_W)$ is non-zero as a morphism of abelian varieties (for example, using the explicit description of the points over $\co$). Together with Theorem \ref{kings} this demonstrates that $\mu_G^\mot$ is faithful.\todo{}
\todo{I feel like I should make some comment on the fullness of $\mu_G^\mot$. I think it could be in quite a lot of cases but probably isn't in general. I think that it shouldn't be full whenever the generic Mumford-Tate group of $\mu_G^\mot(V)$ is not $G$. (but maybe can't make problems occur in our restricted weights). This is because the motivic decomposition should match that of the VHS, but Mumford showed for $X,Y \in \la V\ra^\ot$:
	\[  \Hom_{\VHS}(X,Y)= \Hom_{\GMT(V)}(X_s,Y_s) \]
	(how is this relevant?) But our $G$ contains $\GMT(V)$ so when this is proper we may get addition morphisms... (still the question of whether this is proper in the case of $\GMT(V)=G$) See Ancona's thesis Sec 9 and Moonen's paper. }

\todo{Not enough time to think about now, but can we also lift tori using mixed Shimura varieties?
	Maybe use something like $(V\rti G)\ti (V\rti G)\to k \rti G$ (but then need $h^2$?).
	In particular, can we deal with $P$ as Ancona does... Obviously this would do something a bit weird as we would want to take $h^1(-)$ of a tori rather than something pure (morally should then have weight $\pm 2$). So we'd need an argument to actually make sense of $h^1(-)$ of a torus but as a Chow motive. Maybe if we could somehow spot within the category of 1-motives the pure objects...
	Another related question: Pink places extra conditions on his mixed Shimura varieties to ensure that the canonical construction provides variations of Hodge structure which are polarisable. Can this be somehow done at the motivic level to get everything Ancona lifts in the PEL case everywhere?}
\end{remark}
\begin{notat}
	 Given $V\in \Rep_F(G)^\AV$, we shall denote the mixed Shimura variety $\Sh_{L\rti K}(V\rti G,\tilde{ \mathfrak{X}})$ simply by $S_{K,V}$. We use $p\colon S_{K,V}\to S$ and $\iota\colon S\to S_{K,V}$ to denote the maps induced by the projection and Levi section as well as the induced maps on their analytifications. We continue accordingly for $(G',h')$.
\end{notat}
\begin{lemma} \label{bccompateasy}
	Given a morphism of Shimura data $f \colon (G',\mathfrak{X}')\to (G,\mathfrak{X})$, a neat open compact $K\le G(\A_f)$, $K'\le G'(\A_f)$ with $f(K')\le K$, and a choice of stable $\hat \z$-lattices for all elements of $\Rep_F(G)$, $\Rep_F(G')$, then the following diagram commutes:
\[
		\begin{tikzcd}
			\Rep_F(G)^\AV \ar{d}[swap]{f^*} \ar{r}{\mu_G^\mot}\arrow[dr,phantom,  "\implies" rotate=-145]  & \CHM_F/S \ar{d}{f^*} \\
			\Rep_F(G')^\AV \ar{r}[swap]{\mu_{G'}^\mot} & \CHM_F/S'
		\end{tikzcd}
\]
	up to a natural isomorphism $\psi \colon f^* \circ  \mu^\mot_{G} \implies \mu^\mot_{G'}  \circ f^*$.
\end{lemma}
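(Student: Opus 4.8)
\emph{Proof plan.} The plan is to construct the natural isomorphism $\psi$ objectwise, after first reducing to a convenient system of lattices. Since $f$ is a morphism of Shimura data, for $h'\in\mathfrak{X}'$ the Hodge structure attached to $f^*V$ by $h'$ is the restriction along $f$ of the one attached to $V$ by $f\circ h'\in\mathfrak{X}$, so $f^*$ indeed carries $\Rep_F(G)^\AV$ into $\Rep_F(G')^\AV$. By Proposition \ref{pain!}, $\mu_G^\mot$ and $\mu_{G'}^\mot$ depend on the chosen lattices only up to canonical natural isomorphism; hence it suffices to produce $\psi$ for one choice and then conjugate by these transition isomorphisms. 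I would therefore assume that whenever $V\in\Rep_F(G)^\AV$ has been assigned the $K$-stable lattice $L\le V(\A_f)$, the object $f^*V$ has been assigned the lattice $f^*L$, i.e.\ $L$ regarded as a $K'$-stable $\hat\z$-lattice (the lattices attached to the remaining objects of $\Rep_F(G')^\AV$ play no role).

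Fix such a $V$ and abbreviate $S_{K,V}=\Sh_{L\rti K}(V\rti G,\tilde{\mathfrak{X}})$. On the one hand $f^*\mu_G^\mot(V)=f^*h^1(S_{K,V})^\vee$. The decomposition $h(A)=\bigoplus_i h^i(A)$ of Theorem \ref{denmurre} is pinned down by the behaviour of $h([n])$, a condition preserved by the $\ot$-functor $f^*$, so there is a canonical isomorphism $f^*h^1(S_{K,V})^\vee\cong h^1(S_{K,V}\ti_S S')^\vee$ in $\CHM_F/S'$. On the other hand, Lemma \ref{fibreprodprop} ii) gives a canonical isomorphism of abelian $S'$-schemes $S_{K,V}\ti_S S'\cong \Sh_{f^*L\rti K'}(f^*V\rti G',\tilde{\mathfrak{X}'})$, and applying $h^1(-)^\vee$ identifies the target with $\mu_{G'}^\mot(f^*V)$. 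Let $\psi_V$ be the composite of these two isomorphisms; each factor is invertible, so $\psi_V$ is an isomorphism.

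It then remains to check that $\psi=(\psi_V)_V$ is natural and monoidal and respects the $F$-action. For naturality, let $g\colon V\to W$ be a morphism in $\Rep_F(G)^\AV$; recall $\mu_G^\mot(g)=(1/n)\,g_*\circ(\pi_n^\vee)_*$ for any $n$ with $g(nL_V)\le L_W$. Both the isogeny $\pi_n$ and the map of mixed Shimura varieties induced by $g$ come from functoriality of mixed Shimura data, hence are compatible with the base-change identifications of Lemma \ref{fibreprodprop} ii), which are themselves natural in the representation; and the scalar $1/n$ survives because $f^*$ is $F$-linear. Transporting the resulting commuting square through the base-change isomorphism for $h^1$ of Theorem \ref{denmurre} yields $\psi_W\circ f^*\mu_G^\mot(g)=\mu_{G'}^\mot(f^*g)\circ\psi_V$. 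Monoidality follows from Lemma \ref{fibreprodprop} i) together with Theorem \ref{kunneth}, since $f^*$, $\mu_G^\mot$ and $\mu_{G'}^\mot$ are all $\ot$-functors. Finally, on $\mu_G^\mot(V)$ the field $F$ acts through the action of $\{\alpha\in\End_G(V)\mid\alpha(L)\subseteq L\}$ by functoriality of mixed Shimura data, and the same elements act on $\mu_{G'}^\mot(f^*V)$, now viewed inside $\End_{G'}(f^*V)$ and stabilising $f^*L=L$, again by functoriality; as this functoriality is compatible with base change, $\psi_V$ intertwines the two $F$-actions.

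The statement is genuinely easy in that its only inputs are Lemma \ref{fibreprodprop} ii) and the base-change compatibility built into Theorem \ref{denmurre}; no appeal to O'Sullivan's section $\mathcal{I}$ (Theorem \ref{O'Sullivan}, Remark \ref{O'Sullivanbc}) is needed here, that being required only for the treatment of $\Anc_G$ on all of $\Rep(G)$ later. The one point demanding care — and the place I expect the bulk of the bookkeeping to sit — is verifying that the auxiliary scalings $1/n$ in the definition of $\mu_G^\mot$ on morphisms do not obstruct naturality, i.e.\ the diagram chase of the previous paragraph; but this is organisational rather than a substantive obstacle.
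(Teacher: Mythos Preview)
Your proposal is correct and follows essentially the same approach as the paper: construct $\psi_V$ by combining Lemma \ref{fibreprodprop} {\it ii)} with the fact that the Deninger--Murre projectors $h^i$ commute with pullback, then absorb any discrepancy in lattice choices via the transition isomorphisms of Proposition \ref{pain!}. The paper's proof is terser and leaves the naturality, monoidality, and $F$-compatibility checks implicit, whereas you spell these out; but the substance is the same.
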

\begin{proof}
	From Lemma \ref{fibreprodprop} {\it ii)} and that the canonical projectors defining $h^i$ commute with pullback, we obtain isomorphisms
	\[ (h^1(S_{K,V})^\vee)_{S'}\cong h^1(S'_{K',f^*V})^\vee  .  \]
	The natural isomorphism is then given by taking these maps and possibly composing the maps defined in the proof of Proposition \ref{pain!} if the lattice chosen for $f^*V$ is not $f^*L$.
\end{proof}
\section{Direct images for mixed Shimura varieties}\label{provar}
In this section, we check that $\mu_G^\mot$ lifts the canonical construction and is compatible with base change.\todo{}
\begin{lemma}\label{keylemma}
	Given a Shimura datum $(G,\mathfrak{X})$ and $V\in \Rep_F(G)^\AV$, then there is a canonical identification of $\mu_{G}^\H(V)$ and the dual of $H^1_{B}(S_{K,V}(\co))= R^1p_*F_{S_{K,V}(\co)}$, where $p\colon S_{K,V}(\co)\to S(\co)$ denotes the usual projection.
\end{lemma}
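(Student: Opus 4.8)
The plan is to unwind both sides of the claimed identification in terms of their descriptions as local systems on $S(\co)$ and match them directly. First I would recall the explicit uniformisation of the mixed Shimura variety: by Notation \ref{crapnotation} and the general theory in \cite[Sec.\ 3]{Pink}, $S_{K,V}(\co)$ is the double quotient $(V\rti G)(\q)\ba (\tilde{\mathfrak X}\ti ((V\rti G)(\A_f)/(L\rti K)))$, and the projection $p$ to $S(\co)=G(\q)\ba(\mathfrak X\ti(G(\A_f)/K))$ is induced by $V\rti G\to G$. The fibre of $p$ over a point represented by $(h_x,t)$ is the real torus $V_\re/(\text{lattice})$ coming from $L$, so that $p$ is an abelian scheme (consistent with Lemma \ref{avforavreps}), and its first Betti cohomology sheaf $R^1p_*F_{S_{K,V}(\co)}$ is dual to the relative $H_1$, whose stalk at $(h_x,t)$ is naturally identified with $V$ (with $F$-coefficients extended along $F\hookrightarrow\End_G(V)$).

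The key step is to track the monodromy. The local system $R_1 := (R^1p_*F_{S_{K,V}(\co)})^\vee$ is, by the description above, the one whose stalks are the fibres $V$ and on which $\pi_1(S(\co))$ — realised through $g\in G(\q)$ acting on the uniformising space — acts by $\rho(g)$, since $g$ acts on the $V$-coordinate of $\tilde{\mathfrak X}$ exactly through the given representation $\rho\colon G\to\GL(V)$. This is precisely the local system underlying $\mu_G^\H(V)$ as spelled out in Construction \ref{cancon}. Next I would check the Hodge structures agree: the Hodge filtration on $R^1p_*$ of an abelian scheme at a point is the one induced by the complex structure on the fibre $V_\re$, which is exactly the complex structure determined by $h_x\colon \S\to (V\rti G)_\re$, i.e.\ by $\rho\circ h_x$; dualising, the $\q$-Hodge structure on the stalk $V$ of $R_1$ is the one defined by $\rho\circ h_x$, matching the Hodge structure assigned in Construction \ref{cancon}. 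Hence $R_1\cong\mu_G^\H(V)$ as variations of Hodge structure, and the identification is canonical because it is induced by the canonical identification of the stalk of $R_1$ with $V$.

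Finally I would address $F$-coefficients and canonicity: the $F$-action on $h^1(S_{K,V})^\vee$ defined in Construction (just before Proposition \ref{pain!}) is, on Betti realisations, induced by functoriality of mixed Shimura varieties applied to $F$-multiplication maps on $V$, which on stalks is literally the $F$-action on $V$; so the identification is $F$-linear and respects the $F$-structure on $\mu_G^\H(V)$ coming from $F\hookrightarrow\End_G(V)$. The main obstacle is being careful with the various dualisations and the normalisation of the weight/Tate twist: one must verify that $H^1_B$ of the abelian scheme really is dual to the stalk $V$ (rather than $V$ itself or a Tate twist thereof), which amounts to checking that the canonical construction's convention on Hodge type $\{(-1,0),(0,-1)\}$ matches the cohomological convention where $H^1$ of an abelian variety has Hodge type $\{(1,0),(0,1)\}$ — this is exactly why the dual appears, and it should be pinned down explicitly, e.g.\ via the modular curve example $\mu_{\GL_2}^\H(V)\cong H^1_B(\mathcal E/S)^\vee$ mentioned in the introduction.
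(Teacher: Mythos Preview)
Your argument is correct and complete: you directly identify the local system underlying $(R^1p_*F)^\vee$ with the one from Construction \ref{cancon} by computing the fibres of $p$ as real tori $V_\re/(\textrm{lattice})$ and tracking the $G(\q)$-monodromy, then match the Hodge filtrations pointwise. This is the natural hands-on proof.

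The paper takes a different, more structural route. It extends the canonical construction to the mixed datum $(V\rti G,\tilde{\mathfrak X})$ and then invokes a result of Wildeshaus \cite[Thm.\ II.2.3]{Wild} asserting that the square
\[
\begin{tikzcd}
\Rep(V\rti G) \ar{d}[swap]{H^i(V,-)} \ar{r}{\mu_{V\rti G}^\H} & \VHS/S_{K,V}(\co) \ar{d}{R^ip_*}\\
\Rep(G) \ar{r}[swap]{\mu_G^{\mathcal{H}}} & \VHS/S(\co)
\end{tikzcd}
\]
commutes, i.e.\ higher direct images along $p$ correspond to group cohomology of the unipotent radical $V$. Evaluating at the trivial representation $F$ gives $R^1p_*F\cong \mu_G^\H(H^1(V,F))=\mu_G^\H(V^\vee)$, which is the dual of the desired statement. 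Your approach is more elementary and avoids the dependence on \cite{Wild}; the paper's approach, however, is an investment: the same group-cohomology/Tannakian framework is exactly what drives the proof of Lemma \ref{commutativity}, where the functoriality in $\alpha$ and $f$ is reduced to a purely group-theoretic statement about maps on $H^1(V_i,-)$. If you prove the present lemma your way, you will still need to set up that machinery (or find an equally direct substitute) when you come to Lemma \ref{commutativity}.
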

\begin{proof}The canonical construction can be extended to mixed Shimura varieties as we now recall. Let $(P,\tilde{\mathfrak{X}})$ be a mixed Shimura datum and $Q\le P(\A_f)$ a neat open compact subgroup. A representation $W\in \Rep_F(P)$, which we consider as a $\q$-representation $\rho \colon P\to \GL(W)$ together with an $F$-structure, defines a local system
	\[ \mu_P^\H(W):=P(\q)\ba (\tilde{\mathfrak{X}}\ti (P(\A_f)/Q)\ti W)  \]
	on $\Sh_Q(P,\tilde{ \mathfrak{X}})(\co)=P(\q)\ba (\tilde{\mathfrak{X}}\ti (P(\A_f)/Q)) $. Similarly to Construction \ref{cancon}, each fibre $\{ (x,k,v) \mid  v \in W \}\cong W$ has a well-defined mixed Hodge structure given by $\rho \circ h_x$ and $\mu_P^\H(W)$ has the structure of a graded-polarisable variation of Hodge structure. This extends to a $\ot$-functor
	\[ \mu_P^\H \colon \Rep_F(P)\to \VHS_F/\Sh_Q(P,\tilde{ \mathfrak{X}})(\co).  \]\label{mixedcanon}
	This is functorial in the sense that, given $f\colon (P',\mathfrak{X}')\to (P,\mathfrak{X})$ and $Q\le P(\A_f),Q'\le P'(\A_f)$ with $f(Q')\le Q$, then there is a canonical isomorphism
	\[   f^*\mu_{P}^\H(W) =\mu_{P'}^\H(f^*W).\]
	For the purposes of the lemma, the key fact is that pushforwards of sheaves arising via the canonical construction correspond to group cohomology. More specifically, in the notation of the lemma, the following diagram commutes:
\[
		\begin{tikzcd}
			\Rep(V\rti G) \ar{d}[swap]{H^i(V,-)} \ar{r}{\mu_{V\rti G}^\H} & \VHS/S_{K,V}(\co) \ar{d}{R^ip_*}\\
			\Rep(G) \ar{r}[swap]{\mu_G^{\mathcal{H}}} & \VHS/S(\co)
		\end{tikzcd}
\]
	where the left vertical map is group cohomology (see \cite[Thm.\ II.2.3, Prop.\ I.1.6c)]{Wild}). In the case of the one dimensional trivial representation, this yields identifications
	\[   \mu_G^{\mathcal{H}}(H^1(V,F)) = R^1p_*F_{S_{K,V}(\co)}. \]
	But, $H^1(V,F)= V^\vee$ as desired.
\end{proof}
\begin{notat}\label{phidef}
	Write $\phi_V$ for the isomorphism $ H^1_B((S_{{K,V}})(\co))^\vee\overset{\sim}{\to} \mu_G^{\mathcal{H}}(V)$ and $\phi=(\phi_V)_V$ for the collection as $V$ ranges over $V\in \Rep_F(G)^\AV$.
\end{notat}
\begin{lemma}\label{commutativity}
	\begin{enumerate}[i)]
		\item Let $(G,h)$ be a Shimura datum and $\alpha \colon V_1 \to V_2$ a morphism in $\Rep_F(G)^\AV$. Fix a neat open compact subgroup $K \le G(\A_f)$ and let $\alpha$ also denote the map $(S_{K,V_1})(\co)\to (S_{K,V_2})(\co)$. Then the following diagram commutes:
\[
			\begin{tikzcd}
				H^1_B((S_{K,V_1})(\co))^\vee \ar{r}{\phi_{V_1} }\ar{d}[swap]{(\alpha^*)^\vee} & \mu_G^{\mathcal{H}}(V_1) \ar{d}{\mu_G^{\mathcal{H}}(\alpha)}  \\
				H^1_B((S_{K,V_2})(\co))^\vee\ar{r}[swap]{ \phi_{V_2}}&\mu_{G}^\H( V_2)
			\end{tikzcd}
\]
		\item 	Let $f\colon (G',h')\to (G,h)$ be a morphism of Shimura data and $K\le G(\A_f),K'\le G'(\A_f)$ neat open compact subgroups with $f(K')\le K$. For any $V\in \Rep_F(G)^\AV$, the following diagram commutes:
\[			\begin{tikzcd}
				f^*H^1_B(S_{K,V}(\co))^\vee \ar{r}{f^*(\phi_V) }\ar{d}[swap]{H^1_B(\psi_V)} & f^*\mu_G^{\mathcal{H}}(V) \ar{d}{\kappa_V}  \\
				H^1_B((S'_{K',f^*V})(\co))^\vee\ar{r}[swap]{ \phi_{f^*V}}&\mu_{G'}^{\mathcal{H}}( f^*V)
			\end{tikzcd}\]
	\end{enumerate}
\end{lemma}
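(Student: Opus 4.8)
The plan is to reduce both squares to statements about the underlying $F$-local systems on $S(\co)$, resp.\ $S'(\co)$, and then to the defining covers, where they become elementary linear algebra. First I would note that every arrow in both diagrams is a morphism of variations of Hodge structure: $\phi_{V_1},\phi_{V_2},\phi_V,\phi_{f^*V}$ by Lemma \ref{keylemma}, $\kappa_V$ by Construction \ref{canconbc}, $\mu_G^{\H}(\alpha)$ by construction of the canonical construction, and $(\alpha^*)^\vee$ and $H^1_B(\psi_V)$ because they arise by applying $H^1_B(-)$, respectively pullback along $f$, to morphisms of $S$- or $S'$-schemes. Since a morphism of variations of Hodge structure is determined by the underlying morphism of local systems, it suffices to check commutativity after forgetting Hodge filtrations; and a morphism of the relevant local systems may be checked on the cover $\mathfrak{X}\times(G(\A_f)/K)\to S(\co)$ of Construction \ref{cancon}, on which, by construction, each local system in sight is a constant sheaf with fibre a fixed vector space carrying a $G(\q)$-action and each map is a $G(\q)$-equivariant linear map between these fibres (and similarly with $G'$).

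For part (i), I would invoke the fibrewise description of $\phi_V$ contained in the proof of Lemma \ref{keylemma}: the fibre of $S_{K,V}(\co)\to S(\co)$ over a point is a real torus $V(\re)/\Lambda$ with $\Lambda\ot\q=V$, so $R^1p_*F_{S_{K,V}(\co)}$ is identified fibrewise with $\Hom(\Lambda,F)=V^\vee$, compatibly with the $G(\q)$-action, and this identification is $\phi_V^\vee$ on underlying local systems. A morphism $\alpha\colon V_1\to V_2$ induces on fibres the torus map $V_1(\re)/\Lambda_1\to V_2(\re)/\Lambda_2$ coming from $\alpha$, hence on $H^1$ the transpose $\Hom(\Lambda_2,F)\to\Hom(\Lambda_1,F)$, i.e.\ $\alpha^\vee\colon V_2^\vee\to V_1^\vee$; dualising, $(\alpha^*)^\vee$ becomes on fibres the map $\alpha\colon V_1\to V_2$, which is exactly the fibrewise description of $\mu_G^{\H}(\alpha)$. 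This proves the square when the chosen lattices satisfy $\alpha(L_1)\subseteq L_2$, so that the $S$-morphism $S_{K,V_1}\to S_{K,V_2}$ exists on the nose; the general case follows by the rescaling of Proposition \ref{pain!}, whose lattice-comparison isomorphisms are visibly compatible with the identifications above (and which only uses the identity $[m]_*=m\cdot\id$ on $H^1$ of an abelian variety). Alternatively, one could phrase part (i) as the naturality of the identification $R^\bullet p_*\circ\mu_{V\rti G}^{\H}=\mu_G^{\H}\circ H^\bullet(V,-)$ in the mixed Shimura datum $(V\rti G,\tilde{\mathfrak{X}})$, appealing to \cite{Wild}.

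For part (ii), I would begin from the base-change isomorphism $(S_{K,V})_{S'}\cong S'_{K',f^*V}$ of abelian $S'$-schemes from Lemma \ref{fibreprodprop}(ii), which underlies $\psi_V$, together with the isomorphism $\xi$ of Remark \ref{xi} (proper base change) identifying $f^*R^1p_*F$ with $R^1p'_*F$; on fibre tori the latter is the identity of $V(\re)/\Lambda$, while on the Hodge side $\kappa_V$ is fibrewise the identity of $V$ (Construction \ref{canconbc}). Passing to the covers, all four arrows of the square then become the identity on the ambient copy of $V$, resp.\ $V^\vee$, so it commutes. I expect the only genuine difficulty in writing this out to be bookkeeping — keeping the dualisations consistent, and threading $\xi$ through so that $H^1_B(\psi_V)$ genuinely has the drawn source $f^*H^1_B(S_{K,V}(\co))^\vee$ and target $H^1_B(S'_{K',f^*V}(\co))^\vee$; there is no conceptual obstacle beyond the naturality of the construction behind Lemma \ref{keylemma}, which is the one input I would take from \cite{Wild} rather than reprove.
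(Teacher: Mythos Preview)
Your approach is genuinely different from the paper's. You work directly with the topological description: the fibre of $S_{K,V}(\co)\to S(\co)$ is a real torus $V(\re)/\Lambda$, $H^1$ of a torus is $\Hom(\Lambda,F)$, and the squares can be checked on the cover $\mathfrak{X}\times G(\A_f)/K$ where all local systems trivialise and every map becomes an explicit linear map between fixed vector spaces. The paper instead runs a Tannakian argument: it restricts to a connected component, takes Tannaka duals $P_{i,x_i},G_y$ of the categories $p_i\UVar/S_{K,V_i}^0$ and $\VHS'/S^0$, identifies $R^jp_{i,*}$ with group cohomology $H^j(V_{i,x_i},-)$ under these equivalences, and then invokes Wildeshaus's theorem that the comparison map $V_{i,x_i}\to V_i$ is an isomorphism to transport the question to a purely group-cohomological statement about $H^1(-,\q)$, which is then formal. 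Your route is shorter and more concrete; the paper's route stays entirely inside the framework in which $\phi_V$ was \emph{defined} (namely via \cite[Thm.~II.2.3, Prop.~I.1.6c)]{Wild}), so it never has to match that abstract isomorphism against an explicit fibrewise description.

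That matching is the one place where your argument is loose. You say the fibrewise description of $\phi_V$ is ``contained in the proof of Lemma~\ref{keylemma}'', but that proof gives no such description: it cites Wildeshaus as a black box for the identification $R^1p_*\circ\mu_{V\rti G}^{\H}\cong\mu_G^{\H}\circ H^1(V,-)$. To make your direct argument complete you would have to open that box far enough to see that Wildeshaus's isomorphism, restricted to a single fibre, really is the standard $H^1(V(\re)/\Lambda,F)\cong\Hom(\Lambda,F)\cong V^\vee$, naturally in $V$ and compatibly with monodromy. This is true, but it is not free, and it is exactly what the paper's Tannakian detour is designed to avoid. Your closing ``alternatively'' --- phrasing part~(i) as naturality of $R^\bullet p_*\circ\mu_{V\rti G}^{\H}=\mu_G^{\H}\circ H^\bullet(V,-)$ in the mixed datum and appealing to \cite{Wild} --- is much closer to what the paper actually does, and would be the cleanest way to close the gap if you prefer not to reproduce the full Tannakian machinery.
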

\begin{proof}
	We prove the first case, the other is similar. The strategy is to reduce to a group theoretic context via a Tannakian argument using work of Wildeshaus. Fix a connected component $S^0$ of $S(\co)$ and let $S_{K,V_i}^0$ denote the connected component $p_i^{-1}(S^0)$. In \cite[Thm.\ II.2.2]{Wild} it is checked that the canonical construction produces variations of Hodge structure which are admissible in the sense of \cite{Kashiwara}. Since the $V_i$ are unipotent, objects in the image of $\mu^\H_{V_i\rti G}$ (in the notation used in the proof of Lemma \ref{keylemma}) admit a filtration by objects pulled back from $S^0$. Let $\VHS'/S^0$ denote the category of admissible variations of Hodge structure on $S^0$ and $p_i\UVar/S_{K,V_i}^0$ denote the full subcategory of $\VHS'/S_{K,V_i}^0$ whose objects admit a filtration for which the graded objects are pulled back from elements of $\VHS'/S^0$. The functors $\mu_G^{\mathcal{H}},\mu^\H_{V_i\rti G}$ take values in these categories.
	
	Fix $y\in S^0$ and for $i=1,2$ set $x_i=\iota_i(y)$, where $\iota_i$ denotes the canonical Levi section. For $i=1,2$, let $P_{i,x_i}$ denote the Tannaka dual of $p_i\UVar/S_{K,V_i}^0$ and $G_y$ the Tannaka dual of $\VHS'/S^0$ all with the obvious fibre functors. The map $P_{i,x_i}\to G_y$ induced by $p_i^*$ is surjective (e.g.\ \cite[Prop.\ 2.21a)]{DeligneMilne}). Lastly, set $V_{i,x_i}=\ker (P_{i,x_i}\to G_{y} )$.
	
	Consider the diagram:
\[
		\begin{tikzcd}[column sep=tiny]
			p_1\UVar/S_{K,V_1}^0\ar{dr}[swap]{R^jp_{1,*}}  &  & \ar{ll}[swap]{\alpha^*}p_2\UVar/S_{K,V_2}^0 \ar{dl}{R^jp_{2,*}} \\
			& \VHS'/S^0&
		\end{tikzcd}
\]
	This does not commute, but there is an obvious natural transformation $R^jp_{2,*}\implies R^jp_{1,*}\alpha^*$. The calculation of higher direct images in $p_i\UVar/S_{K,V_i}$ coincides with the usual higher direct image as elements of $\VHS_F/S_{K,V_i}^0$ (cf.\ \cite[Sec.\ I.4]{Wild}). The maps $R^jp_{i,*}$ are not $\ot$-functors, but we claim that when viewed in the Tannakian setting, the above triangle becomes:
\[		\begin{tikzcd}[column sep=tiny]
			\Rep(P_{1,x_1}) \ar{dr}[swap]{H^j(V_{1,x_1},-)}  &  & \ar{ll}[swap]{\alpha^*}\Rep(P_{2,x_2}) \ar{dl}{H^j(V_{2,x_2},-)} \\
			& \Rep(G_y)&
		\end{tikzcd}\]
	and the natural transformation becomes the usual map \[H^j(V_{2,x_2},-)\implies H^j(V_{1,x_1}, \alpha^*(-)).\] To see this, note that $p_{i}^*$ corresponds to inflation from $G_y$ and has right adjoint $p_{i,*}$, whilst $(-)^{V_{i,x_i}}$ is right adjoint to inflation. 
	
	Since the canonical construction is a $\ot$-functor, after taking duals we obtain a diagram of short exact sequences:
\begin{equation}
		\begin{tikzcd}
			0 \ar{r}& V_{i,x_i}\ar{r}  \ar[dashed]{d}&P_{i,x_i}\ar{r} \ar{d}{t_i}& G_y\ar{r} \ar{d}{r}& 1\\
			0 \ar{r} & V_{i} \ar{r} & P_i \ar{r} & G \ar{r}& 1 
		\end{tikzcd} \label{fundamdiag}
\end{equation}
	where $t_i$ is the dual of $\mu_{V_i\rti G}^\H$ and $r$ the dual of $\mu_G^{\mathcal{H}}$. Moreover, the left vertical map $V_{i,x_i}\to V_i$ is an isomorphism \cite[p.\ 96]{Wild} (this would not be true without restricting to admissible variations of Hodge structure). This shows that the following square commutes:
\begin{equation}
		\begin{tikzcd}
			\Rep(P_i) \ar{r}{t_i^*} \ar{d}[swap]{H^1(V_i,-)} & \Rep(P_{i,x}) \ar{d}{H^1(V_{i,x_i},-)} \\
			\Rep(G) \ar{r}[swap]{r^*}& \Rep(G_y)
		\end{tikzcd}\label{squares}
\end{equation}
	as in the proof of Lemma \ref{keylemma}. In the case of the trivial representation $\q$, this yields maps $r^*H^1(V_i,\q)\to H^1(V_{i,x_i},\q)$ which are dual to $\phi_{V_i}$. Since the diagrams of \eqref{fundamdiag} are compatible with $\alpha^*$, the squares of \eqref{squares} form a prism:
	\[  \begin{tikzcd}[column sep = tiny, row sep=small]
	& \Rep(P_2) \ar[start anchor = {[xshift=-0.5ex,yshift=0.5ex]} ]{dl}[swap]{\alpha^*}  \ar{dddl} \ar{rr}& & \Rep(P_{2,x_2}) \ar[start anchor = {[xshift=-0.5ex,yshift=0.5ex]} ]{dl}[swap]{\alpha^*} \ar{dddl}\\
			\Rep(P_1) \ar[rr,crossing over] \ar{dd} & &  \Rep(P_{1,x}) \ar{dd} &  \\
			 & \\
\Rep(G) \ar{rr}& &\Rep(G_y) &
	\end{tikzcd}   \]
	A purely group theoretic argument now checks that, consequently, there is a commutative square:
\[		\begin{tikzcd}
			H^1(V_{1,x_1},\q)    &\ar{l} r^*H^1(V_1,\q) \\
			H^1(V_{2,x_2},\q) \ar{u}{\alpha^*} &\ar{l} r^*H^1(V_2,\q) \ar{u}[swap]{r^*\alpha^*}
		\end{tikzcd}\]
	Taking Tannaka and linear duals we now obtain the square in {\it i)}.
\end{proof}
\label{compatlift}
We are now able to prove Theorem \ref{intromain} of the introduction.
\begin{theorem}\label{tildenat}
	Let $(G,h)$ be an arbitrary Shimura datum and $K\le G(\A_f)$ neat open compact. Denote by $S$ the Shimura variety $\Sh_K(G,h)$. Then the following diagram commutes,
\[		\begin{tikzcd}[column sep=tiny]
			\Rep_F(G)^\AV \ar{rr}{\mu_G^\mot} \ar{dr}[swap]{\mu_G^\H} && \CHM_F/S \ar{dl}{H^\bullet_B} \ar[dll, phantom, "\implies" rotate=-155, near start, start anchor={[xshift=-2ex]}, end anchor= north ] \\
			{}& \VHS_F/S(\co)
		\end{tikzcd}\]
	up to natural isomorphism given by $ \phi \colon H^\bullet_B\circ \mu_G^{\mot} \implies \mu_G^{\mathcal{H}}$ (where $\phi$ is as in Notation \ref{phidef}). Moreover, under pullback by $f\colon (G',\mathfrak{X}')\to (G,\mathfrak{X})$, the triangles for $(G,\mathfrak{X}),(G',\mathfrak{X}')$ form a commutative prism:
\[		\begin{tikzcd}[column sep=tiny]
			\Rep_F(G)^\AV \ar{dd}[swap]{f^*} \ar{dr}[swap]{\mu_G^{\mathcal{H}}} \ar{rr}{\mu_G^\mot} && \CHM_F/S\ar{dd}{f^*} \ar{dl}{H^\bullet_B}\\
			& \VHS_F/S(\co)  & \\
			\Rep_F(G')^\AV \ar{dr}[swap]{\mu_{G'}^{\mathcal{H}}}\ar[rr,"\mu_{G'}^\mot" near start] && \CHM_F/S' \ar{dl}{H_B^\bullet}\\
			& \VHS_F/S'(\co) \arrow[ uu,crossing over, swap, "f^*" near end, leftarrow] & 
		\end{tikzcd}\]
	for which each face has a given natural transformation, all of which are compatible.
\end{theorem}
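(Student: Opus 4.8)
The plan is to assemble the statement from the three compatibility results already proven, organised around the natural isomorphism $\phi$ of Notation \ref{phidef}. First I would establish the front triangle: by Lemma \ref{keylemma}, for each $V\in \Rep_F(G)^\AV$ there is a canonical identification $H^1_B(S_{K,V}(\co))^\vee \cong \mu_G^\H(V)$, and by Lemma \ref{Hodge realisation} the relative Hodge realisation sends $\mu_G^\mot(V) = h^1(S_{K,V})^\vee$ to $R^1p_*F_{S_{K,V}(\co)}^\vee = H^1_B(S_{K,V}(\co))^\vee$ (using Remark \ref{faithful}, that $H^\bullet_B(h^i(X)) = H^i_B(X(\co))$ for abelian varieties). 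Composing gives $\phi_V$. To see $\phi = (\phi_V)_V$ is a natural transformation $H^\bullet_B\circ\mu_G^\mot \implies \mu_G^\H$, I would take $\alpha\colon V_1\to V_2$ in $\Rep_F(G)^\AV$ and check that the naturality square commutes. Unwinding $\mu_G^\mot(\alpha)$ (which involves a rescaling by $1/n$ and the isogeny $\pi_n$), the square reduces via Theorem \ref{denmurre} to the corresponding square for an actual morphism of abelian varieties $S_{K,V_1}\to S_{K,V_2}$, and then $H^\bullet_B$ applied to $h^1(-)^\vee$ of this map, compared against $\mu_G^\H(\alpha)$ under the identifications of Lemma \ref{keylemma}, is exactly the content of Lemma \ref{commutativity} i). So $\phi$ is a natural isomorphism (each $\phi_V$ is invertible since $H^\bullet_B$ is faithful on the relevant Hom-groups and $\phi_V$ is an isomorphism of VHS). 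That $\phi$ is monoidal follows from Lemma \ref{fibreprodprop} i) together with the Künneth formula (Theorem \ref{kunneth}) and the fact that all of $\mu_G^\mot$, $\mu_G^\H$, $H^\bullet_B$ are $\ot$-functors.

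For the prism, I would label the four new faces. The left face is Construction \ref{canconbc}, giving $\kappa\colon f^*\circ\mu_G^\H \implies \mu_{G'}^\H\circ f^*$. The right face is Lemma \ref{bccompateasy}, giving $\psi\colon f^*\circ\mu_G^\mot\implies \mu_{G'}^\mot\circ f^*$ (built from Lemma \ref{fibreprodprop} ii) and the compatibility of the Deninger--Murre projectors with pullback). The top and bottom faces are the triangles for $(G,\mathfrak{X})$ and $(G',\mathfrak{X}')$ respectively, each carrying its $\phi$, resp.\ $\phi'$. The remaining face, relating $H^\bullet_B$ before and after $f^*$, carries the base change isomorphism $\xi$ of Remark \ref{xi} (the relative Hodge realisation is natural in $S$). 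The assertion "all compatible" then means: going around the prism, the composite of $\xi$, $H^\bullet_B(\psi)$, $\phi'$ on one route equals the composite of $f^*(\phi)$, $\kappa$ on the other. Concretely, I would verify the identity of natural transformations $f^*\circ H^\bullet_B\circ\mu_G^\mot \implies \mu_{G'}^\H\circ f^*$, and this is precisely Lemma \ref{commutativity} ii), which states that for each $V$ the square with vertices $f^*H^1_B(S_{K,V}(\co))^\vee$, $f^*\mu_G^\H(V)$, $H^1_B(S'_{K',f^*V}(\co))^\vee$, $\mu_{G'}^\H(f^*V)$, with edges $f^*(\phi_V)$, $\kappa_V$, $H^1_B(\psi_V)$, $\phi_{f^*V}$, commutes.

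So the proof is essentially bookkeeping: identify each of the six faces of the prism with a previously-established natural isomorphism, and cite Lemma \ref{commutativity} i) and ii) for commutativity of the two nontrivial squares. I would also note at the outset that Proposition \ref{pain!} guarantees $\mu_G^\mot$ is a well-defined $\ot$-functor independent of lattice choices up to canonical isomorphism, so the statement does not depend on the auxiliary lattices; one should fix compatible lattices (take $f^*L$ on the source for the representation $f^*V$ when $V$ has lattice $L$) to avoid trailing the $\psi$-maps of Proposition \ref{pain!} through every diagram, or else absorb them as in the proof of Lemma \ref{bccompateasy}.

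The main obstacle is not in this theorem at all — it has been front-loaded into Lemma \ref{commutativity}, whose proof is the genuinely hard part (the Tannakian reduction using Wildeshaus's results to pass from higher direct images of admissible unipotent variations to group cohomology, and the identification $V_{i,x_i}\xrightarrow{\sim}V_i$ of the unipotent radicals). Granting Lemma \ref{commutativity}, the only mild subtlety here is checking that the rescaling-by-$1/n$ in the definition of $\mu_G^\mot$ on morphisms is harmless at the level of Betti realisations — i.e.\ that $H^\bullet_B$ intertwines $h^1([n])^\vee$ with multiplication by $n$ — which is immediate from Theorem \ref{denmurre} since the Deninger--Murre decomposition is characterised by its behaviour under $[n]$ and is preserved by all standard realisations.
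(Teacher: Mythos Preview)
Your proposal is correct and matches the paper's argument almost exactly: Lemma \ref{commutativity} {\it i)} gives the triangle, the three rectangular faces are filled by $\psi$ (Lemma \ref{bccompateasy}), $\kappa$ (Construction \ref{canconbc}), and $\xi$ (Remark \ref{xi}), and Lemma \ref{commutativity} {\it ii)} supplies the single coherence condition for the prism. The only visible difference is that the paper inserts an appeal to O'Sullivan's Theorem \ref{O'Sullivan} (via Remark \ref{O'Sullivanbc}) to first reduce the compatibility check to homological motives, whereas you go straight to $\VHS_F/S'(\co)$; this is harmless here since the pasted coherence equation already lives in $\VHS_F/S'(\co)$ (the rear face $\psi$ only enters through $H^\bullet_B(\psi)$), so the reduction step is not doing essential work in this particular theorem.
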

\begin{proof}  That $ \phi_V$ defines a natural isomorphism for the first triangle is Lemma \ref{commutativity} {\it i)}. The commutativity of the other individual faces in the prism is given by the natural isomorphisms: $\psi$ of Lemma \ref{bccompateasy} for the rear face, $\kappa$ of Construction \ref{canconbc} for the front left face, and $\xi$ of Remark \ref{xi} for the front right.
	
	Due to O'Sullivan's Theorem \ref{O'Sullivan} (cf.\ Remark \ref{O'Sullivanbc}), we need only prove the compatibility statement for homological motives. 
	As a result, we reduce to showing that the two natural isomorphisms $H^\bullet_B\circ f^* \circ \mu_G^\mot\implies H_B^\bullet \circ\mu_{G'}^\mot\circ f^*$ (which are functors from $\Rep_F(G)\to \VHS_F/S'(\co)$) defined by
	\begin{align*}
	& f^*H^1_B(S_{K,V}(\co))^\vee \overset{H^1_B(\psi_V)}{\to} H^1_B((S'_{K',f^*V})(\co))^\vee, \\
	&f^*H^1_B(S_{K,V}(\co))^\vee \overset{f^*{\phi}_V}{\to} \mu_G^{\mathcal{H}}(V)_{S'(\co)} \overset{\kappa_V^{-1}}{\to} \mu_{G'}^{\mathcal{H}}(f^*V)\overset{\phi_{f^*V}^{-1}}{\to }H^1_B((S'_{K',f^*V})(\co))^\vee ,
	\end{align*}
	coincide, here $\kappa$ is as defined in Construction \ref{canconbc} and $\psi$ is as defined in Lemma \ref{bccompateasy}. This follows from Lemma \ref{commutativity} {\it ii)}.
\end{proof}

\section{Classification of PEL-data} \label{sec:class}
In the case of PEL-type Shimura data, significantly stronger results than Theorem \ref{tildenat} are possible. In this section, we provide a classification of PEL-type Shimura data after base change to $\re$.
\begin{notat} Given an algebra $B/\q$, we write $B_F$ for $B\ot_\q F$. Similarly if $W$ is a $B$-module, then $W_F$ denotes $W\ot_\q F$.
\end{notat}
\begin{definition}	\label{PEL} A \emph{PEL-datum} is a tuple $(B,*,V,\la \ , \ \ra,h)$ consisting of: a semi-simple $\q$-algebra $B$ with a positive (anti-)involution $*$ on $B$, that is an anti-commutative involution such that $\tr_{B_\re/\re}(bb^*)>0$ for all $0\ne b \in B_\re$, together with a finite dimensional $B$-module $V$ equipped with an alternating non-degenerate $\q$-valued pairing $\la \ , \ \ra$ on $V$ such that, for $b \in B, u,v\in V$
	\[  \la bu,v\ra =\la u ,b^*v\ra, \]
	and finally a choice of $\re$-algebra homomorphism $h \colon \co \to \End_{B_\re}(V_\re)$ such that
	\[\begin{cases}\la h(z)u, v \ra =\la u , h(\bar z) v\ra   \quad \forall z\in \co,\  u,v \in V  \\
	\la u,h(i)u\ra \textrm{ is positive definite} \end{cases},  \]
	(the first condition ensures that $\la u ,h(i)v\ra$ is symmetric).
	
	Let $G$ be the algebraic group whose $R$-points, for any $\q$-algebra $R$, are defined by
\[  {G}(R)= \left\{ g \in \Aut_{B_R}(V_R) \ \middle| \ \parbox{7.5cm}{$\exists \, \mu(g) \in R\tii$ such that  $\la gu ,gv \ra = \mu(g) \la u,v\ra$ for all $u,v\in V\ot R$} \right\} .  \]
Note $G$ is connected if and only if $G$ has no factors of ``orthogonal type'' (see Lemma \ref{PELsatisfySV5}). For $z \in \co\tii$, we automatically have that $h(z) \in G(\re)$. We also denote by $h$ the induced map $\S\to G_\re$.
\end{definition}
\begin{notat}
	Any semisimple $\re$-algebra with positive involution splits as a product of simple factors each of which is of one of the following types (see for example \cite[p.\ 386]{Kottwitz}):
	\begin{itemize}
		\item \emph{symplectic}: $(\M_{n}(\re), A\mapsto A^t)$
		\item \emph{linear}: $(\M_{n}(\co),A\mapsto \bar A^t)$, where $(\bar-)$ denotes coefficientwise complex conjugation. 
		\item \emph{orthogonal}: $(\M_{n}(\mathbb{H}), A\mapsto \bar A^t )$, where $(\bar-)$ denotes the (anti-)involution $a+bi+cj+dij\mapsto a-bi-cj-dij$ coefficientwise.
	\end{itemize}\label{Bdecomp}
In particular, all symplectic $B_\re$-modules split as an orthogonal direct sum of submodules only acted on non-trivially by a single simple factor of one of the above types, and $G_{1,\re}$ splits accordingly. 
\end{notat}
\begin{notat}
	Given an algebraic group $G$, we denote by $G^\circ$ the connected component of the identity. We define the following algebraic groups over $\re$:
\begin{itemize}
	\item Let $\U_{a,b}$ be the indefinite unitary group whose $\re$-points consist of elements of $M_{a+b}(\co)$ which preserve a Hermitian form of signature $(a,b)$. There is an obvious isomorphism $\U_{a,b}\cong \U_{b,a}$ and $(\U_{a,b})_\co\cong \GL_{a+b,\co}$.
	\item Set $J=\left(\begin{array}{c c}0&-1\\1 &0 \end{array} \right)$ and let $\O_{2n}^*$ be the algebraic group defined by
	\[ \O_{2n}^*(A)=\left\{g \in \O_{2n}(A\ot \co) \ \ \middle | \ \ \bar g^t\left(\begin{array}{c c c}J & & \\ & \ddots &\\
	& &   J \end{array} \right) g=\left(\begin{array}{c c c}J & & \\ & \ddots &\\
	& & J \end{array} \right)   \right\}, \]
	for an $\re$-algebra $A$. Note that $(\O_{2n}^*)_\co \cong \O_{2n}$.
\end{itemize}
\end{notat}
The following is well-known, but we have been unable to reference explicitly in the literature.
\begin{lemma}\label{PELclass}Let $(B,*,V,\la \ , \ \ra , h)$ be a PEL-datum.
	\begin{enumerate}[i)]
		\item Fix a	decomposition of $(B_\re,*)$ into factors of symplectic, linear and orthogonal types respectively, as in Definition \ref{Bdecomp}, then
		 \[  G_{1,\re} \cong \prod_i \Sp_{2g_i} \ti \prod_j \U_{a_j,b_j} \ti \prod_{k} \O^*_{2r_k},\]
		 with each factor acting on the factor of $V_\re$ for which the action of $B_\re$ factors through the corresponding $\M_n(\re)$, $\M_n(\co)$ or $\M_n(\HH)$. 
		\item If $G_{1,\re}$ has no factors isomorphic to $\U_{n,0}$ for $n\ge 2$, then $(G^\circ,h)$ defines a Shimura datum. In particular, if $G_{1,\re}$ additionally has no factors of orthogonal type, then $(G,h)$ is a Shimura datum.
		\item In any case, the identity connected component of the centre of $G^\circ$ is an almost-direct product of a $\q$-split torus and an $\re$-anisotropic torus.
	\end{enumerate}\label{PELsatisfySV5}\todo{}
\end{lemma}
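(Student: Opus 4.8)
For \textit{i)} the plan is to reduce, by Morita theory, to the classification of Hermitian‑type forms over $\re$, $\co$ and $\HH$. Using Notation~\ref{Bdecomp} write $(B_\re,*)=\prod_\alpha(B_\alpha,*)$ as a product of simple factors of symplectic, linear or orthogonal type and decompose $V_\re=\bigoplus_\alpha V_\alpha$ accordingly; the identity $\langle bu,v\rangle=\langle u,b^*v\rangle$ forces the $V_\alpha$ to be pairwise orthogonal for $\langle\ ,\ \rangle$, so $G_{1,\re}=\prod_\alpha G_{1,\re}^{(\alpha)}$, where $G_{1,\re}^{(\alpha)}$ is the isometry group of $(V_\alpha,\langle\ ,\ \rangle|_{V_\alpha})$ (here $G_1=\ker\mu$, i.e.\ isometries rather than similitudes of $(V,\langle\ ,\ \rangle)$). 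For each $\alpha$, Morita equivalence identifies $V_\alpha$ with $D_\alpha^{\oplus m_\alpha}$ for $D_\alpha\in\{\re,\co,\HH\}$ and transports $\langle\ ,\ \rangle|_{V_\alpha}$, together with $*$, to a non‑degenerate $D_\alpha$‑valued form which is symmetric, Hermitian, or skew‑Hermitian in the symplectic, linear or orthogonal case; by Sylvester's law of inertia such a form is determined by a signature, and its isometry group is $\Sp_{2g}$, $\U_{a,b}$ (with $(a,b)$ the signature of the Hermitian form), or $\O^*_{2r}$ respectively. I would keep the Morita reduction explicit and cite \cite{Kottwitz} for the final identification of the three groups, giving the promised self‑contained account.

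For \textit{ii)} the plan is to verify Deligne's conditions (SV1)--(SV3) for $(G^\circ,h)$. Since $\S$ is connected, $h$ has image in $G^\circ_\re$, so $(G^\circ,h)$ makes sense. By construction $h$ endows $V_\re$ with a Hodge structure of type $\{(-1,0),(0,-1)\}$, so $\ad\circ h$ endows $\Lie G\subseteq\End(V)$ with one of type $\{(-1,1),(0,0),(1,-1)\}$: this is (SV1). For (SV2), the form $\psi:=\langle\,\cdot\,,h(i)\,\cdot\,\rangle$ is symmetric and positive definite by hypothesis, so the involution of $\GL(V_\re)$ sending $g$ to the inverse of its $\psi$‑adjoint is a Cartan involution; since the $\langle\ ,\ \rangle$‑adjoint of any $g\in G_1$ equals $g^{-1}$, this involution restricts on $G_1$ to $\ad(h(i))$, and because $G_1$ is reductive and $\ad(h(i))$‑stable it follows that $\ad(h(i))$ is a Cartan involution of $G_1$, hence of $(G^\circ)^{\ad}=G_1^{\ad}$. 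For (SV3), feed in \textit{i)}: $(G^\circ)^{\ad}_\re$ is the product of the adjoint groups of the factors found in \textit{i)} (those of $\U_{1,0}$ and $\O^*_2$ being trivial), and none of $\Sp_{2g}$ $(g\ge1)$, $\U_{a,b}$ $(a,b\ge1)$, $\O^*_{2r}$ $(r\ge2)$ has compact adjoint group, so the only non‑trivial compact factor that could occur is the adjoint group of some $\U_{n,0}$ with $n\ge2$ --- excluded by hypothesis. Hence no $\q$‑simple factor of $(G^\circ)^{\ad}$ has compact real points, which is (SV3). The ``in particular'' clause follows since without orthogonal factors $G$ is connected (as noted after Definition~\ref{PEL}), so $G^\circ=G$.

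For \textit{iii)}, the plan is to exhibit $Z(G^\circ)^\circ$ as a quotient of a torus ``of CM type'' and then quote the standard fact that such tori, and all of their subtori and quotients, are almost‑direct products of a $\q$‑split torus and an $\re$‑anisotropic torus. Now $Z(G^\circ)^\circ$ is the radical of $G^\circ$, hence a quotient of $\G_m$ (scalars) times the radicals of the $\q$‑rational factors $G_1^{(\alpha)}$ of $G_1$; each of these is the isometry group of a Hermitian or skew‑Hermitian form over the simple $\q$‑algebra $B_\alpha$, and --- crucially --- since positivity of $\tr_{B_\re/\re}(bb^*)$ prevents $*$ from interchanging any two simple factors of $B$, Albert's classification (cf.\ \cite{Kottwitz}) forces the centre of each $B_\alpha$ to be either totally real or a CM field. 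Consequently the radical of $G_1^{(\alpha)}$ is trivial (symplectic factors, and orthogonal factors of rank $\ge 2$), the norm‑one torus of that CM field (linear/unitary factors), or --- for a rank‑one orthogonal factor --- the norm‑one torus of the CM quadratic extension of the (totally real) centre cut out by the skew‑Hermitian form; in each case it is $\re$‑anisotropic, hence of CM type, and the claim follows.

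The main obstacle is \textit{i)}: carrying out the Morita reduction correctly for all three types, in particular producing the non‑connected group $\O^*_{2r}$ in the orthogonal case --- which is exactly why \textit{ii)} must be stated for $G^\circ$ --- and tracking which real factors are compact, so that excluding $\U_{n,0}$ with $n\ge 2$ is recognised as precisely what (SV3) requires. Granting \textit{i)}, parts \textit{ii)} and \textit{iii)} are bookkeeping together with the classical inputs recorded above (the Cartan‑involution criterion and Albert's theorem).
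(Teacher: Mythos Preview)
Your approach is broadly correct and, for \textit{i)} and \textit{ii)}, parallels the paper's. For \textit{i)} both you and the paper reduce via Morita equivalence to $D_\alpha\in\{\re,\co,\HH\}$; the paper then invokes Kottwitz's lemma that symplectic $(C,*)$-modules are determined by their $C\otimes_\re\co$-module structure and writes down an explicit building-block triple for each type, whereas you classify the transported $D_\alpha$-valued forms directly. One slip: in the symplectic case the Morita-transported form is \emph{skew}-symmetric (alternating), not symmetric, and Sylvester's law is irrelevant there---non-degenerate alternating $\re$-forms are determined by dimension alone, which is why no signature enters and one gets $\Sp_{2g}$. For \textit{ii)} the paper simply cites \cite[Lem.~4.1]{Kottwitz} for Deligne's axioms and then, as you do, reads off from \textit{i)} that the only possible compact adjoint factors are those of $\U_{n,0}$ with $n\ge 2$; your direct verification of (SV1)--(SV2) is correct but unnecessary given that citation.

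The genuine divergence is in \textit{iii)}. The paper works entirely over $\re$: since $\mu$ lands in $\G_m$, any $\re$-anisotropic subtorus of $Z(G^\circ)$ must lie in $Z(G_1^\circ)$, and then from the explicit list in \textit{i)} one sees $Z(G_{1,\re}^\circ)^\circ$ is a product of copies of $U_1$ (or trivial), hence $\re$-anisotropic; the $\q$-split piece is the scalar $\G_m$. Your route via Albert's classification and norm-one tori of CM fields works over $\q$ from the start and gives a finer description of the radical; it is more structural but needs more input (Albert's theorem) than the paper's two-line computation from \textit{i)}.
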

\begin{proof}
	These properties are well-known, but we provide proofs of the statements we have been unable to find references for. In \cite[Lem.\ 4.1]{Kottwitz} it is shown that $(G,h)$ satifies (1.5.1), (1.5.2) and (1.5.3) of \cite{travauxdeShimura}, even without the assumption of {\it ii)}. To show {\it ii)} it remains to show that $G^\ad$ has no factors of compact type under the above assumption. This and {\it iii)} will be easy to deduce from {\it i)}.

	In order to classify the factors of $G_{1,\re}$ which may arise it suffices to assume that $B_\re$ is simple of each type appearing in Definition \ref{Bdecomp}. Moreover, we are able to reduce to the case of $B_\re$ isomorphic to $\re$, $\co$ or $\mathbb{H}$ by an easy Morita equivalence argument.
	
	We shall make repeated use of the following result of Kottwitz: Let $(C,*)$ be an $\re$-algebra with positive involution and $(W,\la \ , \ \ra , h)$, $(W',\la \ , \ \ra' , h')$ be two triples that together with $(C,*)$ satisfy the conditions of Definition \ref{PEL} with $\re$ in place of $\q$. Then if $W$ and $W'$ are isomorphic as $C \ot_\re \co$-modules, with $\co$ acting via $h$ and $h'$ respectively, then $(W,\la  \ , \ \ra )$ and $(W' \la \ ,\ \ra')$ are isomorphic as symplectic $(C,*)$-modules \cite[Lemma 4.2]{Kottwitz}.
	
	First assume that $(B_\re,*)=(\re,*=\id)$. Then 
	\[\left(W=\re^{\op 2}, \la \ , \ \ra =\left(\begin{array}{rr}
	0 & 1 \\
	-1 & 0
	\end{array}\right), h(i)= \left(\begin{array}{rr}
	0 & -1 \\
	1 & 0
	\end{array}\right) \right)\] is a triple as above with corresponding $B_\re \ot_\re  \co$-module $\co$. As a result, any symplectic $(B_\re,*)$-module $V_\re$ must split as an orthogonal direct sum of terms isomorphic to $W$. By definition, $G_1(\re)$ for $W^{\op n}$ is $\Sp_{2n}$.
	
	Now assume that $(B_\re,*)=(\co,*=z \mapsto \bar z)$. In this case, $B_\re\ot_\re \co\cong \co\ti \co$ has two irreducible modules. The triple given by $(\co,  \tr_{\co/\re}(xi\bar{y}), h(i)= i)$ (resp.\ $(\co,  -\tr_{\co/\re}(xi\bar{y}), h(i)=-i)$) corresponds to the $\co\ot_\re \co$-summand on which the $\co$ actions agree (resp.\ disagree). So if we denote these modules by $A$ and $B$ respectively, then any $(B_\re,*)$-module is isomorphic to $A^{\op a}\op B^{\op b}$. For such a module, $G_1(\re)$ consists of elements of $\GL_n(\co)$ which also preserve a pairing of signature $(b,a)$. In other words, $G_{1,\re}$ is the indefinite unitary group $U_{b,a}$.
	
	Finally, in the quaternion case we shall assume that $(B_\re,*)=(\mathbb{H}^\opp,*)$ (with $\mathbb{H}^\opp$ an expositional choice). Then $B_\re\ot_\re \co \cong \MM_2(\co)$ has a unique non-trivial irreducible module, which is of $\re$-dimension 4. This is realised by the triple $(\mathbb{H},\tr_{\mathbb{H}/\re}(xj\tilde{y}),h(i)=j )$ where $\mathbb{H}^\opp$ acts by right multiplication and $y\mapsto \tilde{y}$ is the (anti-)involution given by $y=a+bi+cj+dij\mapsto a+bi-cj+dij$. As such, all symplectic $(\HH^\opp, *)$-modules are isomorphic to $\HH^{\op n}$ for some $n$. In this case, $\End_{\HH^\opp}(\HH^{\op n})\cong \M_n(\HH)$, with $\HH$ acting by left multiplication and taking adjoints with respect to the pairing coincides with $A\mapsto \tilde{A}^t$. If we embed $\HH\hookrightarrow M_2(\co)$ by $i\mapsto \left(\begin{array}{rr}
	i & 0 \\
	0 & -i
	\end{array}\right)$ and $j\mapsto \left(\begin{array}{rr}
	0 & 1 \\
	-1 & 0
	\end{array}\right)$ and extend this to an embedding $\M_n(\HH) \hookrightarrow \M_{2n}(\co)$, then matrix transposition restricts, on the image of $\M_n(\HH)$, to taking adjoints. As a result, we may view $G_1$ as the algebraic group whose $\re$-points consists of elements of $\O_{2n}(\co)$ which lie in the image of $\M_n(\HH)$. Since the image of $\M_n(\HH)$ consists of matrices $g$ for which $\bar g \diag(J,...,J)=\diag(J,...,J) g$, these are precisely elements of $\O^*_{2n}(\re)$.
	
	To deduce {\it ii)}, note that $G_1^\ad \cong G^\ad$ (indeed, the cokernel of $G_1^\ad\hookrightarrow G^\ad$ is a proper quotient of $\G_m$). From the above calculations we find that the only possible factors of $G_1^\ad$ of compact type are $U_{n,0}\cong U_{0,n}$ for $n\ge 2$. For {\it iii)}, first note that the largest anisotropic subtorus of $Z(G^\circ)$ must be contained in $Z(G_1^\circ)$. But from the above calculation $(Z(G_1^\circ)^\circ)_\re$ is always anisotropic. Indeed, $Z(\Sp_{2g})$ is finite whilst $Z(\U_{a,b}), Z(\SO_{2n}^*)\cong Z(\SO_2^*)$ are both isomorphic to $U_1$.
\end{proof}
	The factorisation of Lemma \ref{PELsatisfySV5} {\it i)} justifies the naming convention of Definition \ref{Bdecomp}.
	\begin{remark}
		In \cite{Kottwitz} Kottwitz, allows Shimura data to have (not necessarily connected) reductive groups $G$ of the form considered in Lemma \ref{PELsatisfySV5} when  $G_{1,\re}$ has no factors isomorphic to $U_{n,0}$ for $n\ge 2$. Ancona's results also hold in this generality and so ours will as well.
	\end{remark}
 \begin{definition}A Shimura datum $(G,h)$ which arises as in Lemma \ref{PELsatisfySV5} is said to be of \emph{PEL-type} and the corresponding $(B,*,V,\la \ , \ \ra,h)$ is said to be a \emph{PEL-datum} for $(G,h)$.
 
 If we fix such a PEL-datum for $(G,h)$, then we say that $V\in \Rep(G)$ is the \emph{standard representation} of $G$. Shimura data with a fixed choice of PEL-datum have an explicit moduli interpretation (see for example \cite[Sec.\ 8]{MilneSV}).\end{definition}
\begin{example}\label{twoPELdata}From the proof of Lemma \ref{PELsatisfySV5}, it is easy to see that a Shimura datum may admit multiple distinct PEL-data due to Morita equivalence. As an explicit example, consider the PEL-datum $\left(\q,*,\q^{\op 2},\left(\begin{array}{rr}
	0 & 1 \\
	-1 & 0
	\end{array}\right),h(a+bi)=\left(\begin{array}{rr}
	a & -b \\
	b & a
	\end{array}\right)\right)$, which corresponds to the usual modular curves Shimura datum $(\GL_2,\mathcal{H})$.
	
	There is also a PEL-datum $\left(\MM_2(\q),*=(-)^t, \q^{\op 4},\left(\begin{array}{r|r}
	0 & I_2 \\
	\hline
	-I_2 & 0
	\end{array}\right),h(a+bi) = \left(\begin{array}{r|r}
	aI_2 & -bI_2\\
	\hline
	bI_2 & aI_2
	\end{array}\right) \right)$, where $M_2(\q)$ acts diagonally on $\q^{\op 4} = \q^{\op 2}\op \q^{\op2}$. Then $G$ is isomorphic to $\GL_2$ embedded within $\GL_4(\q) $ via $ \left(\begin{array}{rr}
	a & b \\
	c & d
	\end{array}\right)\mapsto \left(\begin{array}{r|r}
	aI_2 & bI_2 \\
	\hline
	cI_2 & dI_2
	\end{array}\right)$, so that the associated Shimura datum is again $(\GL_2,\mathcal{H})$.
\end{example}

\section{Ancona's construction}\label{Ancconst}
In the case of PEL-type Shimura data, Ancona has described a lift of $\mu_G^{\mathcal{H}}$ defined on all of $\Rep_F(G)$ \cite{Anconapaper}. But, as defined, it is not immediately clear that it is well behaved with respect to pullbacks or is even independent of the choice of PEL-datum (cf.\ Example \ref{twoPELdata}). In this section we briefly recall Ancona's construction, but in the language of mixed Shimura varieties.

\begin{lemma}\label{DM}
	Given a Shimura datum $(G,h)$ with a choice of PEL-datum $(B,*,V,\la \ , \ \ra,h)$, then for all fields $F/\q$, all objects of $\Rep_F(G)$ are, up to isomorphism, direct summands of some space of the form $\bigoplus_{i=1}^k V_F^{\ot a_k}\ot V_F^{\ot b_k}$ (with $V_F= V\ot_\q F$).
\end{lemma}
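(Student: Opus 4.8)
The plan is to show that $\Rep_F(G)$ is generated as a tensor category by the standard representation $V_F$ and its dual, together with the similitude character, and then to absorb the similitude character by a twisting argument. First I would recall that $G$ is (a subgroup of finite index in) a reductive group sitting inside $\GL_{B_F}(V_F)\times \G_m$ via $g\mapsto (g,\mu(g))$; in particular the tautological inclusion $G\hookrightarrow \GL(V_F)$ is a faithful representation. By the standard reconstruction theorem for reductive groups (e.g.\ Deligne–Milne, \cite[Prop.\ 2.20 and Cor.\ 2.21]{DeligneMilne}), every object of $\Rep_F(G^\circ)$ is a subquotient — hence, by semisimplicity of $\Rep_F(G^\circ)$ in characteristic zero, a direct summand — of a finite direct sum of representations of the form $V_F^{\ot a}\ot V_F^{\vee\ot b}$. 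Since $\Rep_F(G)$ is a full subcategory of $\Rep_F(G^\circ)$ closed under the relevant operations (or one handles the component group directly, $G/G^\circ$ being a quotient of $\G_m$ and hence acting through a character that is itself a summand of $V_F\ot V_F^\vee$ via the similitude), the same holds over $G$.

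The remaining point is to replace the dual $V_F^\vee$ by $V_F$ itself. Here I would use the alternating non-degenerate pairing $\langle\ ,\ \rangle$ from the PEL-datum: it exhibits an isomorphism $V_F\xrightarrow{\sim} V_F^\vee\ot\q(\mu)$ of $G$-representations, where $\q(\mu)$ denotes the one-dimensional representation on which $G$ acts through the similitude character $\mu$. Thus $V_F^\vee\cong V_F\ot\q(\mu)^{-1}$, and consequently any $V_F^{\ot a}\ot V_F^{\vee\ot b}$ is isomorphic to $V_F^{\ot(a+b)}\ot\q(\mu)^{-b}$. It therefore suffices to observe that $\q(\mu)^{\pm1}$ is itself a direct summand of some tensor power of $V_F$ and $V_F^\vee$: indeed $\q(\mu)^{-1}$ appears in $\Lambda^2 V_F^\vee$ (or one uses $\wedge^{\dim V}V_F\cong \q(\mu)^{\dim V/2}$ up to a finite-order character, which for the connected similitude group is exactly a power of $\mu$), so that tensoring through by suitable powers reduces everything to sums of $V_F^{\ot a}\ot V_F^{\vee\ot b}$ with $V_F$ and $V_F^\vee$ — which is the claimed shape. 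Writing $a_k,b_k$ for the resulting exponents gives the statement.

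The main obstacle I anticipate is purely bookkeeping rather than conceptual: one must be careful about the component group of $G$ (the orthogonal-type case, where $G$ is disconnected), and about whether the top exterior power of $V_F$ really is an integral power of the similitude character or only so up to a character of finite order — in the latter case that finite-order character is again realised inside tensor powers of $V_F$ by semisimplicity, so nothing is lost, but it should be checked. The Morita-equivalence subtlety (that $V$ depends on the choice of PEL-datum) is irrelevant here, since the statement is made for a fixed PEL-datum. I expect the whole argument to be short once the pairing is invoked; the only place demanding care is making the reduction from "subquotient of a mixed tensor power" to "summand of $\bigoplus_k V_F^{\ot a_k}\ot V_F^{\ot b_k}$" genuinely explicit, which is where Lemma's hypotheses on $F$ and the semisimplicity of $\Rep_F(G)$ in characteristic zero do the work.
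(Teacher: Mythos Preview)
Your core argument is exactly the paper's: the paper's entire proof is the single line ``As $V$ is a faithful $G$-representation, this follows from the proof of \cite[Prop.~2.20]{DeligneMilne}.'' That proposition applies to any affine group scheme over a field with a faithful representation, so your worries about $G^\circ$ versus $G$ and the component group are unnecessary --- you can work with $G$ directly throughout.

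Everything after your first paragraph is extra, and in fact does not work. The statement as printed contains a typo: the second tensor factor should be $V_F^{\vee\otimes b_k}$, not $V_F^{\otimes b_k}$ (note that as written the summands collapse to $V_F^{\otimes(a_k+b_k)}$, and the lemma is invoked in Construction~\ref{Anconaconstr} with the dual present). Your attempt to remove the dual cannot succeed: the similitude character $\mu$ has weight $-2$, so $\mu^{-1}$ has weight $+2$, while every $V_F^{\otimes n}$ has weight $-n\le 0$; hence $\mu^{-1}$ is never a summand of a pure tensor power of $V_F$, and your reduction $V_F^{\otimes a}\otimes V_F^{\vee\otimes b}\cong V_F^{\otimes(a+b)}\otimes\mu^{-b}$ leaves you stuck. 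Drop the second half of the argument and you have precisely the paper's proof.
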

\begin{proof}
	As $V$ is a faithful $G$-representation, this follows from the proof of \cite[Prop.\ 2.20]{DeligneMilne}.
\end{proof}
\begin{theorem}[{\textrm{\cite[Prop.\ 8.5]{Anconapaper}}}] \label{AnconaEnd}
	Given a Shimura datum $(G,h)$ with a PEL-datum $(B,*,V,\la \ , \ \ra,h)$, let $K$ be a neat open compact subgroup of $G(\A_f)$ and $L$ a $\hat \z$-lattice of $V_F$ (considered as a representation over $\q$). Then for any $n \in \n$, there is a canonical inclusion of rings $a\colon  \End_{\Rep_F(G)}(V_F^{\ot n})\hookrightarrow \End_{\HomM_F/S}(h^1(S_{V_F,K})^{\vee \ot n})$ such that the diagram
\[		\begin{tikzcd}[column sep=tiny]
			\End_{\Rep_F(G)}(V_F^{\ot n})\ar{dr}[swap]{\mu_G^\H} \ar{rr}{a} && 	\End_{\HomM/S}(h^1(S_{V_F,K})^{\vee \ot n}) \ar{dl}{H^\bullet_B}  \\
			& \End_{\VHS/S(\co)}(\mu_G^{\mathcal{H}}(V_F)^{\ot n})
		\end{tikzcd}\]
	commutes. Here, we have used the isomorphism $ \phi_{V_F} \colon H^1_B((S_{V_F,K})(\co))^\vee\to \mu_G^{\mathcal{H}}(V_F)$ of Lemma \ref{keylemma} to identify $\End(\mu_G^{\mathcal{H}}(V_F)^{\ot n})$ and $\End(H^1((S_{V_F,K})(\co))^{\ot n})$.
\end{theorem}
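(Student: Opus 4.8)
The plan is to construct $a$ as the unique lift of $\mu_G^{\mathcal{H}}$ along the relative Hodge realisation, reducing everything to an explicit generators‑and‑relations description of $\End_{\Rep_F(G)}(V_F^{\ot n})$. Write $A:=S_{V_F,K}$, an abelian scheme over $S$ by Lemma~\ref{avforavreps}. By Lemma~\ref{keylemma} (with $\phi_{V_F}$ as in Notation~\ref{phidef}) and the fact that $H^\bullet_B$ and $\mu_G^{\mathcal{H}}$ are $\ot$-functors, one gets a canonical identification of $H^\bullet_B(h^1(A)^{\vee\ot n})$ with $\mu_G^{\mathcal{H}}(V_F^{\ot n})=\mu_G^{\mathcal{H}}(V_F)^{\ot n}$, under which both slanted arrows of the triangle take values in $\End_{\VHS_F/S(\co)}(\mu_G^{\mathcal{H}}(V_F)^{\ot n})$. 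It then suffices to prove: (a) $H^\bullet_B$ is \emph{injective} on $\End_{\HomM_F/S}(h^1(A)^{\vee\ot n})$; and (b) $\mu_G^{\mathcal{H}}$ carries $\End_{\Rep_F(G)}(V_F^{\ot n})$ into the image of this $H^\bullet_B$. Given these, set $a:=(H^\bullet_B)^{-1}\circ\mu_G^{\mathcal{H}}$ using the inverse on the image; (a) makes this well defined, it is automatically a ring homomorphism (apply the injective ring homomorphism $H^\bullet_B$ to both sides of the putative identities $a(\gamma\gamma')=a(\gamma)a(\gamma')$ and $a(\gamma+\gamma')=a(\gamma)+a(\gamma')$), it is injective since $\mu_G^{\mathcal{H}}$ is faithful on morphisms (a $G$-equivariant map inducing the zero map on the underlying local system vanishes on a stalk), it takes values in $\End_{\HomM_F/S}(h^1(A)^{\vee\ot n})$ by (b), it makes the triangle commute by construction, and it is canonical as it depends only on $\phi$, $\mu_G^{\mathcal{H}}$ and $H^\bullet_B$.

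Part (a) follows from the K\"unneth formula (Theorem~\ref{kunneth}): for $A^n:=A\ti_S\cdots\ti_S A$, the object $h^1(A)^{\ot n}$ is cut from the summand $h^n(A^n)$ of $h(A^n)$ by an idempotent correspondence, so $\End_{\HomM_F/S}(h^1(A)^{\ot n})$ is a ``corner'' $p\,\End_{\HomM_F/S}(h^n(A^n))\,p$, on which $H^\bullet_B$ is injective because it is injective on all of $\End_{\HomM_F/S}(h^n(A^n))$ by Remark~\ref{faithful}; injectivity is inherited by the corner, and is preserved under passing to the linear dual (a rigid $\ot$-functor commutes with $(-)^\vee$) and to Tate twists, which gives (a) for $h^1(A)^{\vee\ot n}$.

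Part (b) is the substantive step. Since $\mathrm{im}(H^\bullet_B)$ is a subring and $\mu_G^{\mathcal{H}}$ a ring homomorphism, it is enough to lift $\mu_G^{\mathcal{H}}(\gamma)$ for $\gamma$ ranging over a set of algebra generators of $\End_{\Rep_F(G)}(V_F^{\ot n})$. One takes as generators: (i) the symmetries permuting the $n$ tensor factors; (ii) for each slot $i$ and each $b\in B_F^{\opp}$, the operator acting on the $i$-th factor through the structural action $B_F^{\opp}\to\End_G(V_F)$; and (iii) for each pair of slots, the composite of the contraction and co-contraction along the symplectic form $\la\ ,\ \ra$. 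That these generate is the first fundamental theorem of invariant theory for the classical groups: after the Morita reduction of Lemma~\ref{PELclass} to the three simple types of Notation~\ref{Bdecomp}, $G$ is the automorphism group of $V_F$ equipped with $\la\ ,\ \ra$ and the $B_F$-action, and the FFT for the symplectic, (general) linear and orthogonal groups says precisely that the tensor invariants are generated by the structural morphisms. Each generator then lifts: (i) to the symmetry constraints (permutation correspondences) of $\HomM_F/S$, whose realisations match those of $\VHS_F/S(\co)$ since $H^\bullet_B$ is symmetric monoidal; (ii) to the endomorphism of $h^1(A)^\vee$ coming, after scaling $b$ to preserve $L$ (exactly as in the construction before Proposition~\ref{pain!}), from the isogeny of $A$ it induces, whose realisation is $\mu_G^{\mathcal{H}}(b)$ by Lemma~\ref{commutativity}~i); and (iii) to the morphisms between $h^1(A)^\vee(j)$ and $\one$ supplied by Theorem~\ref{DenMur} and the Poincar\'e pairing for the polarisation of $A$ attached to $\la\ ,\ \ra$, which realise via $\phi_{V_F}$ to the Hodge-theoretic polarisation of $\mu_G^{\mathcal{H}}(V_F)$, i.e.\ to $\mu_G^{\mathcal{H}}$ of the contraction and co-contraction once the similitude character is identified with the appropriate Tate object; the composite in (iii) is then an untwisted endomorphism.

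The two hardest points will both be inside (b). First, the generation statement for $\End_{\Rep_F(G)}(V_F^{\ot n})$ has to be established uniformly across the three PEL types of Lemma~\ref{PELclass}: it is classical, but in the unitary and orthogonal cases $V_F$ is not canonically self-dual and the similitude character must be carried through the invariant-theory argument. Second, the Tate-twist bookkeeping in (iii): one must check that the contraction-then-co-contraction composite is genuinely untwisted on both the motivic and the Hodge side and that the two agree under $\phi_{V_F}$, which comes down to the polarisation of $A$ being exactly the one induced by the PEL form $\la\ ,\ \ra$ — a fact built into the description of $A$ as the mixed Shimura variety $S_{V_F,K}$.
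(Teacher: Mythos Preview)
Your approach is essentially the same as the one the paper attributes to Ancona; note that the paper does not give its own proof here but cites \cite[Prop.\ 8.5]{Anconapaper} and summarises the strategy in the subsequent remark. Your steps (a) and (b) match that outline: lift the obvious generators (permutations, the $B$-action on single tensor slots, and the cycles coming from the polarisation), then invoke invariant theory to see these generate $\End_{\Rep_F(G)}(V_F^{\ot n})$. One small point worth tightening: in your item (iii) you package both the contraction and the co-contraction under ``Poincar\'e pairing for the polarisation'', but the paper's remark singles out \emph{Hard Lefschetz} alongside Poincar\'e duality precisely because the co-contraction direction $\one \to h^1(A)^{\vee\ot 2}(j)$ requires inverting a Lefschetz operator, which is only available motivically via the Hard Lefschetz theorem for abelian schemes; your closing paragraph flags this as a hard point but does not name the ingredient.
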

\begin{remark}
	Ancona's strategy is to lift endomorphisms of $V_F$ itself (in our presentation, this is via functoriality of mixed Shimura varieties) and permutations of $V_F^{\ot n}$ in the obvious way, and then additionally lift cycles arising from the polarisation via Poincar\'e duality and Hard Lefschetz (both of which have a motivic interpretation). Ancona then shows that endomorphisms of the above kinds generate all of $\End_{\Rep_F(G)}(V_F^{\ot n})$ in the case of PEL-type Shimura varieties. Whilst Ancona's result allows for Shimura data corresponding to orthogonal groups, the analogous statement does not hold for special orthogonal groups, which would require lifting additional cycles.
\end{remark}
\begin{constr}[{\cite[pf.\ of Thm.\ 8.6]{Anconapaper}}]\label{Anconaconstr}
	There is a $\ot$-functor $\Anc_G \colon \Rep_F(G)\to \HomM_F/S$ defined as follows: set $\Anc_G (V_F^{\ot n})=h^1(S_{V_F,K})^{\vee \ot n}$ and let $\Anc_G(\alpha)$ for $\alpha \in \End (V_F^{\ot n})$ be defined via the map of Theorem \ref{AnconaEnd}. By Hom-tensor adjunction, Theorem \ref{AnconaEnd} also defines a motivic lift of the map $\one \to V\ot V^\vee$. More generally, to define the image of elements of $\Hom(V_F^{\ot a}\ot V_F^{\vee \ot b},V_F^{\ot c} \ot V_F^{\vee \ot d})$ it suffices to fix the image of $\Hom(V_F^{\ot (a+ d)},V_F^{\ot (b+c) })$, but for weight reasons this is zero unless $a-b=c-d$, in which case it is covered by Theorem \ref{AnconaEnd}. 
	
	This also allows us to define, for any choice of idempotent $e$, the image of a direct summand $e\cdot (\bigoplus V_F^{\ot a_n}\ot V_F^{\vee \ot b_n})$. Since every element of $W\in \Rep_F(G)$ is of this form by Lemma \ref{DM}, if we pick a fixed isomorphism $\theta_W \colon W\overset{\sim}{\to} e_W\cdot (\bigoplus V_F^{\ot a_{W,n}}\ot V_F^{\vee \ot b_{W,n}})$ for each $W$, then we can compatibly extend $\Anc_G$ to all of $\Rep_F(G)$. Finally, by composition with the section of Theorem \ref{O'Sullivan}, we obtain a functor $\Rep_F(G)\to \CHM_F/S$, which we also denote $\Anc_G$.\todo{}
\end{constr}

\begin{lemma}\label{indep}
	The construction of $\Anc_G$ is, up to natural isomorphism, independent of all choices made.
\end{lemma}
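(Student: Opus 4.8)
The plan is to unwind the construction of $\Anc_G$ and check that each of the choices made is harmless up to canonical natural isomorphism. There are essentially four choices to track: (i) the $\hat\z$-lattice $L$ of $V_F$ used to form $S_{V_F,K}$; (ii) the presentation $\theta_W\colon W\xrightarrow{\sim} e_W\cdot(\bigoplus V_F^{\ot a_{W,n}}\ot V_F^{\vee\ot b_{W,n}})$ of each object $W$ as a summand of a sum of mixed tensor powers; (iii) the map $a$ of Theorem \ref{AnconaEnd}, i.e.\ the specific motivic lifts of endomorphisms chosen by Ancona (functoriality of mixed Shimura data, permutations, and the cycles coming from the polarisation); and (iv) the passage through O'Sullivan's section $\mathcal{I}$ of Theorem \ref{O'Sullivan} to land in $\CHM_F/S$ rather than $\HomM_F/S$.

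First I would dispatch the lattice choice exactly as in Proposition \ref{pain!}: two lattices $L_1,L_2$ of $V_F$ give abelian schemes $S_{V_F,K}^{(1)},S_{V_F,K}^{(2)}$ related by an isogeny (via functoriality of mixed Shimura varieties, using $nL_1\le L_2$), which becomes an isomorphism after applying $h^1(-)^\vee$; this yields a canonical isomorphism of $h^1(S_{V_F,K}^{(1)})^{\vee\ot n}$ with $h^1(S_{V_F,K}^{(2)})^{\vee\ot n}$ intertwining the two maps $a$ of Theorem \ref{AnconaEnd}, because all the generating cycles (endomorphisms of $V_F$, permutations, polarisation cycles) are defined intrinsically from $(B,*,V,\la\,,\,\ra,h)$ and commute with isogeny-induced isomorphisms. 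Next, for the presentation choice, note that if $\theta_W,\theta_W'$ are two presentations of $W$ then $\theta_W'\circ\theta_W^{-1}$ is an isomorphism between two summands of sums of mixed tensor powers of $V_F$; by Theorem \ref{AnconaEnd} (extended via Hom-tensor adjunction as in Construction \ref{Anconaconstr}) this isomorphism has a canonical motivic lift, and these lifts are compatible with composition since $a$ is a ring homomorphism. Declaring the component at $W$ of the natural isomorphism to be this lift, naturality in $W$ follows from functoriality of $\Anc_G$ on the subcategory of mixed tensor powers, which is built into Theorem \ref{AnconaEnd}.

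For the choice (iii) of lifts internal to Ancona's construction, the key point is that $\End_{\HomM_F/S}(h^1(S_{V_F,K})^{\vee\ot n})$ already receives an injection from $\End_{\Rep_F(G)}(V_F^{\ot n})$ with the stated compatibility with $H^\bullet_B$, and since $H^\bullet_B$ is faithful on $\Hom(h^i(X),h^i(Y))$ for $X,Y$ abelian varieties (Remark \ref{faithful}), there is at most one such injection; hence $a$ is uniquely determined, so there is nothing to choose. Finally, (iv) is handled by O'Sullivan: the section $\mathcal{I}$ of Theorem \ref{O'Sullivan} is itself unique subject to the stated conditions, so lifting the homological functor to $\CHM_F/S$ introduces no further ambiguity, and the natural isomorphisms constructed at the homological level lift uniquely to Chow-level natural isomorphisms because their components are symmetrically distinguished (being in the image of $\mathcal{I}$) and symmetrically distinguished cycles over a given numerical cycle are unique (Remark \ref{O'Sullivanbc}). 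Assembling these, one gets a canonical natural isomorphism between any two instances of $\Anc_G$.

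I expect the main obstacle to be bookkeeping rather than conceptual: making precise that the motivic lift of $\theta_W'\circ\theta_W^{-1}$ in step (ii) is genuinely canonical and that the resulting family $(\Anc_G)$-isomorphisms is natural, one must verify that Ancona's extension of $a$ from $\End(V_F^{\ot n})$ to arbitrary $\Hom$-spaces between sums of mixed tensor powers — including the weight-zero vanishing when $a-b\ne c-d$ — is itself independent of auxiliary choices and functorial. Once this is set up carefully the faithfulness of $H^\bullet_B$ on the relevant $\Hom$-groups and the uniqueness in O'Sullivan's theorem do all the heavy lifting, so the proof should be short.
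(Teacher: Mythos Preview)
Your proposal is correct and, on the main point, coincides with the paper's argument: the only choice the paper's proof explicitly addresses is the presentation $\theta_W$, and there it does exactly what you do in (ii) --- lift the change-of-presentation isomorphism via Theorem \ref{AnconaEnd}. The paper makes this slightly more explicit by first reducing to irreducible $W$, then using Hom--tensor adjunction to set $b=b'=0$, then invoking the weight argument to force $a=a'$, so that Theorem \ref{AnconaEnd} applies directly to $\End(V_F^{\ot a})$; you instead appeal to the already-extended lift of Construction \ref{Anconaconstr}, which amounts to the same thing.

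Your treatment is more thorough than the paper's in that you also dispatch (i), (iii), (iv). The paper regards the lattice choice as already handled by Proposition \ref{pain!}, the map $a$ as canonical by the statement of Theorem \ref{AnconaEnd}, and O'Sullivan's section as unique by Theorem \ref{O'Sullivan}, so it does not revisit them. Your argument that $a$ is forced by faithfulness of $H^\bullet_B$ on the relevant $\Hom$-groups (Remark \ref{faithful}) is a nice observation that the paper leaves implicit. Both proofs are equally terse on why the resulting family of isomorphisms is natural in $W$.
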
%
\begin{proof}
	Fix $W \in \Rep_F(G)$ and two summands of tensor spaces, $e \cdot\bigoplus V_F^{a_k}\ot V_F^{\vee \ot b_k}$, $e'\cdot\bigoplus V_F^{\ot a'_k}\ot V_F^{\vee \ot b'_k}$, which are both isomorphic to $W$. We must provide an isomorphism 
	\[e \cdot\bigoplus h^1(S_{V_F,K})^{\vee a_k}\ot h^1(S_{V_F,K})^{\ot b_k} \to e' \cdot\bigoplus h^1(S_{V_F,K})^{\vee a'_k}\ot h^1(S_{V_F,K})^{\ot b'_k}.\]
	 Given the compatibility of the K\"unneth formula with mixed Shimura varieties, we may assume that $W$ is irreducible and there is a corresponding isomorphism $e\cdot (V_F^{\ot a}\ot V_F^{\vee \ot b})\to e\cdot (V_F^{\ot a'} \ot V_{F}^{\vee \ot b'})$. 
	 
	 As before, it suffices to assume that $b=b'=0$. For weight reasons, we must then have that $a=a'$.	Finally, since Lemma \ref{AnconaEnd} lifts all elements of $\End_{\Rep_F(G)}(V_F^{\ot a})$, we obtain a motivic lift of the isomorphism between the two tensor space representatives of $W$. For varying $W$, this yields a natural isomorphism and so the desired independence.
\end{proof}
\begin{remark}\label{Anconaextends}
	Let $(G,\mathfrak{X})$ be a Shimura datum with a chosen PEL-datum for which all objects of $\Rep(G)^\AV$ are direct summands of $V^{\op n}$ for varying $n$. Then the argument given above can be adapted to show that $\Anc_G$ extends $\mu^\mot_G$ up to natural isomorphism. If the PEL-datum only has factors of symplectic type in the sense of Definition \ref{Bdecomp}, then this always holds (see Lemma \ref{AVsummand}). This can also be checked to hold much more generally.
\end{remark} 
\begin{theorem}[\textrm{\rm \cite[Thm.\ 8.6]{Anconapaper}}]\label{ancfunc} Let $(G,h)$ be a Shimura datum of PEL-type with a fixed PEL-datum $(B,*,V,\la \ , \ \ra,h)$. Fix also a choice of neat open compact subgroup $K\le G(\A_f)$ and denote by $S$ the Shimura variety $\Sh_K(G,h)$. Then the following diagram commutes,
\[		\begin{tikzcd}[column sep=tiny]
		\Rep_F(G) \ar{rr}{\Anc_G} \ar{dr}[swap]{\mu_G^\H} && \CHM_F/S \ar{dl}{H^\bullet_B} \ar[dll, phantom, "\implies" rotate=-155, near start, start anchor={[xshift=-2ex]}, end anchor= north ] \\
		{}& \VHS_F/{S(\co)}
		\end{tikzcd}\]
	up to canonical natural isomorphism.
\end{theorem}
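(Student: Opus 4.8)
The statement to prove is Theorem~\ref{ancfunc}: that Ancona's functor $\Anc_G$ (as repackaged in Construction~\ref{Anconaconstr}) lifts the canonical construction $\mu_G^\H$ through the Betti realisation, up to canonical natural isomorphism. Since $\Anc_G$ is built out of $h^1(S_{V_F,K})^\vee$ together with the lifted endomorphisms supplied by Theorem~\ref{AnconaEnd}, and since $H^\bullet_B$ is a $\ot$-functor, the idea is to check compatibility on the generators and then propagate.

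\textbf{Step 1: Compatibility on the standard representation.} First I would invoke Lemma~\ref{keylemma} (equivalently the isomorphism $\phi_{V_F}\colon H^1_B(S_{V_F,K}(\co))^\vee \overset{\sim}{\to}\mu_G^\H(V_F)$ of Notation~\ref{phidef}). This already gives the desired natural isomorphism on the single object $V_F$, compatibly with the $F$-structure since $\mu_G^\mot$ and the Künneth decomposition of Theorem~\ref{DenMur} respect it. So on $V_F$ the triangle commutes by construction.

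\textbf{Step 2: Compatibility on tensor powers and on morphisms.} Next, since $H^\bullet_B$, $\mu_G^\H$ and $\Anc_G$ are all $\ot$-functors, the isomorphism $\phi_{V_F}^{\ot n}$ identifies $H^\bullet_B(\Anc_G(V_F^{\ot n})) = H^1_B(S_{V_F,K}(\co))^{\vee\ot n}$ with $\mu_G^\H(V_F)^{\ot n} = \mu_G^\H(V_F^{\ot n})$. One then needs that for each $\alpha\in\End_{\Rep_F(G)}(V_F^{\ot n})$ the square relating $\Anc_G(\alpha)=a(\alpha)$ to $\mu_G^\H(\alpha)$ commutes after applying $H^\bullet_B$ — but this is precisely the content of the commuting triangle in Theorem~\ref{AnconaEnd}. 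By Hom-tensor adjunction (exactly as in Construction~\ref{Anconaconstr}) this extends the compatibility to all $\Hom$-spaces between objects of the form $\bigoplus V_F^{\ot a_k}\ot V_F^{\vee\ot b_k}$, the weight obstruction forcing $a-b=c-d$ so that everything reduces to the case covered by Theorem~\ref{AnconaEnd}.

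\textbf{Step 3: Passing to direct summands, and to $\CHM_F/S$.} For a general $W\in\Rep_F(G)$, fix the chosen isomorphism $\theta_W\colon W\overset{\sim}{\to} e_W\cdot(\bigoplus V_F^{\ot a_{W,n}}\ot V_F^{\vee\ot b_{W,n}})$ used to define $\Anc_G(W)$; applying $H^\bullet_B$ to the idempotent $e_W$ and transporting the natural isomorphism of Step~2 along $\mu_G^\H(\theta_W)$ produces the comparison isomorphism on $W$. Naturality in $W$ follows because any morphism $W\to W'$ becomes, after conjugating by the $\theta$'s, a morphism between tensor-space summands, which is handled by Step~2; the independence of the resulting natural isomorphism from the choices of $\theta_W$ and of idempotents is Lemma~\ref{indep}. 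Finally, the section $\mathcal{I}$ of Theorem~\ref{O'Sullivan} lands in $\CHM_F^\ab/S$ and, by construction, $H^\bullet_B\circ\mathcal{I}=H^\bullet_B\circ\mathcal{I}\circ\mathcal{N}$ recovers the homological realisation, so composing with $\mathcal{I}$ does not disturb the comparison with $\mu_G^\H$; this upgrades the statement from $\HomM_F/S$ to $\CHM_F/S$.

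\textbf{Main obstacle.} The genuine input is Theorem~\ref{AnconaEnd} — that \emph{every} endomorphism of $V_F^{\ot n}$ admits a motivic lift whose Betti realisation is the given one, which in turn rests on Ancona's analysis showing that functoriality of mixed Shimura varieties, permutations, and the polarisation/Hard Lefschetz cycles generate $\End_{\Rep_F(G)}(V_F^{\ot n})$ in the PEL case. Granting that, the remaining work in Theorem~\ref{ancfunc} is bookkeeping: checking that the tensor, adjunction and idempotent-splitting operations used to build $\Anc_G$ are all intertwined by $H^\bullet_B$ with the corresponding operations defining $\mu_G^\H$, and that the natural isomorphism so obtained is independent of the auxiliary choices (Lemma~\ref{indep}). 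The one subtlety to be careful about is the $F$-linear structure: one must verify that the identification $\phi_{V_F}$ and all subsequent maps are $F$-equivariant, which comes down to the compatibility of the Deninger--Murre decomposition (Theorem~\ref{DenMur}) and of $\mathcal{I}$ with the $F$-action, both already recorded above.
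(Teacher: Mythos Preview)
Your proposal is correct and follows essentially the same route as the paper: define the comparison on $W$ as $\mu_G^\H(\theta_W^{-1})\circ\bigl(e_W\cdot\bigoplus(\phi_{V_F}^{\ot a_{W,n}}\ot\phi_{V_F}^{\vee\ot b_{W,n}})\bigr)$ and then verify naturality by reducing (via $\theta_W$, duality and weight considerations) to endomorphisms of $V_F^{\ot n}$, where it is exactly the commuting triangle of Theorem~\ref{AnconaEnd}. The paper's write-up is terser and attributes naturality to Lemma~\ref{commutativity}~{\it i)}, which strictly speaking only treats the morphisms in $\Rep_F(G)^\AV$ arising from functoriality of mixed Shimura varieties; your invocation of Theorem~\ref{AnconaEnd} is the more precise reference, since it is that result (for which Lemma~\ref{commutativity}~{\it i)} is one ingredient alongside the permutation and polarisation lifts) that handles \emph{all} of $\End_{\Rep_F(G)}(V_F^{\ot n})$. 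One small correction: the natural isomorphism is canonical \emph{relative to} the choices $\theta_W$ already fixed in defining $\Anc_G$ (as the paper notes), so Lemma~\ref{indep} is not needed to make $\eta_G$ well-defined---it only ensures that different presentations of $\Anc_G$ yield compatible $\eta_G$'s.
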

\begin{proof}
 The natural isomorphism necessarily depends on the choice of $\Anc_G$. Explicitly, in the notation of Construction \ref{Anconaconstr}, write $\eta_{G,V}$ for
	\[  \mu_G^\H(\theta_W^{-1}) \circ ( e_W\cdot \bigoplus (\phi_{V_F}^{\ot a_{W,n}} \ot \phi_{V_F}^{\vee\ot b_{W,n}})),  \]
	where $\phi_{V_F}$ is as defined in Notation \ref{phidef}. That $\eta_G:=(\eta_{G,V})_V$ defines a natural isomorphism now follows from Lemma \ref{commutativity} {\it i)}.
\end{proof}
\section{Compatibility with base change}
In this section, we give conditions to ensure Ancona's construction and Theorem \ref{ancfunc} are compatible with base change, i.e.\ there is a commutative prism analogous to that of Theorem \ref{tildenat}.

Let $f\colon (G',h')\to (G,h)$ be a morphism of Shimura data each with a chosen PEL-datum. Denote their standard representations by $V',V$ respectively. By Lemma \ref{DM}, $f^*V \cong e\cdot (\bigoplus_n (V^{\ot a_n} \ot V^{\vee \ot b_n}))$. In order to show that $\Anc_{(-)}$ is compatible with $f$, we would need to construct an isomorphism
\[ f^*(h^1(S_{K,V})^\vee)\overset{\sim}{\longrightarrow}  e \cdot \left(\bigoplus h^1(S_{K',V'})^{\vee \ot a_n}\ot h^1(S_{K',V'})^{\ot b_n}\right) . \]
Unfortunately, such a morphism cannot be constructed using just functoriality of mixed Shimura varieties. For this reason we make the following restriction:
\begin{definition}\label{admissibledef}
	Let $f \colon (G',h')\to (G,h)$ be a morphism of PEL-type Shimura data each with a choice of PEL-datum with standard representations $V',V$. If
	\[  (\star) \qquad  f^*V \cong e\cdot V'^{\op n} \text{ for some $n\in \n$ and idempotent }e\in \End_{\Rep(G')}(V'^{\op n}), \label{star}\]
	then we say that $f$ is an \emph{admissible} morphism of Shimura varieties with PEL-data. 
	
	Note that if $f$ is admissible, then $f^*V_F \cong e_F \cdot V_F'^{\op n}$ for any $F$. Admissibility implies that there is exists a map $(S_{K,V})_{S'}\to \prod_{i=1}^n S'_{K',V'}$ as abelian varieties over $S'$.
\end{definition}
\begin{example}Given a PEL-datum $(B,*,V, \la \ , \ \ra , h)$ and $B\subseteq B'$ a $\q$-subalgebra, then $(B',*,V,\la \ , \ \ra, h)$ is also a PEL-datum. If $(G,h), (G',h)$ denote the respective Shimura data, then the induced map $(G',h)\hookrightarrow (G,h)$ with the above choices is an admissible morphism.%
\end{example}
\begin{lemma}\label{identityadmissible}
	The identity map $(G,\mathfrak{X})\to (G,\mathfrak{X})$ is admissible for any choice of PEL-data for the source and target.
\end{lemma}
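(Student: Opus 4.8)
The statement to prove is that $V$, the standard representation attached to the chosen PEL-datum on the target copy of $(G,\mathfrak{X})$, is a direct summand of $V'^{\oplus n}$ for some $n$, where $V'$ is the standard representation of the chosen PEL-datum on the source copy; for $f=\id$ this is exactly $(\star)$. Both $V$ and $V'$ are \emph{faithful} objects of $\Rep(G)^\AV$: faithfulness is built into the definition of a PEL-datum, and the Hodge type $\{(-1,0),(0,-1)\}$ is forced by the homomorphism $h\colon \co\to\End_{B_\re}(V_\re)$. Since $\Rep(G)$ is semisimple and multiplicities of irreducible constituents are compatible with Galois descent, the existence of such a decomposition $V\cong e\cdot V'^{\oplus n}$ can be checked after extending scalars to $\overline{\q}$; there it amounts to showing that every irreducible constituent of $V_{\overline{\q}}$ occurs in $V'_{\overline{\q}}$, and then taking $n$ to be the largest ratio of multiplicities. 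So the whole content is: \emph{any two PEL standard representations of $(G,\mathfrak{X})$ have the same irreducible constituents up to multiplicity}.

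The plan for this is to pin down the constituents using the classification of Lemma \ref{PELclass}. Write $G^{\mathrm{der}}_{\overline{\q}}=\prod_i L_i$ with $L_i$ almost-simple, and let $\mu_i$ be the cocharacter of $L_i$ induced by the Hodge cocharacter $\mu_h$; it is minuscule, and by Lemma \ref{PELclass}(ii),(iii) the datum has no compact factors, so each $\mu_i$ is nontrivial. A nontrivial minuscule cocharacter of a simple group acts on any nontrivial irreducible with weight-spread $\ge 1$ (its extreme weights already pair differently with it), so in an irreducible summand $\chi\otimes\bigotimes_i W_i$ of a representation of Hodge type $\subseteq\{(-1,0),(0,-1)\}$ at most one $W_{i_0}$ can be nontrivial: every $\AV$-constituent is supported on a single almost-simple factor, up to a character $\chi$ of the connected centre (which is in turn pinned down by the condition of being pure of weight $-1$). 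Using the explicit form of the $L_i$ and $\mu_i$ in Lemma \ref{PELclass} (symplectic factors $\Sp_{2g_i}$, linear factors $\SU_{a_j,b_j}$, orthogonal factors $\SO^*_{2r_k}$), one then checks type-by-type that the only irreducibles of $L_{i_0}$ of $\mu_{i_0}$-spread $\le 1$ which, after the forced central twist, are pure of weight $-1$ are the "standard" ones ($\mathrm{std}_{2g_i}$ for symplectic factors, $\mathrm{std}$ and $\mathrm{std}^\vee\otimes\mathbb{L}$ for linear factors, $\mathrm{std}_{2r_k}$ for orthogonal factors, with $\mathbb{L}=\q(1)$ the similitude character). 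Since $V'$ is faithful it contains, for each $i$, at least one such constituent, hence all of them; and the same holds for $V$.

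The remaining and main step is to rule out further constituents: concretely, a constituent $W$ of $V$ supported only on the centre cannot occur, because the similitude group defining a PEL-datum is assembled as a fibre product over $\G_m$ of the similitude groups of the $B$-isotypic blocks of $V$, so a block with no constituent in common with the "standard" part would contribute an extra factor $\GU(W\text{-block})$ strictly enlarging the group beyond $G$ — contradicting that the PEL-datum has group $G$. Hence the constituents of $V$, and likewise of $V'$, are exactly the standard ones attached to the almost-simple factors of $G$, and these sets coincide; this yields $V\cong e\cdot V'^{\oplus n}$ and so the admissibility of the identity (the argument being symmetric in the two PEL-data, as it must be). I expect the laborious part to be exactly this last point: carefully tracking the similitude group through the $B$-isotypic decomposition of $V$ in each of the symplectic, linear and orthogonal cases of Lemma \ref{PELclass} to see that no decoupled block, and in particular no purely central constituent, can appear.
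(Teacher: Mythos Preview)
Your argument has a real gap in what you call step 2. The claim that the central character $\chi$ is ``pinned down by the condition of being pure of weight $-1$'' fails whenever $Z(G)^\circ$ has rank larger than one, i.e.\ whenever there is a linear or orthogonal factor: the weight is one linear condition on the character lattice of the centre, and even the full Hodge-type constraint only pins down $\chi\circ h$, not $\chi$ itself. The paper's Example \ref{admissibilitycounterexample} is exactly this phenomenon: for $G'=\GU_{1,1}$ there is a nontrivial character $\chi$ trivial on $\im h'$, so $V'$ and $V'\otimes\chi$ are faithful objects of $\Rep(G')^\AV$ with disjoint sets of irreducible constituents over $\overline{\q}$. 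Since your steps 1--3 use only faithfulness and Hodge type, they cannot tell a PEL standard representation apart from such a twist; taken at face value they would prove that \emph{every} morphism of PEL data is admissible, contradicting that example. (Step 4 has a separate, smaller issue: purely central constituents \emph{do} occur, arising from $\U_{1,0}$ factors of $G_{1,\re}$, which Lemma \ref{PELsatisfySV5}(ii) permits.)

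The paper's proof is much shorter and uses the PEL structure in an essential way that your outline does not. It works over $\re$ and invokes Lemma \ref{PELsatisfySV5}(i) directly: that classification sets up a bijection between simple factors of $B_\re$ and factors of $G_{1,\re}$, with the corresponding block of $V_\re$ determined up to Morita multiplicity (the $n$ in $M_n(\re)$, $M_n(\co)$ or $M_n(\HH)$). Since both PEL-data define the same $G_{1,\re}$, the blocks of $V_\re$ and $V'_\re$ correspond and differ only in these multiplicities, so each is a summand of a power of the other as $G_\re$-modules. The key input --- that the centre acts on each block through the canonical scalars, ruling out exotic twists like the $\chi$ above --- comes for free from the description of $V_\re$ as a $B_\re$-module, not from any abstract representation-theoretic constraint.
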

\begin{proof}
	Let $V',V$ be the standard representations of the source and target respectively and $B',B$ the chosen $\q$-algebras. It suffices to show that $V_\re$ is a summand of some $V_\re'^{\op n}$. It is a consequence of Lemma \ref{PELsatisfySV5} {\it i)} that the pairs $(B_\re,V_\re)$ and $(B'_\re,V'_\re)$ may only differ up to Morita equivalence (given that they both correspond to $G_{1,\re}$). To be more explicit, say $B_\re$ has a factor $M_a(\HH)$ with corresponding factor $(\HH^{\op a})^{\op n}$ of $V_\re$, then $B'_{\re}$ has a factor $M_b(\HH)$ with corresponding factor $(\HH^{\op b})^{\op n}$ of $V'_{\re}$. The corresponding factor of $G_{1,\re}$ is then $\O_{2n}^*$ acting in the obvious way. It is then clear that $V_\re$ is a summand of some number of copies of $V'_{\re}$ as $G_{\re}$-modules.
\end{proof}
\begin{example}%
	In Example \ref{twoPELdata}, we described two PEL-data for $(\GL_2,\mathcal{H})$, one with standard representation $V'=\q^{\op 2}$ and the other with standard representation $V=\q^{\op 2}\op \q^{\op 2}$. The identity map $(\GL_2,\mathcal{H})\to (\GL_2,\mathcal{H})$ is admissible for each of the two ways of assigning each $(\GL_2,\mathcal{H})$ a distinct choice of the two PEL-data. Indeed, $\id^*V'\cong(i_1\circ\pi_1)\cdot V$ and $\id^*V\cong V'^{\op 2}$.
\end{example}
Not all morphisms of Shimura data with chosen PEL-data are admissible (see Example \ref{admissibilitycounterexample}), but in Section \ref{sec:symplectic-case} we show that if the PEL-datum on the source has only factors of symplectic type then it is admissible. In any case, it is easy to check if a given morphism is admissible.

We now assume $f\colon (G',h') \to (G,h)$ is admissible and fix one such isomorphism as in $(\star)$.
\begin{constr}
	We now have canonical isomorphisms:
	\begin{align*}  f^*\Anc_G(V)& = h^1(S_{K,V})^\vee_{S'}\\
	&=h^1((S_{K,V})_{S'})^\vee,\\ 
	\intertext{using that the canonical projectors $h^i$ commute with pullbacks \cite[Thm.\ 3.1]{DeningerMurre},}
	&\hspace{-3.5mm}\overset{\text{Lem.\ \ref{fibreprodprop}}}{=} h^1(S'_{K',f^*V})^\vee\\
	&\hspace{-0.5mm}\overset{(\star)}{=}h^1(S'_{K',e\cdot V'^{\op n}})^\vee\\
	&=e\cdot (h^1(S'_{K',V'})^{\op n})^\vee\\
	&= \Anc_{G'}f^* V.
	\end{align*}	 
	by Lemma \ref{fibreprodprop} {\it i)} and the K\"unneth formula \ref{kunneth}.
	Write $\lambda_{V}$ for this composite. For $V_F$, there is an analogous $\lambda_{V_F}$.
\end{constr}
\begin{notat}
	As functors on $\Rep_F(G)$, we extend this to a putative natural isomorphism $\lambda \colon f^*\circ\Anc_G\implies \Anc_{G'}\circ f^*$ as follows: Let $W \in \Rep_F(G)$. Since the construction of $\Anc_{G'}$ is independent of the choice of the $\theta'_{W'}$ (Lemma \ref{indep}), we are free to assume that, for $W\in \Rep_F(G)$ with $\theta_W \colon W \overset{\sim}{\to} e_W\cdot (\bigoplus V_F^{\ot a_n}\ot V_F^{\vee \ot b_n}) $, then $\theta'_{f^*W}$ is obtained from $f^*\theta_W$ by taking the tensor products and direct sums of (the base change of) the isomorphism of $(\star)$. So we have,
	\begin{align*} f^*\Anc_G(W)&= e_W\cdot(\bigoplus h^1(S_{K,V_F})_{S'}^{ \vee \ot a_k}\ot h^1(S_{K,V_F})_{S'}^{ \ot b_k}),\\ \Anc_{G'}f^*(W) &= e_W\cdot( \bigoplus (e\cdot\bigoplus h^1(S'_{K',V'_F}))^{\vee\ot a_k}\ot (e\cdot\bigoplus h^1(S'_{K',V'_F}))^{ \ot b_k}) ) .\end{align*}
	There is now an obvious choice for $\lambda_W$ given by taking sums and products of $\lambda_{V_F}$ and its dual.
\end{notat}
\begin{theorem}\label{ancpullback}\label{main}Let $f\colon (G',h')\to (G,h)$ be an admissible morphism of PEL-type Shimura data with fixed PEL-data. Then
	\begin{enumerate}[i)]
		\item the following diagram commutes:
		\[	\begin{tikzcd}
		\Rep_F(G) \ar{d}[swap]{f^*} \ar{r}{\Anc_G}\arrow[dr,phantom,  "\implies" rotate=-145]  & \CHM_F/S \ar{d}{f^*} \\
		\Rep_F(G') \ar{r}[swap]{\Anc_{G'}} & \CHM_F/S'
		\end{tikzcd}\]
		up to natural isomorphism given by $\lambda\colon f^*\circ \Anc_{G'} \implies \Anc_{G'}\circ f^*$.
		\item there is a commutative prism
			\[	\begin{tikzcd}[column sep=tiny]
		\Rep_F(G) \ar{dd}[swap]{f^*} \ar{dr}[swap]{\mu_{G}^{\mathcal{H}}} \ar{rr}{\Anc_G} && \CHM_F/S\ar{dd}{f^*} \ar{dl}{H^\bullet_B}\\
		& \VHS_F/S(\co)  & \\
		\Rep_F(G') \ar{dr}[swap]{\mu_{G'}^{\mathcal{H}}}\ar[rr,"\Anc_{G'}" near start] && \CHM_F/S' \ar{dl}{H_B^\bullet}\\
		& \VHS_F/S'(\co) \arrow[ uu,crossing over, swap, "f^*" near end, leftarrow] & 
		\end{tikzcd}\]
		for which the prescribed natural isomorphisms on each face are compatible.
		\end{enumerate}
\end{theorem}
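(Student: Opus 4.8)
The plan is to transport part i) across the Betti realisation, reducing it — exactly as in the proof of Theorem \ref{tildenat} — to the base-change compatibility of the canonical construction (Construction \ref{canconbc}), and then to assemble the prism of part ii) face by face. Since $\Anc_{(-)}$ is by construction the O'Sullivan section $\mathcal{I}$ composed with a functor into $\HomM_F/-$, and $\mathcal{I}$ commutes with $f^*$ up to a canonical natural isomorphism (Remark \ref{O'Sullivanbc}), it suffices to establish part i) with $\Anc$ valued in homological motives; I work there from now on, and pass back to Chow motives at the very end automatically. On objects, $\lambda$ is built from $\lambda_{V_F}$ by forming tensor products, direct sums and duals, and $\lambda_{V_F}$ is an isomorphism by Lemma \ref{fibreprodprop} and the K\"unneth formula (Theorem \ref{kunneth}); hence $\lambda_W$ is an isomorphism for every $W$, and the content is naturality.

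\textbf{Reduction to $\End(V_F^{\otimes n})$.} Because $\Anc_G$, $\Anc_{G'}$ and the isomorphisms $\lambda_W$ are all defined compatibly from the tensor spaces $V_F^{\otimes a}\otimes V_F^{\vee\otimes b}$ via direct sums, idempotents and Hom--tensor adjunction (this is precisely how $\lambda$ is specified in the Notation preceding Theorem \ref{main}), and because — by the weight argument in Construction \ref{Anconaconstr} — every morphism of tensor spaces which is not forced to vanish is, after adjunction, an element of some $\End_{\Rep_F(G)}(V_F^{\otimes n})$, it is enough to check the naturality square of $\lambda$ for a single $\alpha\in\End_{\Rep_F(G)}(V_F^{\otimes n})$. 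Now both composites $\lambda_{V_F^{\otimes n}}\circ f^*\Anc_G(\alpha)$ and $\Anc_{G'}(f^*\alpha)\circ\lambda_{V_F^{\otimes n}}$ are morphisms between direct summands of the degree-$n$ motive of a power of the universal abelian variety over $S'$ (by the K\"unneth formula $h^1(A)^{\otimes n}$ is a direct summand of $h^n(A^{\times_S n})$ for an abelian variety $A/S$); so by Remark \ref{faithful} the relative Betti realisation $H^\bullet_B$ is faithful on the Hom-group containing them, and it suffices to verify the equality after applying $H^\bullet_B$.

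\textbf{Conclusion and part ii).} Applying $H^\bullet_B$ and using the identifications $\phi$ of Notation \ref{phidef}: by Theorem \ref{AnconaEnd}, $H^\bullet_B(\Anc_G(\alpha))$ becomes $\mu_G^\H(\alpha)$ and $H^\bullet_B(\Anc_{G'}(f^*\alpha))$ becomes $\mu_{G'}^\H(f^*\alpha)$, while the chain of identifications defining $\lambda_{V_F}$ realises, step by step, to the map $H^1_B(\psi_{V_F})$ of Lemma \ref{commutativity} ii), which that lemma matches with the base-change comparison $\kappa_{V_F}$ of Construction \ref{canconbc}. As all functors in sight are $\otimes$-functors and $\kappa$ is monoidal, the desired identity becomes $\mu_{G'}^\H(f^*\alpha)\circ\kappa = \kappa\circ f^*\mu_G^\H(\alpha)$, i.e.\ the naturality of $\kappa$; this proves part i). It also records, as a by-product, that $H^\bullet_B(\lambda)$ is canonically the composite of $\xi$, $f^*\eta_G$, $\kappa$ and $\eta_{G'}^{-1}$ (with $\eta_G,\eta_{G'}$ from Theorem \ref{ancfunc}). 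For part ii), the two triangular faces of the prism are Theorem \ref{ancfunc} for $(G,h)$ and $(G',h')$, the rear face is part i) ($\lambda$), the front-left face is Construction \ref{canconbc} ($\kappa$), and the front-right face is Remark \ref{xi} ($\xi$); the mutual compatibility of these natural isomorphisms is exactly the by-product just noted for the face through $\VHS_F/S(\co)$, and for the remaining compatibilities one again works in $\HomM_F/S'$ between summands of $h^n$'s of abelian varieties, where $H^\bullet_B$ is faithful, and reduces to Lemma \ref{commutativity}.

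\textbf{Main obstacle.} Each conceptual step is short; the real work is the verification, in the conclusion of part i), that the explicitly-constructed comparison $\lambda_{V_F}$ realises to the expected composite of $\xi$, $\eta_G$, $\kappa$ and $\eta_{G'}$. This requires carefully unwinding the chain of identifications defining $\lambda_{V_F}$ — base change of motives, commutation of the K\"unneth projectors with $f^*$, the isomorphism of Lemma \ref{fibreprodprop} ii), the representation-theoretic isomorphism $(\star)$, and the splitting of Lemma \ref{fibreprodprop} i) — and feeding it through Lemma \ref{commutativity} ii); once that bookkeeping identification is in hand, everything else is formal.
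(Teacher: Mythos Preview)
Your proof is correct and follows essentially the same route as the paper: reduce to homological motives via O'Sullivan's section (Remark \ref{O'Sullivanbc}), then use faithfulness of $H^\bullet_B$ on the relevant Hom-groups (Remark \ref{faithful}) to transport the question to $\VHS$, where it becomes the base-change square of Lemma \ref{commutativity} {\it ii)} for $V_F$; part {\it ii)} then falls out as the by-product you describe. The only organisational difference is that the paper phrases the reduction as ``show $H^\bullet_B(\lambda)$ equals the composite $\eta_{G'}^{-1}\circ\kappa\circ f^*\eta_G\circ\xi^{-1}$ on $V_F$'' rather than ``check the naturality square of $\lambda$ for each $\alpha\in\End(V_F^{\otimes n})$'', but since $\lambda$ and $\kappa$ are monoidal these are the same verification.
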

\begin{proof}
	For {\it i)}, we must check that $\lambda$ is a natural isomorphism. In view of Theorem \ref{O'Sullivan} (whose section is used in Construction \ref{Anconaconstr} to define $\Anc_G$), it suffices to check commutativity after projection to homological motives. Moreover, since the functor $H^\bullet_B(-)$ is injective on $\Hom_{\HomM_F/S'}(h^i(A_1),h^i(A_2))$ for $A_1, A_2$ abelian varieties over $S'$ (see Remark \ref{faithful}), it is enough to check that $H^\bullet_B(\lambda) \colon H^\bullet_B\circ f^*\circ\Anc_G \implies H^\bullet_B\circ \Anc_{G'}\circ f^*$ is a natural isomorphism. But we already have a natural isomorphism $ H^\bullet_B \circ f^*\circ\Anc_G \implies H_B^\bullet\circ \Anc_{G'}\circ f^*$, namely by composing the realisations of the natural isomorphisms of the other faces appearing the prism of {\it ii)} (this doesn't use the compatibility statement of {\it ii)}). So it suffices to check that $H^\bullet_B(\lambda)$ coincides with the one already constructed. We need only check this for $V_F$ itself, i.e. that
	\[	H_B^\bullet(\lambda_{V_F})= \eta_{S',f^*V_F}^{-1}\circ\kappa_{V_F}\circ f^*(\eta_{S,V_F})\circ  \xi_{h^1(S_{K,V_F})}^{-1} .\]
	Here $\eta_{S,V_F}$ is as defined in the proof of Theorem \ref{ancfunc}, $\kappa$ is as defined in Construction \ref{canconbc} and $\xi$ is as defined in Remark \ref{xi}.
	
	Applying $\xi_{h^1(S_{K,V_F})}$ to both sides, this means checking the equality of:
	\begin{align*}
	& f^*H^1_B((S_{K,V_F})(\co)) {\to}H^1((S_{K',f^*V_F})(\co) ) \overset{(\star)}{\to} e \cdot  H_B^1((S'_{K',V'_F})(\co)), \\
	&f^*H_B^1((S_{K,V_F})(\co)) \overset{f^*{\phi}_{V_F}}{\to} f^*\mu_G^{\mathcal{H}}(V_F) \overset{\kappa_V}{\to} \mu_{G'}^{\mathcal{H}}(f^*V_F) \overset{ \phi_{f^*V_F}^{-1}}{\to }H^1_B((S_{K,f^V_F})(\co))\\
	&\hspace{9cm} \overset{(\star)}{\to}   e\cdot\bigoplus H^1_B((S'_{K',V'_F})(\co)) ,
	\end{align*}
	where, in the second line the composite of the last two maps is $\eta_{S',f^*V_F}^{-1}$, as defined in Theorem \ref{ancfunc}. The equality now follows from the commutativity of:
\[		\begin{tikzcd}
			f^*H^1_B((S_{K,V_F})(\co)) \ar{r}{f^*(\phi_{V_F}) }\ar{d}[swap]{H^1(\lambda_{V_F})} & f^*\mu_G^{\mathcal{H}}(V_F) \ar{d}{\kappa_{V_F}}  \\
			H^1_B((S'_{K',f^*V_F})(\co))\ar{r}[swap]{ \phi_{f^*V_F}}&\mu_{G'}^{\mathcal{H}}( f^*V_F)
		\end{tikzcd}\]
	as shown in Lemma \ref{commutativity} {\it ii)}.
	
	In proving {\it i)} we, in fact, verified the compatibility statement of {\it ii)}.
	\todo{}
\end{proof}
Note that the statement of Theorem \ref{ancpullback} {\it i)} is independent of the choice of realisation. Since the identity map is always admissible (Lemma \ref{identityadmissible}), this shows that Ancona's construction is independent of the choice of PEL-datum. 
\section{\'Etale canonical construction}\label{etalecase}
Canonical constructions arise more generally than just the Hodge realisation, and both $ \mu_G^\mot$ and Ancona's construction should also be lifts of any such construction. We sketch this for the \'etale realisation following \cite[Sec.\ II.4]{Wild}. We use the notation of the \'etale realisation described in Lemma \ref{etalerealisation} \todo{}.

\begin{notat}
	Let $(G,\mathfrak{X})$ be a Shimura datum and $K$ be a neat open compact subgroup of $G(\A_f)$. We consider $S:=\Sh_K(G,\mathfrak{X})$ to be defined over its reflex field $E/\q$ via canonical models. Let $V\in \Rep_F(G)$ and $L$ be a $K$-stable full rank $\hat \z$-sublattice of $V(\A_f)$. Recall from Section \ref{msv} that there is a mixed Shimura variety $S_{K,V}:=\Sh_{L\rti K}(V\rti G,\tilde{\mathfrak{X}})$ whose reflex field is the same as that of $S$.
	The projection and Levi section then define regular maps $p\colon S_{K,V}\to S, \iota \colon   S\to S_{K,V}$.%
\end{notat}
\begin{constr}
	Let $(G,\mathfrak{X})$ be a Shimura datum and $K\le G(\A_f)$ neat open compact. If $K'\le K$ is an open normal subgroup, then there is a right action of $K/K'$ on $\Sh_{K'}(G,\mathfrak{X})$. Since we are assuming that the centre of $G$ is an almost-direct product of a $\q$-split and $\re$-anisotropic torus, the action of $K/K'$ is free on $\co$-points and
	\[ \Sh_{K'}(G,\mathfrak{X}) \longrightarrow\Sh_{K}(G,\mathfrak{X})  \]
	is an \'etale cover of smooth algebraic varieties with Galois group $K/K' $ (see \cite[Prop.\ 3.3.3. and (3.4.1)]{PinkSheaves}).
	 
 Taking the inverse limit over $K'\le K$ we obtain a pro-Galois covering of $\Sh_K(G,\mathfrak{X})$ with Galois group $K$. Then, in the notation of Section \ref{sec:realisations}, any $F_\lambda$-linear continuous representation of $K$ will define a lisse $F_\lambda$-sheaf on $\Sh_K(G,\mathfrak{X})$.
	
	Given $(G_F\overset{\rho}{\to} \GL(V))\in \Rep_F(G)$, we obtain such a representation via
	\[K\hookrightarrow G(\A_f) \twoheadrightarrow G(\q_\ell) \hookrightarrow G(F_\lambda)=G_F(F_\lambda)\overset{\rho(F_\lambda)}{\to } \GL(V)(F_\lambda) . \]
	This defines a functor
	\[ \mu_{G}^\et \colon \Rep_F(G)\to \Et_{F_\lambda}/ S, \]
	which we refer to as the \emph{\'etale canonical construction}.%
\end{constr}
\begin{lemma}
	Given a Shimura datum $(G,\mathfrak{X})$ and $V\in \Rep_F(G)^\AV$, then there is a canonical identification $\phi_{V,\lambda}\colon H^1_{\lambda}(S_{K,V})^\vee \overset{\sim}{\to}\mu_G^\et(V)$. 
\end{lemma}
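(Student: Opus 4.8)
The plan is to mirror the proof of Lemma \ref{keylemma} (the Hodge case), replacing the Hodge-theoretic canonical construction with its $\ell$-adic étale counterpart throughout. The key input is the analogue, for the étale canonical construction on mixed Shimura varieties, of the compatibility between higher direct images and group cohomology that is used in Lemma \ref{keylemma}; this is exactly what \cite[Sec.\ II.4]{Wild} (cf.\ also \cite{PinkSheaves}) provides.

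\medskip

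First I would recall that the étale canonical construction extends to the mixed Shimura variety $(V\rti G,\tilde{\mathfrak{X}})$: a representation $W\in \Rep_F(V\rti G)$, restricted along $L\rti K\hookrightarrow (V\rti G)(\A_f)\twoheadrightarrow (V\rti G)(\q_\ell)\hookrightarrow (V\rti G)(F_\lambda)\to \GL(W)(F_\lambda)$, yields a lisse $F_\lambda$-sheaf $\mu_{V\rti G}^\et(W)$ on $S_{K,V}=\Sh_{L\rti K}(V\rti G,\tilde{\mathfrak{X}})$, and this is functorial in $(V\rti G,\tilde{\mathfrak{X}})$ just as in the Hodge case. Then I would invoke the central fact, established by Wildeshaus, that for the structure map $p\colon S_{K,V}\to S$ (which, on the level of pro-étale covers, realises $V$ as the "unipotent fibre" of the tower for $V\rti G$ over the tower for $G$), there is a canonical isomorphism
\[ R^i p_* \big(\mu_{V\rti G}^\et(W)\big) \;\cong\; \mu_G^\et\big(H^i(V,W)\big), \]
where $H^i(V,-)$ denotes group cohomology of the unipotent radical $V$ viewed as a functor $\Rep_F(V\rti G)\to \Rep_F(G)$ (compatibly, under the comparison isomorphism, with the analogous statement for the Betti realisation from the proof of Lemma \ref{keylemma}). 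This is the étale analogue of the commuting square in Lemma \ref{keylemma} and follows from \cite[Thm.\ II.2.3, Prop.\ I.1.6c)]{Wild} together with the comparison between sheaf cohomology of the fibres and continuous cohomology of $V(\q_\ell)$, which in turn rests on the fact that $S_{K,V}\to S$ is (Zariski-locally, up to isogeny) an abelian scheme, so its relative cohomology is computed by the Lie algebra / standard complex of $V$.

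\medskip

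Applying this to the one-dimensional trivial representation $F$ of $V\rti G$, for which $H^1(V,F)=V^\vee$ as $G$-representations (the same computation as in Lemma \ref{keylemma}, valid since $V$ is a vector group so $H^*(V,F)=\bigwedge^* V^\vee$), I obtain
\[ R^1 p_* F_{\lambda,S_{K,V}} \;\cong\; \mu_G^\et(V^\vee), \]
hence, dualising, a canonical identification $\phi_{V,\lambda}\colon H^1_\lambda(S_{K,V})^\vee \overset{\sim}{\to}\mu_G^\et(V)$; by Theorem \ref{DenMur} applied to the abelian scheme $S_{K,V}\to S$ we have $H^1_\lambda(S_{K,V})=H^\bullet_\lambda(h^1(S_{K,V}))$, so this matches the étale realisation functor of Lemma \ref{etalerealisation}. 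Finally I would note the isomorphism is canonical (independent of the choice of lattice $L$ up to the canonical isogeny-invariance of $h^1(-)^\vee$, exactly as in Proposition \ref{pain!}) and natural in the Shimura datum, by functoriality of the étale canonical construction for mixed Shimura varieties.

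\medskip

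The main obstacle is the second step: setting up cleanly the identification of $R^i p_*$ with group cohomology in the étale setting. In the Hodge case this is packaged in Wildeshaus's admissibility results and the Tannakian formalism of \cite{Wild}; for the étale realisation one must instead argue via the pro-étale tower and the fact that the fibres of $S_{K,V}\to S$ are (up to isogeny) tori or abelian varieties, computing their $\ell$-adic cohomology explicitly and checking $K$-equivariance of the comparison. Since the excerpt explicitly says we only "sketch this for the étale realisation following \cite[Sec.\ II.4]{Wild}", it suffices to cite the relevant statements there rather than reprove them, so the proof can be kept short and essentially parallel to that of Lemma \ref{keylemma}.
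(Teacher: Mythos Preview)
Your proposal is correct and follows essentially the same approach as the paper: extend the \'etale canonical construction to the mixed Shimura datum $(V\rti G,\tilde{\mathfrak{X}})$, invoke Wildeshaus's compatibility $R^ip_*\circ\mu_{V\rti G}^\et \cong \mu_G^\et\circ H^i(V,-)$, and specialise to the trivial representation. The only point to correct is the citation: you quote the Hodge-theoretic references \cite[Thm.\ II.2.3, Prop.\ I.1.6c)]{Wild} and propose to deduce the \'etale statement from these via a comparison argument, whereas the paper cites directly the \'etale analogues \cite[Thm.\ II.4.7, Thm.\ I.4.3]{Wild}, which already establish the required commuting square in $\Et_{F_\lambda}$ and make your additional discussion of obstacles unnecessary.
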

\begin{proof}
	The \'etale canonical construction extends  verbatim to mixed Shimura varieties. As in the Hodge case, the diagram 
\[		\begin{tikzcd}
			\Rep_F(V\rti G) \ar{d}[swap]{H^i(V,-)} \ar{r}{\mu_{V\rti G}^\et} & \Et_{F_\lambda}/S_{K,V} \ar{d}{R^ip_*}\\
			\Rep_F(G) \ar{r}[swap]{\mu_{G}^\et} & \Et_{F_\lambda}/S
		\end{tikzcd}\]
	commutes \cite[Thm.\ II.4.7, Thm. I.4.3]{Wild}. The dual of the desired isomorphism is given by commutativity in the case of the trivial representation $F$.
\end{proof}
\begin{lemma}
	\begin{enumerate}[i)]
		\item  Let $(G,\mathfrak{X})$ be a Shimura datum and $\alpha \colon V_1 \to V_2$ a morphism in $\Rep_F(G)^\AV$. Fix a neat open compact subgroup $K \le G(\A_f)$ and let $\alpha$ also denote the map $S_{K,V_1}\to S_{K,V_2}$. Then the following diagram commutes:
	\[		\begin{tikzcd}
				H^1_{\lambda}(S_{K,V_1})^\vee \ar{r}{\phi_{V_1,\lambda} }\ar{d}[swap]{(\alpha^*)^\vee} & \mu_{G}^\et(V_1) \ar{d}{\mu_G^{\mathcal{H}}(\alpha)}  \\
				H^1_{\lambda}(S_{K,V_2})^\vee\ar{r}[swap]{ \phi_{V_2,\lambda}}&\mu_{G}^\et( V_2)
			\end{tikzcd}\]
		\item 	Let $f\colon (G',\mathfrak{X}')\to (G,\mathfrak{X})$ be a morphism of Shimura data and $K\le G(\A_f),K'\le G'(\A_f)$ neat open compact subgroups for which $f(K')\le K$. Write $E'$ for the reflex field of $(G',\mathfrak{X'})$ (so $E' \supseteq E$). For any $V\in \Rep_F(G)^\AV$, the following diagram commutes:
\[			\begin{tikzcd}
				f^*(H^1_\lambda(S_{K,V})^\vee)_{E'} \ar{r}{f^*(\phi_{V,\lambda}) }\ar{d} & f^*\mu_G^\et(V)_{E'} \ar{d}  \\
				H^1_\lambda(S'_{K',f^*V})^\vee\ar{r}[swap]{ \phi_{f^*V,\lambda}}&\mu_{G'}^\et( f^*V)
			\end{tikzcd}\]
		Here, on the top row, $f^*$ denotes pullback via the map $S_{K',f^*V}\to (S_{K,V})_{E'}$ and $(-)_{E'}$ pullback via $(S_{K,V})_{E'}\to S_{K,V}$.
	\end{enumerate}
\end{lemma}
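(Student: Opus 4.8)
The plan is to transport the argument of Lemma \ref{commutativity} verbatim to the \'etale setting, since every ingredient used there has an \'etale counterpart already recorded in this section. Concretely, both statements are proved by the same Tannakian reduction used in Lemma \ref{commutativity}: the key inputs are (a) the identification of higher direct images $R^j p_*$ of sheaves arising from the canonical construction with group cohomology $H^j(V,-)$, and (b) the fact that, after restricting to a connected component and to the subcategory of unipotent-by-(pulled back) variations, the relevant Tannaka dual sequence $1\to V_{i,x_i}\to P_{i,x_i}\to G_y\to 1$ has $V_{i,x_i}\xrightarrow{\sim} V_i$. For the \'etale realisation, (a) is exactly the commuting square proved in the previous lemma of this section (citing \cite[Thm.\ II.4.7, Thm.\ I.4.3]{Wild}), and (b) is the \'etale analogue of Wildeshaus's results, which hold for the same formal reasons (the canonical construction produces sheaves that are, \'etale-locally on a pro-cover, built from representations of the fundamental group via the uniformisation, and the unipotent part is identified with $V$).

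First I would set up the group-theoretic side exactly as in Lemma \ref{commutativity}: fix a connected component $S^0$ of $S$ (now as a $k$-variety, or after base change to an algebraically closed field to talk about $\pi_1$), take Tannaka duals of the appropriate categories of lisse $F_\lambda$-sheaves, obtain for each $i$ a short exact sequence $1\to V_{i,x_i}\to P_{i,x_i}\to G_y\to 1$ with $t_i$ dual to $\mu^\et_{V_i\rti G}$ and $r$ dual to $\mu^\et_G$, and check $V_{i,x_i}\cong V_i$. Then the square \eqref{squares}-analogue commutes for the same adjunction reason (pullback $p_i^*$ corresponds to inflation, with right adjoints $p_{i,*}$ and $(-)^{V_{i,x_i}}$), and for the trivial representation this square computes the dual of $\phi_{V_i,\lambda}$. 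For part \textit{i)}, the morphism $\alpha$ induces a map of the two exact sequences compatibly, so the squares assemble into a prism exactly as drawn in Lemma \ref{commutativity}, and a purely group-theoretic diagram chase yields the commuting square after taking Tannaka and linear duals. For part \textit{ii)}, the morphism $f\colon(G',\mathfrak X')\to(G,\mathfrak X)$ induces, after base change to $E'$, a map between the two Tannakian pictures (for $(G,\mathfrak X)$ pulled back along $S'\to S_{E'}\to S$, and for $(G',\mathfrak X')$), using Lemma \ref{fibreprodprop} \textit{ii)} to identify $(S_{K,V})_{S'}$ with $S'_{K',f^*V}$; again the diagram of exact sequences gives the square.

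One point that needs slightly more care in the \'etale case than in the Hodge case is the role of the reflex fields: the pullback square only makes sense over $E'$, which may strictly contain $E$, so I would be careful to phrase the Tannakian argument after base changing everything to $E'$ (or to a common algebraically closed field), exactly as the statement already does via the $(-)_{E'}$ notation, and to note that the identifications of Lemma \ref{fibreprodprop} and the commuting square for $R^ip_*$ are compatible with this base change since they are geometric. I expect the main obstacle to be checking that Wildeshaus's two technical inputs --- the identification $R^jp_* \leftrightarrow H^j(V,-)$ and the isomorphism $V_{i,x_i}\xrightarrow{\sim}V_i$ of the unipotent parts of the Tannaka duals --- genuinely hold in the \'etale setting with the subcategory of ``unipotent variations'' replaced by the appropriate subcategory of lisse sheaves; but since \cite{Wild} treats the \'etale case in parallel (see \cite[Sec.\ II.4]{Wild}), this is available off the shelf, and the remainder is a formal transcription of the proof of Lemma \ref{commutativity}.
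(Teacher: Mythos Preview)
Your proposal is correct and follows the same approach as the paper: the paper's proof is the one-line remark ``As for Lemma \ref{commutativity}, but using \cite[Cor.\ I.3.2 i)]{Wild}'', and you have spelled out exactly this transcription. The only refinement is that the paper pinpoints \cite[Cor.\ I.3.2 i)]{Wild} as the specific \'etale input replacing the reference \cite[p.\ 96]{Wild} used in the Hodge case for the isomorphism $V_{i,x_i}\xrightarrow{\sim}V_i$, whereas you gesture at \cite[Sec.\ II.4]{Wild} more broadly.
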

\begin{proof}
	As for Lemma \ref{commutativity}, but using \cite[Cor.\ I.3.2 i)]{Wild}.
\end{proof}

We now obtain results analogous to Theorem \ref{tildenat} and Lemmas \ref{ancfunc}, \ref{ancpullback}, whose proofs are near enough identical.
\begin{lemma}\label{tildenatet}
	Let $(G,\mathfrak{X})$ be an arbitrary Shimura datum and $K\le G(\A_f)$ neat open compact. Denote by $S$ the Shimura variety $\Sh_K(G,\mathfrak{X})$. Then the following diagram commutes,
\[		\begin{tikzcd}[column sep=tiny]
			\Rep_F(G)^\AV \ar{rr}{\mu_G^\mot} \ar{dr}[swap]{\mu_G^\et} && \CHM_F/S \ar{dl}{H^\bullet_\lambda} \ar[dll, phantom, "\implies" rotate=-155, near start, start anchor={[xshift=-2ex]}, end anchor= north ] \\
			{}& \Et_{F_\lambda}/S
		\end{tikzcd}
\]
	up to natural isomorphism given by $ \phi \colon H^\bullet_{\lambda}\circ \mu_G^{\mot} \implies \mu_G^{\mathcal{H}}$. Moreover, under pullback by $f\colon (G',\mathfrak{X}')\to (G,\mathfrak{X})$, the triangles for $(G,\mathfrak{X}),(G',\mathfrak{X}')$ form a commutative prism for which the given natural transformations on each face are compatible.
\end{lemma}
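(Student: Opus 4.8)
The plan is to run the proof of Theorem~\ref{tildenat} essentially verbatim, substituting the \'etale realisation $H^\bullet_\lambda$ of Lemma~\ref{etalerealisation} for the Hodge realisation $H^\bullet_B$, the \'etale canonical construction $\mu_G^\et$ for $\mu_G^\H$, and the \'etale analogues of the intermediate results for their Hodge counterparts. The relevant dictionary is: the collection $\phi=(\phi_V)_V$ of Notation~\ref{phidef} is replaced by $\phi_\lambda=(\phi_{V,\lambda})_V$; the base-change isomorphism $\kappa$ of Construction~\ref{canconbc} is replaced by the evident $\kappa_\lambda\colon f^*\mu_G^\et(V)\overset{\sim}{\to}\mu_{G'}^\et(f^*V)$ read off directly from the description of $\mu_{(-)}^\et$ via pro-\'etale covers; the isomorphism $\xi$ of Remark~\ref{xi} is replaced by the proper base change isomorphism $\xi_\lambda\colon f^*\circ H^\bullet_\lambda\implies H^\bullet_\lambda\circ f^*$, which is precisely the naturality in $S$ asserted in Lemma~\ref{etalerealisation}; and the natural isomorphism $\psi$ of Lemma~\ref{bccompateasy} is reused unchanged, since it records a property of mixed Shimura varieties (Lemma~\ref{fibreprodprop}~ii)) that is independent of the choice of realisation.

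The steps, in order, would be as follows. First, commutativity of the first triangle, i.e.\ that $\phi_\lambda$ is a natural isomorphism $H^\bullet_\lambda\circ\mu_G^\mot\implies\mu_G^\et$, is exactly the \'etale analogue of Lemma~\ref{commutativity}~i) established just above. Second, I would fill the four side faces of the prism with $\psi$, $\kappa_\lambda$ and $\xi_\lambda$ as in the dictionary. Third, as in the proof of Theorem~\ref{tildenat}, I would invoke O'Sullivan's Theorem~\ref{O'Sullivan} (cf.\ Remark~\ref{O'Sullivanbc}) to reduce the compatibility statement to the category of homological motives, the morphisms in question being symmetrically distinguished and hence pinned down by their classes in $\HomM_F/S'$. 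Fourth, using that $H^\bullet_\lambda$ is faithful on $\Hom_{\HomM_F/S'}(h^1(A_1),h^1(A_2))$ for abelian varieties $A_1,A_2$ over $S'$ — the \'etale counterpart of Remark~\ref{faithful}, valid because homological equivalence is by convention $\ell$-adic homological equivalence (Definition~\ref{d:relmot}) — the comparison of the two candidate natural isomorphisms $H^\bullet_\lambda\circ f^*\circ\mu_G^\mot\implies H^\bullet_\lambda\circ\mu_{G'}^\mot\circ f^*$ reduces to that of two maps on $H^1_\lambda$ of the mixed Shimura varieties $S_{K,V}$ and $S'_{K',f^*V}$, which is precisely the commuting square supplied by the \'etale analogue of Lemma~\ref{commutativity}~ii).

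The one genuinely new wrinkle relative to Theorem~\ref{tildenat} — and the only place I anticipate needing to be careful — is the bookkeeping with reflex fields: the reflex field $E'$ of $(G',\mathfrak{X}')$ may strictly contain that of $(G,\mathfrak{X})$, so $f$ must be read as the composite $S'\to S_{E'}\to S$ and the vertical maps of the prism interpreted accordingly. This is already absorbed into the statement of the \'etale analogue of Lemma~\ref{commutativity}~ii) (witness the $(-)_{E'}$ in its diagram), so it costs nothing extra here. As in the Hodge case, all of the substantive content is concentrated in Lemma~\ref{commutativity} itself, whose \'etale form rests on Wildeshaus's identification of higher direct images along the structure maps of these mixed Shimura varieties with group cohomology \cite[Thm.\ II.4.7, Thm.\ I.4.3]{Wild} together with the attendant Tannakian comparison of the associated extensions of fundamental groups; granting that, the present lemma is routine and I do not expect a serious obstacle.
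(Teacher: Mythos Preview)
Your proposal is correct and follows exactly the approach the paper intends: the paper does not give a standalone proof of Lemma~\ref{tildenatet} but simply remarks that it, together with the subsequent lemma, is obtained by arguments ``near enough identical'' to those of Theorem~\ref{tildenat} and Theorems~\ref{ancfunc},~\ref{ancpullback}. Your dictionary of substitutions ($\phi_\lambda$ for $\phi$, $\kappa_\lambda$ for $\kappa$, $\xi_\lambda$ for $\xi$, with $\psi$ unchanged) and your handling of the reflex-field bookkeeping via the already-stated \'etale analogue of Lemma~\ref{commutativity}~{\it ii)} are precisely what the paper has in mind; if anything you are more explicit than the paper about the role of faithfulness of $H^\bullet_\lambda$ on $\Hom_{\HomM_F/S'}(h^1(A_1),h^1(A_2))$, which in the proof of Theorem~\ref{tildenat} is left implicit but is spelled out in the parallel argument for Theorem~\ref{main}.
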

\begin{lemma}
	\begin{enumerate}[i)]
		\item	 Let $(G,h)$ be a Shimura datum of PEL-type with a fixed PEL datum $(B,*,V,\la \ , \ \ra,h)$. Fix also a choice of neat open compact subgroup $K\le G(\A_f)$ and denote by $S$ the Shimura variety $\Sh_K(G,h)$. Then the following diagram commutes,
\[			\begin{tikzcd}[column sep=tiny]
				\Rep_F(G) \ar{rr}{\Anc_G} \ar{dr}[swap]{\mu_G^\et} && \CHM_F/S \ar{dl}{H^\bullet_\lambda} \ar[dll, phantom, "\implies" rotate=-155, near start, start anchor={[xshift=-2ex]}, end anchor= north ] \\
				{}& \Et_{F_\lambda}/S
			\end{tikzcd}\]
		up to canonical natural isomorphism.
		\item Given a morphism of Shimura data $f \colon (G',h')\to (G,h)$, each of PEL-type with a fixed datum, which is admissible in the sense of Definition \ref{admissibledef}, then the triangles for $(G,h)$ and for $(G',h')$ together with base change form a commutative prism as in Theorem \ref{main}. Each face has a prescribed natural isomorphism which altogether are compatible.
	\end{enumerate}
\end{lemma}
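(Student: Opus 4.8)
The plan is to transcribe the proofs of Theorems~\ref{ancfunc} and~\ref{main}, substituting the étale realisation $H^\bullet_\lambda$ of Lemma~\ref{etalerealisation} for the Hodge realisation $H^\bullet_B$ throughout, and citing the étale analogues of Lemmas~\ref{keylemma} and~\ref{commutativity} (established just above) in place of their Hodge versions. Since the proofs of Theorems~\ref{ancfunc} and~\ref{main} were already phrased so as to isolate the few properties of the realisation functor that are actually used, this is largely mechanical.

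Concretely, for \emph{i)} I would define the candidate natural isomorphism exactly as $\eta_G$ was defined in the proof of Theorem~\ref{ancfunc}, with the identification $\phi_{V,\lambda}$ of the preceding lemma (the étale analogue of Lemma~\ref{keylemma}) playing the role of $\phi_{V_F}$, and deduce that it is a natural isomorphism from part \emph{i)} of the étale analogue of Lemma~\ref{commutativity}. One small remark is needed here: $\Anc_G$ is defined via the section $\mathcal{I}$ of Theorem~\ref{O'Sullivan}, so a priori it records only a homological motive; but the homological equivalence of Definition~\ref{d:relmot} may be computed with respect to $\ell$-adic cohomology, so $H^\bullet_\lambda$ interacts with $\mathcal{I}$ on the abelian varieties in question exactly as $H^\bullet_B$ did, and no new argument is required.

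For \emph{ii)} I would follow the proof of Theorem~\ref{main} line by line. By Theorem~\ref{O'Sullivan} it suffices to check that $\lambda$ is a natural isomorphism after projection to $\HomM_F/S'$, and since $H^\bullet_\lambda$ is injective on each $\Hom_{\HomM_F/S'}(h^i(A_1),h^i(A_2))$ with $A_1,A_2$ abelian schemes over $S'$, it is then enough to check that $H^\bullet_\lambda(\lambda)$ coincides with the natural isomorphism obtained by composing the realisations of the other three faces of the prism. In this comparison the étale functoriality isomorphism of $\mu^\et_{(-)}$, the naturality isomorphism of $H^\bullet_\lambda$ in $S$ from Lemma~\ref{etalerealisation}, and Lemma~\ref{tildenatet} take over the roles played by $\kappa$, $\xi$ and Theorem~\ref{tildenat} in the Hodge case. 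Unwinding the definitions reduces this, as in Theorem~\ref{main}, to the commutative square of part \emph{ii)} of the étale analogue of Lemma~\ref{commutativity}; and, exactly as there, verifying \emph{i)} simultaneously yields the compatibility assertion in \emph{ii)}.

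I expect the main obstacle — indeed the only step that is not pure substitution — to be establishing that $H^\bullet_\lambda$ is injective on $\Hom_{\HomM_F/S}(h^i(A),h^i(A'))$ for abelian schemes, the étale counterpart of Remark~\ref{faithful}. The plan for this is to reduce via the Künneth formula (Theorem~\ref{kunneth}) to the case $i=1$, where one uses the identification $\End_{\HomM_F/S}(h^1(A))\cong \End(A)^\opp\ot F$ (the homological analogue of Theorem~\ref{kings}) together with the fact that a nonzero element of the isogeny category of $A$ acts nontrivially on $H^1_\lambda(A)$; this is where the comparison between singular and $\ell$-adic cohomology and the rigidity of abelian schemes genuinely enter, everything else being formal.
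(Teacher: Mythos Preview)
Your proposal is correct and follows exactly the approach the paper takes: the paper gives no separate proof for this lemma beyond the sentence ``whose proofs are near enough identical'' to Theorems~\ref{ancfunc} and~\ref{ancpullback}, and your write-up simply makes explicit the substitutions (\'etale realisation for Betti, $\phi_{V,\lambda}$ for $\phi_V$, the \'etale analogue of Lemma~\ref{commutativity}) that this sentence leaves implicit. Your identification of the faithfulness of $H^\bullet_\lambda$ on $\Hom_{\HomM_F/S}(h^i(A),h^i(A'))$ as the one step requiring genuine attention is apt---the paper silently assumes this (cf.\ Remark~\ref{faithful} and the parenthetical on $\ell$-adic cohomology in Definition~\ref{d:relmot})---and your reduction via Theorem~\ref{kings} is the natural way to see it.
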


\section{Results on admissibility}\label{sec:symplectic-case}
In this section, we give additional results on the admissibility of morphisms of Shimura data with chosen PEL-data. Firstly, not all such morphisms are admissible:
\begin{example}\label{admissibilitycounterexample}
	Let $(G',h')$ be defined by the PEL-datum
		\[ (\q(i), *, \q(i)^{\op 2}, (-\tr_{\q(i)/\q}(xi\bar y)\op \tr_{\q(i)/\q}(xi\bar y)) ,h')   \]
	where $h'\colon \co \to \End_\re(\co^{\op 2})$ is the map which sends $z$ to multiplication by $(z,\bar z)$. We write $\GU_{1,1}$ for $G'$. Then $(\GU_{1,1})_\re$ coincides with the usual generalised unitary group of complex matrices preserving, up to scaling, a Hermitian form of signature $(1,-1)$.
	
	Let $\chi$ denote the two dimensional representation of $\GU_{1,1}$ given by the composition
	\[\chi \colon  \GU_{1,1}\overset{\det}{\longrightarrow} \Res_{\q(i)/\q} \G_m \overset{z/\bar z}{\longrightarrow} U_1 \longrightarrow \GL(\q(i)).   \]
	Here, the determinant is given by considering $\GU_{1,1}\subset \Aut_{\q(i)}(\q(i)^{\op 2})$ whilst $U_1$ denotes the norm one elements of $\q(i)$ and the final map is given by the action of $U_1$ on $\q(i)$ by multiplication. Note that the image of $\chi$ preserves the symmetric non-degenerate pairing $\tr_{\q(i)/\q}(a\bar b)$ and that, after base change to $\re$, $\chi$ is trivial on the image of $h'$.
	
	Now let $V'$ denote the standard representation of $\GU_{1,1}$ and consider the representation
	\[ \GU_{1,1} \longrightarrow \GSp(V')\ti \operatorname{GO}(\q(i)) \overset{\ot}{\longrightarrow} \GSp(V'\ot_\q \q(i)) . \]
	Since $\chi$ is trivial on $\im h'$, this is a morphism of Shimura data when $\GSp(V'\ot_\q \q(i))$ is given the PEL-datum 
	\[(\q, *=\id, V'\ot_\q \q(i),\la \ , \ \ra_{V'}\ot \q(i),h(z)=\left(\begin{array}{c c }
	z & 0 \\0& \bar z 
	\end{array}  \right) \ot \id ).\]	
	It can be checked that $f^*(V'\ot_\q \q(i)) \cong V' \ot \chi$ is not isomorphic to $V'^{\op 2}$, for example by base changing to $\co$ where $\GU_{1,1}$ becomes isomorphic to $\G_m\ti \GL_2$. As a result, $f$ is not admissible.
\end{example}
In contrast, in the symplectic case there are no non-admissible morphisms. In particular, there do not exist non-trivial representations $\chi$ which are trivial on the image of $h$ in the symplectic case.
\begin{lemma}\label{AVsummand}
	Let $(G,\mathfrak{X})$ be a Shimura datum with a choice of PEL-datum $(B,*,V,\la \ , \ \ra , h)$ for which $B_\re$ only has factors of symplectic type (in the sense of Definition \ref{Bdecomp}). Then all objects of $\Rep(G)^\AV$ are direct summands of $V^{\op k}$ for some $k$.
\end{lemma}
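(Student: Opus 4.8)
The plan is to show that every irreducible summand of $W$ is isomorphic to a summand of $V$; since $\Rep(G)$ is semisimple, this gives the lemma with $k$ large enough to accommodate the multiplicities occurring in $W$. By semisimplicity again and Galois descent of $\Hom$-spaces (both $W$ and $V$ come from $\q$, so $W_{\bar\q}$ and $V_{\bar\q}$ are $\Gal(\bar\q/\q)$-stable), it is enough to prove the corresponding statement over $\bar\q$: every $\bar\q$-irreducible constituent of $W_{\bar\q}$ occurs in $V_{\bar\q}$.

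Two structural facts are used, both relying essentially on the symplectic hypothesis. First, by Lemma~\ref{PELclass}~i) the group $G_1$ is semisimple with $(G_1)_\re\cong\prod_i\Sp_{2g_i}$, so $G^{\mathrm{der}}=G_1$, $(G^{\mathrm{der}})_{\bar\q}$ is a product of symplectic groups, and $V_{\bar\q}$ restricted to $(G^{\mathrm{der}})_{\bar\q}$ is a direct sum of standard representations, each involving a single factor and each factor's standard representation actually occurring ($V$ being faithful). Second, the similitude $\nu$ identifies $G/G^{\mathrm{der}}$ with $\G_m$, so $X^*(G_{\bar\q})=\z\cdot\nu$; and because $\la\ ,\ \ra$ identifies $V$ with $V^\vee$ up to a Tate twist, $\nu\circ\mu_h$ and $\nu\circ\bar\mu_h$ are the identity cocharacter of $\G_m$, so that $\la\mu_h,\nu\ra=\la\bar\mu_h,\nu\ra\ne0$. (For the other Albert types $G$ has further characters — for instance the $\chi$ of Example~\ref{admissibilitycounterexample} — and the argument below breaks down.)

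Now fix a $\bar\q$-irreducible constituent $\tilde W$ of $W_{\bar\q}$ and write $\tilde W|_{(G^{\mathrm{der}})_{\bar\q}}=\boxtimes_j U_j$. On each symplectic factor $\mu_h$ restricts to the rational cocharacter $\tfrac12(1,\dots,1)$, forced by the symmetry of $\Sp$-weights together with the fact that $\mu_h$ has only the two weights $\{0,1\}$ on the standard constituent of $V$; hence the spread of $\mu_h$-weights on an irreducible factor-representation of highest weight $\lambda$ is the sum of the coordinates of $\lambda$ ($w_0=-1$ in type $C$). Since these spreads add over the factors of $\boxtimes_j U_j$ and $W\in\Rep(G)^{\AV}$ forces the total $\mu_h$-spread on $\tilde W$ to be at most $1$, at most one $U_j$ is nontrivial and that one is a standard representation. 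The remaining possibility — all $U_j$ trivial — would make $\tilde W$ a character of $G_{\bar\q}$, necessarily a power of $\nu$; but a $1$-dimensional piece of $W_{\bar\q}$ has Hodge type in $\{(-1,0),(0,-1)\}$ (weight $-1$ rules out $(0,0)$), incompatible with $\la\mu_h,\nu\ra=\la\bar\mu_h,\nu\ra\ne0$. So $\tilde W|_{(G^{\mathrm{der}})_{\bar\q}}$ is the standard representation of a single factor. Choosing a constituent $V_0$ of $V_{\bar\q}$ with the same $(G^{\mathrm{der}})_{\bar\q}$-restriction, $\tilde W$ and $V_0$ then differ by a character $\eta\in X^*(G_{\bar\q})=\z\cdot\nu$, and comparing $\mu_h$- and $\bar\mu_h$-weights — which lie in $\{0,1\}$ for both — forces $\la\mu_h,\eta\ra=\la\bar\mu_h,\eta\ra=0$, hence $\eta=0$ and $\tilde W\cong V_0$.

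I expect the third paragraph to be the main obstacle: controlling the $(G^{\mathrm{der}})_{\bar\q}$-restriction of an irreducible object of $\Rep(G)^{\AV}$ via the $\Sp$-weight combinatorics, and then handling the similitude character so that $\tilde W$ is matched with an actual constituent of $V$. It is precisely this last bookkeeping that fails outside the symplectic case, where $G$ acquires extra one-dimensional characters (cf.\ Example~\ref{admissibilitycounterexample}).
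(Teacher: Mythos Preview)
Your proof is correct and follows the same strategy as the paper's: pass to an algebraically closed field, restrict to $G_1=G^{\mathrm{der}}\cong\prod_i\Sp_{2g_i}$, and use a highest-weight computation (your $\mu_h$-spread argument is exactly the paper's observation that $(z\oplus\bar z)$-isotypicality under $h(U_1)\subset G_{1,\re}$ forces $\sum\lambda_i=1$) to see that the only irreducibles compatible with Hodge type $\{(-1,0),(0,-1)\}$ are standard representations of single symplectic factors. The one substantive difference is in how the centre is handled: the paper notes that $G_\co=G_{1,\co}\cdot(\text{scalars})$ and that scalars act in a way fixed by the weight, so restriction to $G_{1,\co}$ is fully faithful on $\Rep(G_\co)^\AV$ and no further matching is needed; you instead argue explicitly with $X^*(G_{\bar\q})=\z\cdot\nu$, rule out the trivial-restriction case, and compare $\mu_h$- and $\bar\mu_h$-weights to force the twist $\eta=\nu^k$ to vanish --- a slightly longer but equally valid route.
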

\begin{proof}
	It suffices to show the analogous statement after base change to $\co$. Let $W$ be a $\co$-representation of $G_\co$ of Hodge type $\{(-1,0),(0,-1)\}$. By Lemma \ref{PELsatisfySV5}, $G_{1,\co}\cong\prod_i \Sp_{2m_i}$. Accordingly, $W|_{G_{1,\co}}$ splits as a direct sum of irreducibles on which $G_{1,\co}$ acts via projection to some simple factor.
	\begin{claim}
		Let $T$ be an irreducible representation of $\Sp_{2n}$ that upon restriction to the subspace
		\[\S \supset U_1 \cong  \left\{ \left(\begin{array}{c | c}
		aI_g & -bI_g \\\hline bI_g & aI_g 
		\end{array}\right) \ \middle | \ a^2+b^2=1  \right\} \]
		$(z\op\bar z)$-isotypical. Then $T$ is isomorphic to the standard representation.
	\end{claim}
	\begin{proof}[Proof of Claim]
		Since $\Sp_{2n}$ is simply connected, it is equivalent to show the analogous statement for irreducible representations of $\mathfrak{sp}_{2n}$ that upon restriction to
		\[ \left(\begin{array}{c | c} 0& -bI_g \\\hline bI_g& 0 \end{array}  \right)   \]
		have weights $\{1,-1\}$. In the notation of \cite[Sec.\ 17.2]{FultonHarris}, these are precisely irreducible representations with highest weight $\lambda_1 L_1+...+ \lambda_n L_m$, for $\lambda_i$ integers with $\lambda_1\ge \lambda_2 \ge ...\ge \lambda_n \ge 0$ and for which $\sum \lambda_i =1$. As such, the only possible highest weight is $L_1$, which does indeed correspond to the standard representation.		
	\end{proof}
	From the proof of Lemma \ref{PELsatisfySV5}, the image of $\U_1$ under $h$ can be assumed to have the form given in the claim on each simple factor and since $W$ is of Hodge type $\{(-1,0),(0,-1)\}$, $W|_{h(\U_1)}$ is $(z\op\bar z)$-isotypical. By the claim, the irreducible factors of $W|_{G_{1,\co}}$ must be summands of the standard representations of the corresponding factor of $G_{1,\co}$, and therefore also of $V_\co$.
	
	Since the action of scalar matrices on $W$ is determined by its weight, the functor $\Rep(G_\co)^\AV \to \Rep(G_{1,\co})$ is faithful. In particular, there is at most one representation, up to isomorphism, of $G'_\co$ of Hodge type $\{(-1,0),(0,-1)\}$ restricting to any representation of $G_{1,\co}$. Since all irreducible representations of $W|_{G_{1,\co}}$ are summands of the standard representation of $G'_{1,\co}$ and the standard representation of $G_\co$ is one representation restricting to the standard representation of $G_{1,\co}$, we must have the all irreducible objects of $\Rep(G_\co)^\AV$ are direct summands of $V_\co$.
\end{proof}
Lemma \ref{AVsummand} does not hold in the orthogonal case, but is true upon restriction to $G^\circ$.

Under the assumptions of the lemma $\Anc_G$ extends $\mu_G^\mot$ up to natural isomorphism (see Remark \ref{Anconaextends}). We also find:
\begin{corollary}\label{symp/orth}
	Let $(G',\mathfrak{X}')$ be a Shimura datum with a choice of PEL-datum $(B',*',V',\la \ , \ \ra' , h')$ for which $B_\re$ only has factors of symplectic type (in the sense of Definition \ref{Bdecomp}). Then for any Shimura datum $(G,h)$ with a choice of PEL-datum, any map $f \colon (G',h')\to (G,h)$ is admissible (i.e.\ satisfies $(\star)$ of Definition \ref{admissibledef}).
\end{corollary}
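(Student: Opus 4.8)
The plan is to reduce the statement immediately to Lemma \ref{AVsummand}. The key observation is that the standard representation $V$ of the chosen PEL-datum on the target $(G,h)$ is an object of $\Rep(G)^\AV$: unwinding the construction of $G$ in Definition \ref{PEL}, the homomorphism $h\colon \S\to G_\re$ acts on $V_\re$ through a complex structure, so $V$ carries a Hodge structure of type $\{(-1,0),(0,-1)\}$ (this is exactly the feature that makes $\Sh_K(G,h)$ a moduli space of abelian varieties), and the Hodge type of a representation does not depend on the chosen point of $\mathfrak{X}$.

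First I would check that $f^*V$ again lies in $\Rep(G')^\AV$. Since $f\colon (G',h')\to(G,h)$ is a morphism of Shimura data, $f\circ h'$ is an element of $\mathfrak{X}$; hence the Hodge structure that $h'$ puts on $f^*V$ via $G'\overset{f}{\to}G\to\GL(V)$ is one of the Hodge structures that the points of $\mathfrak{X}$ put on $V$, so it has type contained in $\{(-1,0),(0,-1)\}$. Thus $f^*V\in\Rep(G')^\AV$.

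Next I would apply Lemma \ref{AVsummand} to the source datum $(G',\mathfrak{X}')$ equipped with its chosen PEL-datum $(B',*',V',\la\,,\,\ra',h')$, which by hypothesis has $B'_\re$ with only factors of symplectic type in the sense of Definition \ref{Bdecomp}. The lemma then says that every object of $\Rep(G')^\AV$, in particular $f^*V$, is a direct summand of $V'^{\op n}$ for some $n\in\n$; writing this as $f^*V\cong e\cdot V'^{\op n}$ for an idempotent $e\in\End_{\Rep(G')}(V'^{\op n})$ is precisely condition $(\star)$ of Definition \ref{admissibledef}. Hence $f$ is admissible, as claimed.

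The only step that requires any care is the verification that $f^*V$ remains of Hodge type $\{(-1,0),(0,-1)\}$, and even this is immediate once one unwinds the definition of a morphism of Shimura data; all the substantive work — the classification over $\co$ of $\AV$-representations of a product of symplectic groups — has already been carried out in Lemma \ref{AVsummand}. So there is no real obstacle here: the corollary is a repackaging of that lemma together with the observation that standard representations of PEL-data, and their pullbacks along morphisms of Shimura data, are $\AV$-representations.
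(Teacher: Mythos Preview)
Your proof is correct and is exactly the argument the paper intends: the corollary is stated without proof because it follows immediately from Lemma \ref{AVsummand} once one notes that $f^*V\in\Rep(G')^\AV$, and you have supplied precisely this verification. The only thing to add is that the paper's statement contains a typo ($B_\re$ should read $B'_\re$), which you have silently and correctly interpreted.
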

\bibliographystyle{amsalphainitials}
\bibliography{MyCollection}
\vspace{0.1cm}
\textsc{Department of Mathematics, University College London, Gower Street, London,\\ WC1E 6BT, UK}\\
\noindent \emph{E-mail address:}	\texttt{alex.torzewski@gmail.com}

\end{document}